\newdimen\AAdi%
\newbox\AAbo%
\def\AAk#1#2{\s_etbox\AAbo=\hbox{#2}\AAdi=\wd\AAbo\kern#1\AAdi{}}%
\def\AAr#1#2#3{\s_etbox\AAbo=\hbox{#2}\AAdi=\ht\AAbo\raise#1\AAdi\hbox{#3}}%
\font\tenmsb=msbm10 at 12pt \font\sevenmsb=msbm7 at 8pt
\font\fivemsb=msbm5 at 6pt
\def\Bbb#1{{\tenmsb\fam\msbfam#1}}
\newtheorem{thm}{Theorem}[section]
\newtheorem{lem}{Lemma}[section]
\newtheorem{cor}{Corollary}[section]
\newtheorem{pro}{Proposition}[section]
\newtheorem{defi}{Definition}[section]
\newcommand{\ba}{\begin{array}}
\newcommand{\ea}{\end{array}}
\newcommand{\Section}[2]{\setcounter{equation}{0}
\allowdisplaybreaks
\section[#1]{#2}}
\def\n{\nabla}
\def\ir#1{\mathbb R^{#1}}
\def\f#1#2{\frac{#1}{#2}}
\def\grs#1#2{\bold G_{#1,#2}}
\def\mc#1{\mathcal{#1}}
\def\td{\tilde}
\def\a{\alpha}
\def\be{\beta}
\def\p#1{\partial #1}
\def\de{\delta}
\def\De{\Delta}
\def\ep{\varepsilon}
\def\g{\gamma}
\def\k{\kappa}
\def\la{\lambda}
\def\om{\omega}
\def\Om{\Omega}
\def\th{\theta}
\def\Th{\Theta}
\def\si{\sigma}
\def\Si{\Sigma}
\def\Hess{\mbox{Hess}}
\def\R{\Bbb{R}}
\def\lan{\langle}
\def\ran{\rangle}
\def\ra{\rightarrow}
\def\aint#1{-\hskip -4.5mm\int_{#1}}
\def\V{\mbox{Vol}}
\def\ol{\overline}
\begin{document}
\title
[Harmonic maps into spheres] {The regularity of harmonic maps into
  spheres and applications to Bernstein problems}

\author
[J. Jost, Y. L. Xin and Ling Yang]{J. Jost, Y. L. Xin and Ling
Yang}
\address{Max Planck Institute for Mathematics in the
Sciences, Inselstr. 22, 04103 Leipzig, Germany.}
\email{jost@mis.mpg.de}
\address {Institute of Mathematics, Fudan University,
Shanghai 200433, China.} \email{ylxin@fudan.edu.cn}
\address{Max Planck Institute for Mathematics in the
Sciences, Inselstr. 22, 04103 Leipzig, Germany.}
\email{lingyang@mis.mpg.de}
\thanks{The second named author is grateful to the Max Planck
Institute for Mathematics in the Sciences in Leipzig for its hospitality and  continuous support.
He is also partially supported by NSFC
and SFMEC }
\begin{abstract}
We show the regularity of, and derive a-priori estimates for
(weakly) harmonic maps from a Riemannian manifold into a
Euclidean sphere under the assumption that the image avoids some
neighborhood of a half-equator. The proofs combine constructions
of strictly convex functions and the regularity theory of
quasi-linear elliptic systems.

We apply these results to the spherical and Euclidean Bernstein
problems for minimal hypersurfaces, obtaining new conditions
under which compact minimal hypersurfaces in spheres or complete
minimal hypersurfaces in Euclidean spaces are trivial.

\end{abstract}

\renewcommand{\subjclassname}{%
  \textup{2000} Mathematics Subject Classification}
\subjclass{58E20,53A10.}
\date{}
\maketitle
\Section{Introduction}{Introduction}

Harmonic maps into spheres need not be regular. The basic example is
due to \cite{h-k-w}: The map
\begin{equation}
\label{0}
\frac{x}{|x|}:\mathbb{R}^{n} \to S^{n-1}
\end{equation}
has a singularity at the origin 0, while having finite energy on
finite balls for
$n\ge 3$ (it thus is a so-called weakly harmonic map). This example can be modified by embedding $S^{n-1}$ as an
equator into $S^n$, and the composed map then is a singular harmonic
map from $\R^n$ to the sphere $S^n$ with image contained in an
equator. This equator is the boundary of a closed hemisphere. In
contrast to this phenomenon, Hildebrandt-Kaul-Widman \cite{h-k-w}
proved the regularity of weakly harmonic maps whose image is contained in
some compact subset of an open hemisphere. Hidlebrandt-Jost-Widman
\cite{h-j-w} then derived a-priori estimates for harmonic maps in that
situation. The example then shows that
these results are optimal in the sense that the open hemisphere cannot be
replaced by a closed one. It was then the general opinion that for
general harmonic maps (not necessarily energy minimizing, in which
case the method of \cite{u-s2} yields additional results, see for
instance \cite{so2}), this is the
best that one can do.

Here, we show that one can do substantially better. In fact, we shall
show that weakly harmonic maps into a sphere are regular, and satisfy
a-priori estimates, under the condition that their image be contained
in a compact subset of the complement of half of an equator, that is,
in the complement of half of a totally geodesic $(n-1)$-dimensional
subsphere. Of course, our condition still rules out the counterexample
of \cite{h-k-w}.

Our condition is presumably optimal, for the
following reason. The basic principle underlying the regularity theory
for harmonic maps is the fact that the composition of a harmonic map
with a convex function on the target yields a subharmonic
function, and in the case of weakly harmonic maps, we obtain a weakly
subharmonic function. One then exploits the maximum principle for such a
(weakly) subharmonic function, or in more refined
schemes, Moser's Harnack inequality to derive estimates for the
original (weakly) harmonic map. This obviously depends on the careful
utilization of the geometric properties of the convex function. In
fact, for the full regularity scheme, it is not sufficient to have a
single convex function, but we rather need a family of such convex
functions. More precisely, for each point in the target, we need a
strictly convex function that assumes its minimum at that particular
point. (In the original scheme of \cite{h-k-w,h-j-w}, the authors
worked with squared distance functions from points in the
target. Therefore, in an open hemisphere, one could only have such
functions that were strictly convex only on some part of that
hemisphere, depending on where their minimum was located. This
necessitated an iteration scheme whose idea was to show that the image
of a (weakly) harmonic map gets smaller in a controlled manner when
one decreases its domain. Some simplification can be achieved by the
construction of Kendall \cite{k1} of  strictly convex functions
on arbitrary compact subsets of an open hemisphere with a minimum at
some prescribed point.)

Thus, an essential part of the scheme developed in the present paper
consists in the construction of such strictly convex functions on
arbitrary compact subsets of the complement of a half-equator. This
is rather subtle. In fact, the functions we construct will depend on
the compact set $K$ in question, and none of them will be convex on
the entire open complement $\Bbb{V}$ of the half-equator. (In fact, there
is no $D\subset S^n$ that is a maximal domain of definition of a
strictly convex function, see \cite{a-b}.) We just fine-tune them in such a way that intersections of
their level sets with $K$ are convex, while these level sets are
allowed to be concave on $\Bbb{V}\setminus K$, so as to turn around the
boundary of the half-equator.

The reason why our result is presumably optimal then is that as soon
we enlarge the open set $\Bbb{V}$, it will contain a closed geodesic. Since
strictly convex functions are strictly monotonic along geodesic arcs,
a set containing a closed geodesic cannot carry a strictly convex
function. Therefore, our construction will no longer work then. Since
the presence of strictly convex functions is essentially necessary for
harmonic map regularity, this then seems to prohibit any general
regularity result, let alone an explicit estimate.

Still, even with those strictly convex functions, the regularity
theory is difficult and subtle, and we need to utilize  the most
advanced tools available in the literature. In particular, we use the
Green test function technique and image shrinking method employed in \cite{h-k-w,h-j-w} and the
generalization of that scheme in \cite{g-j}, the estimates for Green
functions of \cite{g-w,b-m} that depend on Moser's Harnack inequality
\cite{M}, the telescoping trick of \cite{g-g,g-h}, and the Harnack
inequality method of \cite{J} that converts convexity assumptions on
the target into energy and oscillation controls for harmonic maps. A crucial point is that
our estimates will not depend on the energy of the harmonic map to be
estimated. Therefore, in particular, we do not need to make any energy
minimizing assumption, and when we turn to global issues, we only need
to assume the map to have locally finite energy, but not necessarily globally.

Following the scheme of \cite{h-j-w}, we can therefore apply our a-priori
estimates to the Bernstein problem for minimal hypersurfaces in
spheres and Euclidean spaces. The connection between such Bernstein
problems and harmonic maps into spheres comes from the Ruh-Vilms
theorem \cite{r-v} that says that the Gauss map of a minimal
hypersurface is a {\it harmonic} map (with values in a
sphere). Showing that the original minimal hypersurface is trivial (a
totally geodesic subsphere or a hyperplane, resp.) then is reduced to
showing that the Gauss map is constant. In order to apply our results,
we therefore have to show that the Gauss map is constant under the
assumption that its image is contained in a compact subset of the
complement of a half-equator. In the case of the sphere,
where we are in interested in {\it compact} minimal hypersurfaces (the
spherical Bernstein problem introduced by Chern \cite{chern}),
this is easy: When we compose our harmonic Gauss map with a strictly
convex function, we obtain a subharmonic function which on our compact
hypersurface then has to be constant, implying that the Gauss map
itself is constant, as desired. In fact, these results can also be obtained by the method
of Solomon \cite{so2}. In the Euclidean case, where we are
interested in complete minimal hypersurfaces, this is more
difficult. There, we need very precise a-priori estimates that can be
translated into a Liouville type theorem by a scaling argument. For
that purpose, unfortunately, we need to impose some additional
restrictions on the geometry of our minimal hypersurface. In
particular, we need a condition on the volume growth of balls as a
function of their radii, and we need a Poincar\'e
inequality. Fortunately, however, these assumptions are known to be
satisfied in a number of important and interesting cases, but the
final answers do not yet seem to be known (we are grateful to Neshan
Wickramasekera for some useful information in this regard, including
some description of his still unpublished work).

Let us finally try to put our results into the perspective of the
Bernstein problem (our survey will be rather incomplete, however; see \cite{xin2} for a more detailed account). The original result of Bernstein that there is no other entire minimal
graph in $\R^3$, i.e., a minimal graph defined on the entire plane
$\R^2$, than an affine plane, has been extended by Simons \cite{Si} to such
entire minimal graphs in $\R^{n}$ for $n \le 7$ whereas Bombieri-de
Giorgi-Giusti \cite{b-g-g} constructed counterexamples in higher
dimensions. In fact, the Bernstein problem has been one of the central driving
forces of geometric measure theory which is concerned with area (or in
higher dimensions, volume) minimizing currents (see \cite{fe}). More generally, such Bernstein type results have been
obtained for complete stable minimal hypersurfaces, on the basis of curvature
estimates by Heinz \cite{hz} (in dimension 2), Schoen-Simon-Yau
\cite{s-s-y}, Simon \cite{sim1,sim2},
Ecker-Huisken \cite{e-h}, and others. Minimal graphs are automatically stable,
and so this approach applies to the original problem. Also, in contrast
to the counterexample of \cite{b-g-g}, Moser \cite{M} had shown that an entire
minimal graph in any dimension has to be affinely linear, provided its
slope is uniformly bounded. \cite{h-j-w} then introduced the method
of deriving Bernstein type theorems by showing that the Gauss map of a
minimal submanifold of $\R^n$ is constant, as explained above. In
particular, this method could generalize Moser's result. See also
\cite{so1} for a combination of the Gauss map with geometric measure
theory constructions. An important advantage of the method of
\cite{h-j-w} as compared to either the geometric measure theory
approach or the strategy of curvature estimates is that it naturally
extends to higher codimension, the only difference being that the
Gauss map now takes its values in a Grassmann variety whose geometry
is somewhat more complicated than the one of a sphere. Nevertheless,
the Gauss map is still harmonic by \cite{r-v}, and when one can derive
good enough a-priori estimates, one can again deduce a Liouville type
theorem and Bernstein type results, see \cite{h-j-w,j-x}. Therefore, the
strategy of the present paper can also be extended to higher
codimension, and we shall develop the necessary convex geometry of
Grassmannians in a sequel to this paper.

\small \parskip0.1mm\tableofcontents \normalsize\parskip3mm
\bigskip\bigskip

\Section{Convex functions and spherical Bernstein
problem}{Construction of convex functions and the spherical Bernstein problem}
\label{s1}

\subsection{Convex supporting sets}

Let $(M,g)$ be a smooth Riemannian manifold. A $C^2$-function $F$
is said to be strictly \textbf{convex} on an open subset $U$ of
$M$ if the Hessian form of $F$ is  positive definite at every
point of $U$, i.e.,
$$\Hess\ F(X,X)=\n_X\n_X F-(\n_X X)F>0 \qquad \mbox{for every nonzero }X\in TU.$$
(Here $\n$ denotes the Levi-Civita connection on $M$ induced by
$g$.) Equivalently, for any arc-length-parametrized
geodesic $\g$ lying in $U$, $F\circ \g$ is a strictly convex
function in the usual sense.

The notion of a \textbf{convex supporting} set was proposed in
\cite{Go}. A subset $U$ of $M$ is said to be convex supporting
if and only if any compact subset of $U$ has an open
neighborhood in $M$ on which there is defined a strictly convex
function $F$. For the sequel, it may be helpful to point out that this does not
require  $F$  be  defined  on all $U$, and in fact in the case we
shall be interested in below, there will be no strictly convex function on $U$.

A maximal open convex supporting set is one which is not
properly contained in any other open convex supporting set. Take
the ordinary 2-sphere equipped with the canonical metric as an
example. An open hemisphere is obviously a convex supporting set,
but it is not a maximal one. To obtain a maximal open convex
supporting domain on $S^2$, it suffices to remove half of a great
circle $\g$ joining north pole and south pole.  We will prove this
fact and its higher dimensional analogue and  construct a maximal open convex
supporting set on $S^n$ ($n\geq 2$) equipped with the canonical
metric.

\begin{lem}\label{l3}
Let $M$ be a Riemannian manifold, $A$ be a compact domain of $M$ and
$h$ be a non-negative $C^2$-function $|\n h|\neq 0$
everywhere on $A$. If there is a positive
constant $C$ such that
\begin{equation}\label{Hessh2}
\Hess\ h(Y,Y)\geq C|Y|^2
\end{equation}
for any $Y\in TA$ with $dh(Y)=0$, then there exists a positive constant $\la_0$,
only depending on $C$, $\sup_A |\Hess\ h|$ and $\inf_A |\n h|$, such
that
whenever $\la\geq \la_0$,
\begin{equation}\label{Hessh}
\Hess\ \big(\la^{-1}\exp(\la h)\big)(X,X)\geq \f{C}{2}|X|^2
\end{equation}
for any $X\in TA$.
\end{lem}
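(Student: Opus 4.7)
The plan is a direct computation: I will write $f_{\la}=\la^{-1}\exp(\la h)$, compute its Hessian explicitly, and then absorb the unfavorable cross terms into the favorable ones provided that $\la$ is chosen large enough in terms of the data $C$, $H:=\sup_A|\Hess\,h|$ and $m:=\inf_A|\n h|>0$.

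First I would record the elementary identity
\begin{equation*}
\Hess f_{\la}(X,X)=\exp(\la h)\bigl[\la\,(dh(X))^2+\Hess h(X,X)\bigr].
\end{equation*}
Since $h\ge 0$, the factor $\exp(\la h)\ge 1$, so it suffices to bound the bracket from below by $\tfrac{C}{2}|X|^2$. Next, at each point of $A$ I decompose orthogonally
\begin{equation*}
X=Y+\alpha N,\qquad N:=\f{\n h}{|\n h|},\qquad dh(Y)=0,\qquad \alpha=\f{dh(X)}{|\n h|},
\end{equation*}
so that $|X|^2=|Y|^2+\alpha^2$ and $(dh(X))^2=\alpha^2|\n h|^2\ge m^2\alpha^2$. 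Expanding the Hessian of $h$ on the pair $(X,X)$ gives
\begin{equation*}
\Hess h(X,X)=\Hess h(Y,Y)+2\alpha\,\Hess h(Y,N)+\alpha^2\Hess h(N,N),
\end{equation*}
and the hypothesis \eqref{Hessh2} controls the first term by $C|Y|^2$.

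The only obstacle is the cross term $2\alpha\,\Hess h(Y,N)$, which could be as negative as $-2H|\alpha||Y|$. I would absorb it by an AM--GM inequality of the form
\begin{equation*}
2H|\alpha||Y|\le \f{C}{2}|Y|^2+\f{2H^2}{C}\alpha^2,
\end{equation*}
and bound $|\alpha^2\Hess h(N,N)|\le H\alpha^2$. Collecting terms yields
\begin{equation*}
\la(dh(X))^2+\Hess h(X,X)\ge \f{C}{2}|Y|^2+\Bigl(\la m^2-H-\f{2H^2}{C}\Bigr)\alpha^2.
\end{equation*}
Finally I choose
\begin{equation*}
\la_0:=\f{1}{m^2}\Bigl(\f{C}{2}+H+\f{2H^2}{C}\Bigr),
\end{equation*}
which depends only on $C$, $H$ and $m$, and then for every $\la\ge\la_0$ the coefficient in front of $\alpha^2$ is at least $C/2$, giving the bracket $\ge\tfrac{C}{2}(|Y|^2+\alpha^2)=\tfrac{C}{2}|X|^2$. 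Multiplying back by $\exp(\la h)\ge 1$ yields \eqref{Hessh}. The only slightly subtle point is the handling of the cross term; everything else is bookkeeping.
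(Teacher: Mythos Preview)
Your proof is correct and follows essentially the same approach as the paper: both decompose $X$ orthogonally into a component along $\n h/|\n h|$ and a component in $\ker dh$, absorb the cross term in $\Hess h$ via the same AM--GM inequality, and then use $h\ge 0$ together with $\la(dh(X))^2\ge \la m^2\a^2$ to dominate the remaining negative term. Your resulting threshold $\la_0=m^{-2}\bigl(\tfrac{C}{2}+H+2H^2/C\bigr)$ coincides exactly with the paper's.
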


\begin{proof}

With $\nu:=\f{\n h}{|\n h|}$,  for any unit tangent
vector $X\in TA$, there exist $\a\in [-\f{\pi}{2},\f{\pi}{2}]$ and a
unit tangent vector $Y\in TA$ such that $dh(Y)=0$ and
$$X=\sin \a\ \nu+\cos\a\ Y.$$
With $c_0:=\sup_U |\Hess\ h|$, then
$$\Hess\ h(\nu,\nu)\geq -c_0\qquad \mbox{and }\big|\Hess\ h(\nu,Y)\big|\leq c_0.$$
Thereby
$$\aligned
\Hess\ h(X,X)&=\sin^2\a \Hess\ h(\nu,\nu)+2\sin\a\cos\a \Hess\ h(\nu,Y)+\cos^2\a\Hess\ h(Y,Y)\\
&\geq -c_0\sin^2\a-2c_0|\sin\a\cos\a|+C\cos^2\a\\
&\geq -c_0\sin^2\a-\f{C}{2}\cos^2\a-2c_0^2C^{-1}\sin^2\a+C\cos^2\a\\
&=\f{C}{2}\cos^2\a-(c_0+2c_0^2C^{-1})\sin^2\a.
\endaligned$$
Denote $c_1:=\inf_{U}|\n h|$ and take
$$\la_0=c_1^{-2}\Big(\f{C}{2}+c_0+2c_0^2C^{-1}\Big),$$
then
$$\aligned
\Hess\big(\la^{-1}\exp(\la h)\big)(X,X)&=\exp(\la h)(\Hess\ h+\la dh\otimes dh)(X,X)\\
&\geq \Hess\ h(X,X)+\la \big(dh(X)\big)^2\\
&\geq \f{C}{2}\cos^2\a-(c_0+2c_0^2C^{-1})\sin^2\a+\la|\n h|^2\sin^2\a\\
&\geq \f{C}{2}\cos^2\a-(c_0+2c_0^2C^{-1})\sin^2\a+\la c_1^2\sin^2\a\\
&\geq \f{C}{2}\endaligned$$ whenever $\la\geq \la_0$ and
(\ref{Hessh}) follows.
\end{proof}

\medskip

\subsection{Maximal convex supporting subsets of $S^n$}
We work on the standard Euclidean sphere $S^n \subset
\mathbb{R}^{n+1}$ with its metric $g$.\\
We consider  a closed half hemisphere of $S^n$ of codimension 1, that
is, half an equator,
\begin{equation}\label{S+}
\ol{S}^{n-1}_+:= \{(x_1,x_2,\cdots,x_{n+1})\in S^n: x_1=0,x_2\geq
0\}
\end{equation}
and put
\begin{equation}\label{V}
\Bbb{V}:=S^n\backslash \ol{S}^{n-1}_+.
\end{equation}
$\Bbb{V}$ is open and connected.
$\ol{S}^{n-1}_+$ consists of the geodesics joining
$x_0=(0,1,0,\cdots,0)\in \ol{S}^{n-1}_+$ with the points in
$$S^{n-2}=\{(x_1,x_2,\cdots,x_{n+1})\in S^n:
x_1=x_2=0\},$$ which is a totally geodesic submanifold of $S^n$
with codimension $2$. When $n=2,$
$S^{n-2}=S^0=\{(0,0,1),(0,0,-1)\}$ and hence $\ol{S}^{1}_+$ is simply
the shortest geodesic joining $(0,0,1)$(north pole) and
$(0,0,-1)$(south pole) passing through $x_0=(0,1,0)$.

We start with some simple and well-known computations and consider  the projection  $\pi$ from $S^n$ onto $\bar{\Bbb{D}}^2$ (2-dimensional closed unit disk):
$$\pi:\ S^n\ra \bar{\Bbb{D}}^2\qquad (x_1,\cdots,x_{n+1})\mapsto (x_1,x_2).$$
Then $x\in \Bbb{V}$ if and only if $\pi(x)$ is contained in the
domain  obtained by removing the radius connecting (0,0) and
(0,1) from the closed unit disk. Hence for any $x\in
\Bbb{V}$, there exist a unique $v\in (0,1]$ and a unique
$\varphi\in (0,2\pi)$ such that
\begin{equation}\label{angle}
\pi(x)=(v\sin \varphi, v\cos\varphi).
\end{equation}
$v$ and $\varphi$ can be considered as smooth functions on $\Bbb{V}$.

Put $y_0:=(1,0,\cdots,0)$ and let $\rho$ be the distance function from $y_0$, then by  spherical geometry,
$x_1=\cos\rho$. It is well-known that
\begin{equation} \label{Hess}
\Hess\ \rho=\cot\rho(g-d\rho\otimes d\rho);
\end{equation}
hence
\begin{equation}\label{Hessx}\aligned
\Hess\ x_1&=\Hess \cos\rho=-\sin\rho\ \Hess\ \rho-\cos\rho\ d\rho\otimes d\rho\\
&=-\cos\rho (g-d\rho\otimes d\rho)-\cos\rho\ d\rho\otimes d\rho\\
&=-\cos\rho\ g=-x_1\ g.
\endaligned
\end{equation}
Similarly
\begin{equation}\label{Hessx2}
\Hess\ x_2=-x_2\ g.
\end{equation}
By (\ref{angle}),
\begin{equation}\label{v}
v^2=x_1^2+x_2^2.
\end{equation}
and
\begin{equation}\label{dx}\aligned
dx_1&=\sin\varphi d v+v\cos\varphi d\varphi=\sin\varphi dv+x_2 d\varphi,\\
dx_2&=\cos\varphi d v-v\sin\varphi d\varphi=\cos\varphi dv-x_1 d\varphi.
\endaligned
\end{equation}
Combining with (\ref{v}) and (\ref{dx}) yields
\begin{equation*}\aligned
&2v\Hess\ v+2dv\otimes dv
=\Hess\ v^2=\Hess (x_1^2+x_2^2)\\
&\qquad =2x_1\Hess\ x_1+2x_2\Hess\ x_2+2dx_1\otimes dx_1+2dx_2\otimes dx_2\\
&\qquad=-2v^2\ g+2dv\otimes dv+2v^2d\varphi\otimes d\varphi,
\endaligned
\end{equation*}
which tells us
\begin{equation}\label{Hessv}
\Hess\ v=-v\ g+v\ d\varphi\otimes d\varphi.
\end{equation}
Furthermore, (\ref{Hessx}), (\ref{Hessv}) and (\ref{angle}) tell us
$$\aligned
-x_1\ g &=\Hess\ x_1\\
&=v\cos\varphi\Hess \varphi+\sin\varphi\Hess\ v-x_1d\varphi\otimes d\varphi+\cos\varphi(d\varphi\otimes dv+dv\otimes d\varphi)\\
&=v\cos\varphi\Hess \varphi-x_1\ g+x_1 d\varphi\otimes d\varphi-x_1d\varphi\otimes d\varphi+\cos\varphi(d\varphi\otimes dv+dv\otimes d\varphi)\\
&=v\cos\varphi\Hess \varphi-x_1\ g+\cos\varphi(d\varphi\otimes
dv+dv\otimes d\varphi),
\endaligned$$
i.e.
$$v\cos\varphi\Hess\ \varphi=-\cos\varphi(d\varphi\otimes
dv+dv\otimes d\varphi).$$
Similarly, we have
$$v\sin\varphi\Hess\ \varphi=-\sin\varphi(d\varphi\otimes
dv+dv\otimes d\varphi).$$ We then have
\begin{equation}\label{Hessphi}
\Hess\ \varphi=-v^{-1}(d\varphi\otimes dv+dv\otimes d\varphi).
\end{equation}

Let $K$ be a compact subset of $\Bbb{V}.$ Define a function
\begin{equation}
\phi=\varphi+f(v)
\end{equation}
on $K,$ where $f$ is to be chosen. A straightforward calculation
shows that
\begin{equation}\aligned
\Hess\ \phi&=\Hess\ \varphi+f'(v)\Hess\ v+f''(v)dv\otimes dv\\
           &=-vf'(v)g+vf'(v)d\varphi\otimes d\varphi+f''(v)dv\otimes dv\\
           &\quad -v^{-1}(d\varphi\otimes dv+dv\otimes d\varphi).
\endaligned
\end{equation}
Obviously $d\phi=d\varphi+f'(v)dv\neq 0$, and for every $X$ such that $d\phi(X)=0$, we have
$d\varphi(X)=-f'(v)dv(X)$ and furthermore
\begin{equation}\aligned
\Hess\ \phi(X,X)&=-vf'(v)\lan X,X\ran+vf'(v)d\varphi(X)^2+f''(v)dv(X)^2-2v^{-1}d\varphi(X)dv(X)\\
                &=-vf'(v)\lan X,X\ran+\big(vf'(v)^3+f''(v)+2v^{-1}f'(v)\big)dv(X)^2.
                \endaligned
\end{equation}
By the compactness of $K$, there exists a constant $c\in (0,1)$,
such that $v>c$ on $K$. Hence the function
\begin{equation}
f=\arcsin\big(\f{c}{v}\big)
\end{equation}
is well-defined on $K$. By a straightforward computation, we obtain
$$\aligned
f'(v)&=-cv^{-1}(v^2-c^2)^{-\f{1}{2}}<0\\
f''(v)&=cv^{-2}(v^2-c^2)^{-\f{1}{2}}+c(v^2-c^2)^{-\f{3}{2}}
\endaligned$$
and moreover
$$\aligned
vf'(v)^3&+f''(v)+2v^{-1}f'(v)\\
=&-c^3v^{-2}(v^2-c^2)^{-\f{3}{2}}+cv^{-2}(v^2-c^2)^{-\f{1}{2}}+c(v^2-c^2)^{-\f{3}{2}}\\
&-2cv^{-2}(v^2-c^2)^{-\f{1}{2}} =0.\endaligned$$ Therefore
$\Hess\ \phi(X,X)>0$ for every $d\phi(X)=0$ and $|X|=1$. The
compactness of $K$ implies that we can find a positive constant
$C$ satisfying (\ref{Hessh2}). Then by  Lemma \ref{l3} we can find
$\la$ large enough so that
\begin{equation}
F=\la^{-1}\exp(\la \phi)
\end{equation}
is strictly convex on $K$. Since $K$ is arbitrary, we conclude that $\Bbb{V}$ is a convex supporting subset of $S^n$.

\begin{thm}\label{t1}

$\Bbb{V}=S^n\backslash \ol{S}^{n-1}_+$ is a maximal open convex
supporting subset of $S^n$.

\end{thm}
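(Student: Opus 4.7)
The construction already established in the preceding subsection shows that $\Bbb{V}$ is convex supporting, so only maximality remains: every open $U\supsetneq\Bbb{V}$ must fail to be convex supporting. The plan is to exhibit, inside any such $U$, a closed geodesic of $S^n$, and then to invoke the well-known fact that no open neighborhood of a closed geodesic can carry a strictly convex $C^2$-function.

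Given $U\supsetneq\Bbb{V}$ open, choose $p\in U\cap\ol{S}^{n-1}_+$. If $p\in S^{n-2}$ (the boundary of the half-equator, where $p_1=p_2=0$), I would first perturb $p$ to a nearby point still lying in $U$ but with strictly positive second coordinate: openness of $U$ together with the fact that $\ol{S}^{n-1}_+\setminus S^{n-2}$ accumulates at every boundary point makes this possible. So I may assume $p=(0,p_2,\ldots,p_{n+1})$ with $p_2>0$.

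Next I consider the great circle through $y_0=(1,0,\ldots,0)$ and $p$, parametrized by $\gamma_p(t)=\cos t\,y_0+\sin t\,p$ for $t\in\Bbb{R}/2\pi\Bbb{Z}$. A direct check shows $\gamma_p(t)\in\ol{S}^{n-1}_+$ if and only if $\cos t=0$ and $p_2\sin t\geq 0$; since $p_2>0$, this forces $t=\pi/2$, which recovers $p$ itself. Thus $\gamma_p\setminus\{p\}\subset\Bbb{V}\subset U$ and, combining with $p\in U$, the entire closed geodesic $\gamma_p$ lies in $U$.

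To finish, I argue that the compact set $\gamma_p\subset U$ admits no strictly convex $C^2$-function $F$ on any open neighborhood $W\subset S^n$ of it. If such $F$ existed, then along the arc-length-parametrized $\gamma_p:\Bbb{R}/2\pi\Bbb{Z}\to W$ the composition $F\circ\gamma_p$ would have strictly positive second derivative everywhere; but the integral of $(F\circ\gamma_p)''$ over one period vanishes by periodicity of $(F\circ\gamma_p)'$, a contradiction. The only delicate point in the whole argument is the possibility $p\in S^{n-2}$, since the naive great circle through $y_0$ and such a $p$ would meet $\ol{S}^{n-1}_+$ at the two antipodal points $\pm p$ rather than one; the openness perturbation in the second paragraph handles this, and everything else reduces to elementary linear algebra in $\Bbb{R}^{n+1}$ together with the one-variable rigidity of strict convexity on $S^1$.
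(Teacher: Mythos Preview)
Your proof is correct and follows essentially the same approach as the paper: both arguments pick a point $p\in U\cap\ol{S}^{n-1}_+$ with $p_2>0$, observe that the great circle through $y_0=(1,0,\ldots,0)$ and $p$ lies entirely in $U$, and derive a contradiction from the existence of a strictly convex function on a neighborhood of a closed geodesic. The only cosmetic differences are that the paper parametrizes the geodesic starting at $p$ rather than at $y_0$, uses the maximum-point argument instead of integrating $(F\circ\gamma)''$, and handles the boundary case $p\in S^{n-2}$ contrapositively (showing no interior point of $\ol{S}^{n-1}_+$ can lie in $U$, then invoking openness) rather than by your direct perturbation.
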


\begin{proof}

It remains to show that $\Bbb{V}$ is maximal. Let $\Bbb{U}\supset
\Bbb{V}$ be another open convex supporting subset of $S^n$. If there
exist $\th\in (0,\f{\pi}{2}]$ and $y\in S^{n-2}$, such that
$(0,\sin\th,y\cos\th)\in \Bbb{U}$, then a closed geodesic of $S^n$
defined by
\begin{equation}
\g: t\in \R\mapsto (\sin t,\cos t\sin\th,y\cos t\cos\th)
\end{equation}
lies in $\Bbb{U}$. (It is easily-seen that $|\dot{\g}|=1$ and
$$d\big(\g(t_0),\g(t_0+t)\big)=\arccos \big(\g(t_0),\g(t_0+t)\big)=t$$
whenever $t\in [0,\pi]$, hence $\g$ is a geodesic.) Since $\Bbb{U}$
is convex supporting, there exist an open neighborhood $U$ of
$\mbox{Im}(\g)$ and a strictly convex function $F$ on $U$. Hence
$$\f{d^2}{dt^2}(F\circ \g)=\Hess\ F(\dot{\g},\dot{\g})>0.$$
But on the other hand, since $F\circ \g$ is periodic, $F\circ \g$
takes its maximum at some point $t_0\in [0,2\pi]$; at $t_0$,
$\f{d^2}{dt^2}(F\circ \g)\leq 0$; which causes a contradiction.
Therefore $(0,\sin\th,y\cos\th)\notin \Bbb{U}$ whenever $\th\in
(0,\f{\pi}{2}]$. The openness of $\Bbb{U}$ yields $(0,0,y)\notin
\Bbb{U}$ whenever $y\in S^{n-2}$. Hence $\Bbb{U}=\Bbb{V}$ and we
complete the proof.

\end{proof}

The following {\bf Remark} may be helpful for the geometric
intuition:
\begin{itemize}
\item The functions that we construct in general do not have convex
  level sets. Only the intersections of their level sets with the
  compact subset $K$ of  $\Bbb{V}$ under consideration have to be
  convex.  That is, their level sets may leave $K$, become concave
  outside $K$, then enter $K$ again as convex sets and leave it again
  as concave sets.
\end{itemize}

The following two {\bf Observations} will be useful below:
\begin{enumerate}
\item[2.1]
\label{r1}
Let $F$ be a convex function on an arbitrary
compact set $K\subset \Bbb{V}$, and $T$ be an isometry of $S^n$ onto
itself, then obviously $F\circ T^{-1}$ is a convex function on
$T(K)$. Therefore $\Bbb{U}=T(\Bbb{V})=S^n\backslash T(\ol{S}^{n-1}_+)$ is also a maximal
open convex supporting subset of $S^n$. Here $T(\ol{S}^{n-1}_+)$ can be characterized by
$$T(\ol{S}^{n-1}_+)=\{x\in S^n: (x,e_1)=0\mbox{ and }(x,e_2)\geq 0\},$$
where $e_1,e_2$ are two orthogonal vectors on $S^n$. In the
sequel, $\ol{S}^{n-1}_+$ will denote an arbitrary codimension
$1$ closed half hemisphere.

\item[2.2]\label{cor}

Denote $\Bbb{D}^m(r):=\{x\in \R^m: |x|<r\}$ and
$\Bbb{D}^m:=\Bbb{D}^m(1)$. Then we can define
\begin{eqnarray}
\nonumber
\chi:& (0,2\pi)\times
\Bbb{D}^{n-1}&\ra \Bbb{V}\\
\nonumber
&(\th,y)&\mapsto (\sqrt{1-|y|^2}\sin\th,\sqrt{1-|y|^2}\cos\th,y).
\end{eqnarray}
It is easy to check that $\chi$ is a diffeomorphism. Thus,
$\Bbb{V}$ is diffeomorphic to a convex subset of $\R^n$. This fact
will be
crucial in the estimates of the oscillation of weakly harmonic maps,
see Section \ref{s3}.
\end{enumerate}

\medskip
\subsection{Liouville type theorems for harmonic maps from compact  manifolds}

The following property of harmonic maps is well-known (see
e.g. \cite{J1}, Section 7.2.C).

\begin{lem}

Let $(M^m,g)$, $(N^n,h)$ be two Riemannian manifolds (not necessarily
complete) and $u$ be a harmonic map from $M$ to $N$. If on $N$ there exists a strictly convex function $F$, then $F\circ u$ is a
subharmonic function. Moreover, if there exists a positive constant
$K_0$ such that $\Hess\ F\geq K_0\ h$, then
\begin{equation}\label{la1}
\De (F\circ u)\geq K_0|du|^2.
\end{equation}
\end{lem}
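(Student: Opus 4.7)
The plan is to derive the standard composition formula for $\Delta(F\circ u)$ and then invoke harmonicity of $u$ together with the convexity hypothesis on $F$. Nothing in this statement is deep; the point is to keep track of the two terms arising from the second-order chain rule.

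First, I would work at a fixed point $p\in M$ and choose a local orthonormal frame $\{e_i\}_{i=1}^m$ on $M$ near $p$ with $\nabla_{e_i}e_j(p)=0$, so that the Laplacian acting on a function $\varphi$ at $p$ reduces to $\Delta\varphi(p)=\sum_i e_i(e_i(\varphi))$. Applying this to $\varphi=F\circ u$ and using the chain rule $e_i(F\circ u)=dF(du(e_i))$, together with the compatibility of the Levi-Civita connection on $N$ with $h$, one obtains the pointwise identity
\begin{equation*}
\Delta(F\circ u)=\sum_{i=1}^{m}\Hess\ F\bigl(du(e_i),du(e_i)\bigr)+dF(\tau(u)),
\end{equation*}
where $\tau(u):=\sum_i \bn_{du(e_i)}du(e_i)$ is the tension field of $u$ (here $\bn$ denotes the pull-back connection on $u^{-1}TN$). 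This is the standard composition formula; I would either derive it in one line using the normal-frame reduction above or simply cite it.

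Since $u$ is harmonic, $\tau(u)\equiv 0$ and the second term vanishes, leaving
\begin{equation*}
\Delta(F\circ u)=\sum_{i=1}^{m}\Hess\ F\bigl(du(e_i),du(e_i)\bigr).
\end{equation*}
Strict convexity of $F$ makes every summand nonnegative, so $F\circ u$ is subharmonic. Under the stronger quantitative bound $\Hess\ F\geq K_0\,h$, each term is bounded below by $K_0\,h(du(e_i),du(e_i))$, and summing over $i$ gives exactly $K_0|du|^2$ by the definition of the energy density, which proves \eqref{la1}.

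The only point that requires any care is the correct interpretation of $\Hess\ F$ as the second covariant derivative on $N$ (so that the formula above uses $\bn$ on $u^{-1}TN$ rather than some naive partial derivatives); once that is set up, no genuine obstacle remains, and the argument is a two-line application of the composition formula. For weakly harmonic $u$ the same identity holds in the distributional sense, so the subharmonicity statement also applies to the weakly harmonic maps relevant later in the paper.
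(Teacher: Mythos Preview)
Your argument is correct and is the standard proof via the composition formula; the paper itself does not give a proof of this lemma but simply declares it well-known with a reference to \cite{J1}. One small notational slip: since $\bn$ is the pull-back connection on $u^{-1}TN$ over $M$, the tension field should read $\tau(u)=\sum_i \bn_{e_i}\bigl(du(e_i)\bigr)$ (differentiation along $e_i\in T_pM$), not $\bn_{du(e_i)}du(e_i)$.
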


When $M$ is compact and $F$ is  a strictly convex function on $N$, then the compactness of
$u(M)$ enables us to find a constant $K_0 >0$ such that $\Hess\ F\geq
K_0h$ on $u(M)$. Therefore (\ref{la1}) holds. Integrating both sides
of (\ref{la1}) yields
$$0=\int_M \De(F\circ u)*1\geq K_0\int_M |du|^2*1=2K_0E(u).$$
Hence $E(u)=0$, i.e. $u$ is a constant map. We arrive at the
following Liouville-type theorem.

\begin{pro}\label{p1}(see \cite{Go})
Let $(M,g)$ be a compact Riemannian manifold, $(N,h)$ be a
Riemannian manifold and $u$ be a harmonic map from $M$ to $N$. If
the image of $u$ is contained in a convex supporting set of $N$,
then $u$ has to be a constant map.

\end{pro}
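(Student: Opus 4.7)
The plan is straightforward: argue directly from the definition of convex supporting, combined with integration of a pointwise differential inequality over the closed manifold $M$. Since $u$ is harmonic on compact $M$ it is in particular continuous, so $K:=u(M)\subset N$ is compact. The hypothesis that $u(M)$ lies in a convex supporting subset of $N$ then provides, by the definition recalled at the beginning of Section \ref{s1}, an open neighborhood $U\supset K$ and a strictly convex $C^2$-function $F$ on $U$.

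The next step is to upgrade the pointwise strict convexity of $F$ to a \emph{uniform} positive lower bound on its Hessian over the compact image. Because $\Hess F$ is a continuous, positive-definite symmetric $2$-form on the compact set $K$, a standard compactness argument (minimize $\Hess F(X,X)$ over unit tangent vectors $X$ to $N$ at points of $K$) produces a constant $K_0>0$ such that $\Hess F\geq K_0\, h$ pointwise on $K$. With this uniform bound in hand, the lemma immediately preceding the proposition applies and yields
\[
\De(F\circ u)\geq K_0\,|du|^2 \qquad \text{on } M.
\]

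Finally, I would integrate this inequality over the closed manifold $M$. Since $M$ has no boundary, the divergence theorem forces $\int_M \De(F\circ u)*1=0$, so $K_0\int_M |du|^2 *1 \leq 0$, and since $K_0>0$ this gives $|du|\equiv 0$; hence $u$ is constant. The whole proof is really just a formalization of the computation sketched in the paragraph preceding the proposition, so I do not anticipate a real obstacle: the only step warranting any care is the extraction of the uniform constant $K_0$ from pointwise strict convexity, and that is immediate from continuity of $\Hess F$ on the compact image $u(M)$.
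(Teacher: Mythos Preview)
Your proposal is correct and follows essentially the same route as the paper: use compactness of $u(M)$ to extract a uniform lower bound $K_0>0$ on $\Hess\ F$, apply the preceding lemma to obtain $\De(F\circ u)\geq K_0|du|^2$, and integrate over the closed manifold $M$ to conclude $E(u)=0$. You have spelled out a couple of points (continuity of $u$, the compactness argument for $K_0$) more explicitly than the paper does, but the argument is the same.
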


Theorem \ref{t1}, Proposition \ref{p1} and Observation 2.1 imply:

\begin{thm}
Let $(M,g)$ be a compact Riemannian manifold, $u$ be a harmonic
map from $M$ to $S^n.$ If $u(M)\subset S^n\backslash
\ol{S}^{n-1}_+,$ then $u$ has to be a constant map.

\end{thm}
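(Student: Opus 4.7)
The plan is to assemble the conclusion directly from the three ingredients that have just been proved: Theorem \ref{t1}, which identifies $\Bbb{V}=S^n\setminus\ol{S}^{n-1}_+$ as a (maximal) open convex supporting set; Proposition \ref{p1}, which says harmonic maps from a compact source into a convex supporting set are constant; and Observation 2.1, which allows us to move any half-equator to the standard one by an isometry of $S^n$. There is essentially no new analytic content to produce — the theorem is a clean corollary. The only point to be careful about is to make sure the hypotheses of Proposition \ref{p1} are actually verified for the image of $u$, since the definition of convex supporting demands a strictly convex function on a neighborhood of a compact subset, not on all of $\Bbb{V}$ (indeed no such global function exists on $\Bbb{V}$).

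First I would normalize the half-equator. Given an arbitrary codimension-$1$ closed half hemisphere $\ol{S}^{n-1}_+\subset S^n$, I would pick an isometry $T\in O(n+1)$ sending it to the standard half-equator (\ref{S+}), i.e.\ to $\{x\in S^n:x_1=0,\ x_2\geq 0\}$. Composition with $T$ produces a harmonic map $\tilde u:=T\circ u:M\to S^n$, and $u(M)\subset S^n\setminus\ol{S}^{n-1}_+$ is equivalent to $\tilde u(M)\subset \Bbb{V}$. By Observation 2.1, nothing is lost in replacing $u$ by $\tilde u$.

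Next I would use compactness: since $M$ is compact and $u$ (hence $\tilde u$) is continuous, $K:=\tilde u(M)$ is a compact subset of $\Bbb{V}$. By Theorem \ref{t1}, $\Bbb{V}$ is convex supporting, so by the very definition of this notion, there is an open neighborhood $U\subset S^n$ of $K$ and a strictly convex $C^2$-function $F:U\to\R$. After a slight shrinking of $U$ so that $\overline{U}$ is still compact and contained in the domain of $F$, one has $\Hess F\geq K_0\, g$ on $\tilde u(M)$ for some $K_0>0$ by continuity and compactness.

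Finally, Proposition \ref{p1} applies verbatim to $\tilde u:M\to U$, since $\tilde u$ is harmonic, $M$ is compact, and $U$ carries the strictly convex function $F$. It therefore forces $\tilde u$, and hence $u$, to be constant. The main ``obstacle'' is thus not really an obstacle at all — it is simply the bookkeeping of reducing from an arbitrary half-equator to the standard one and checking that the image, being compact, fits into the compact-subset clause in the definition of convex supporting.
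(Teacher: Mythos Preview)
Your proof is correct and follows exactly the route the paper takes: the theorem is stated as an immediate consequence of Theorem~\ref{t1}, Proposition~\ref{p1}, and Observation~2.1, and you have simply spelled out the (routine) details of how these three pieces fit together.
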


\medskip

\subsection{A spherical Bernstein theorem}

S. S. Chern \cite{chern} has raised the
spherical Bernstein conjecture: Is any imbedded
minimal  $(n-1)$-dimensional sphere in $S^n$ an equator? For
$n=3,$ this was affirmatively solved by Almgren \cite{a} and Calabi
\cite{ca}. The answer is negative in higher dimensions, however, by counterexamples due to Hsiang \cite{hs}. Since then, the spherical
Bernstein problem is understood as the question under what  conditions a compact
minimal hypersurface in $S^n$ has to be an equator. An important
result of Solomon
\cite{so1}  concerns this problem for compact minimal hypersurfaces with
vanishing first Betti number.

We now study this problem for compact minimal hypersurfaces of arbitrary
topological type.

Let $M\ra S^{m+p}\subset \R^{m+p+1}$ be an $m$-dimensional
submanifold in the sphere. For  $x\in M$, by
parallel translation in $\R^{m+p+1}$, we can move the normal space $N_x M$ of
$M$ in $S^{m+p}$  to the origin of $\R^{m+p+1}$. Thereby
we get a $p$-subspace of $\R^{m+p+1}$. This defines the
\textbf{normal Gauss map} $\g: M\ra \grs{p}{m+1}$.
Here $\grs{p}{m+1}$ is the Grassmannian manifold
of  $p$-subspaces of $\R^{m+p+1}$.
When $p=1$, $\grs{p}{m+1}$ is simply the $(m+1)$-dimensional sphere.

There is a natural isometry $\eta$ between $\grs{p}{m+1}$ and
$\grs{m+1}{p}$ which maps any $p$-subspace into its orthogonal
complementary $(m+1)$-subspace. The map $\g^*=\eta\circ \g$ maps
any point $x\in M$ into the $(m+1)$-subspace consisting of the
tangent space of $M$ at $x$ and the position vector of $x$.

As pointed out and utilized by J. Simons \cite{Si}, the properties of
the (minimal) submanifold $M$ in the
sphere are closely related to those of the cone $CM$ generated by
$M$. This cone is the
image under the map from $M\times [0,\infty)$ into $\R^{m+p+1}$
defined by $(x,t)\mapsto tx$, where $t\in [0,\infty)$ and $x\in M$.
$CM$ has a singularity $t=0$. To avoid the singularity at the origin, we consider
the truncated cone $CM_\ep$, which is the image of
$M\times(\ep,\infty)$ under the same map, for $\ep >0$. We have

\begin{pro}\label{p2}(\cite{xin1} p.64)
$CM_\ep$ has parallel mean curvature in $\R^{m+p+1}$ if and only if
$M$ is a minimal submanifold in $S^{m+p}$.
\end{pro}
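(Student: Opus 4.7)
The plan is to compute the second fundamental form, and then the normal covariant derivative of the mean curvature vector, of the cone $CM_\ep$ in $\R^{m+p+1}$ directly, by using the natural parametrization $F\colon M\times(\ep,\infty)\to \R^{m+p+1}$, $F(x,t)=tx$. First I set up the geometry: at a point $tx\in CM_\ep$, the tangent space of the cone is spanned by the radial direction $\partial_r=x$ together with the image of $T_xM$ under Euclidean parallel translation; since $T_xM\subset T_xS^{m+p}\perp x$, the frame $\{\partial_r,E_1,\dots,E_m\}$, where $E_i$ is the Euclidean parallel transport of an orthonormal frame $\{e_i\}$ of $M\subset S^{m+p}$, is orthonormal on $CM_\ep$. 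A dimension count shows that the normal bundle of $CM_\ep$ at $tx$ is exactly (the Euclidean parallel translate of) the normal space $N_xM$ of $M$ inside $S^{m+p}$.

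Next I compute $B^{\text{cone}}$ via the standard decomposition of the flat connection $\bar\n$. Along a radial ray, $\partial_r$ and each $E_i$ are constant as Euclidean vectors, so $\bar\n_{\partial_r}\partial_r=0$ and $\bar\n_{\partial_r}E_i=0$, giving $B^{\text{cone}}(\partial_r,\partial_r)=0$ and $B^{\text{cone}}(\partial_r,E_i)=0$. For the remaining block, a unit-speed curve on the cone in direction $E_i$ is $\g(s)=tx(s/t)$ with $x(\cdot)\subset S^{m+p}$, so
\[
\bar\n_{E_i}E_j=\tfrac1t\,\bar\n_{e_i}e_j=\tfrac1t\bigl(\n^M_{e_i}e_j+B^M(e_i,e_j)-\de_{ij}x\bigr),
\]
using the two-step decomposition through $M\subset S^{m+p}\subset \R^{m+p+1}$. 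The terms $\n^M_{e_i}e_j$ and $\de_{ij}x=\de_{ij}\partial_r$ are tangent to the cone, so the normal component gives $B^{\text{cone}}(E_i,E_j)=\frac1t B^M(e_i,e_j)$. Tracing, the mean curvature vector of the cone satisfies
\[
H^{\text{cone}}(tx)=\sum_iB^{\text{cone}}(E_i,E_i)+B^{\text{cone}}(\partial_r,\partial_r)=\tfrac1t\,H^M(x),
\]
regarded as an element of $N_xM$ parallel translated to $tx$.

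From here the equivalence is immediate. If $M$ is minimal in $S^{m+p}$ then $H^M\equiv 0$, hence $H^{\text{cone}}\equiv 0$, which is trivially parallel in the normal bundle of the cone. Conversely, assume $H^{\text{cone}}$ is parallel. Using that $\{E_i,\partial_r\}$ diagonalises things and that $H^M(x)\in N_xM$ is exactly a normal vector field along the cone (independent of $t$), I differentiate $H^{\text{cone}}(tx)=t^{-1}H^M(x)$ radially:
\[
\n^{\perp}_{\partial_r}H^{\text{cone}}\bigr|_{tx}=\bigl(\bar\n_{\partial_r}(t^{-1}H^M)\bigr)^{\perp}=-\tfrac{1}{t^{2}}\,H^M(x),
\]
since $H^M(x)$ is already normal to the cone. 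Parallelism forces the right-hand side to vanish for every $x$ and $t$, so $H^M\equiv 0$ and $M$ is minimal in $S^{m+p}$.

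The only real subtlety is bookkeeping: one must keep track of the fact that the cone metric is $dt^2+t^2g_M$ (so the orthonormal frame scales), and that the normal space of the cone at $tx$ is canonically identified with $N_xM$ via Euclidean parallel transport, which lets one differentiate $H^{\text{cone}}$ radially without having to parametrize the normal bundle. Once that is set up, the radial vanishing of $B^{\text{cone}}$ and the $1/t$ scaling drive the whole argument, and the equivalence falls out with no further work.
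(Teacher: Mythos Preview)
The paper does not actually supply a proof of this proposition; it is quoted directly from \cite{xin1}, p.~64, and used as a black box. Your argument is correct and is in fact the standard computation one finds in that reference: set up the orthonormal frame $\{\partial_r,E_1,\dots,E_m\}$ on the cone, identify the normal bundle of $CM_\ep$ at $tx$ with $N_xM$, compute $B^{\text{cone}}$ block by block via the two-step Gauss decomposition $M\subset S^{m+p}\subset\R^{m+p+1}$, and read off $H^{\text{cone}}(tx)=t^{-1}H^M(x)$. The converse via $\nabla^\perp_{\partial_r}H^{\text{cone}}=-t^{-2}H^M(x)$ is the clean way to close the argument, and your bookkeeping of the scaling (the cone metric being $dt^2+t^2g_M$, so that $E_i$, not $t\,e_i$, is the unit vector) is handled correctly. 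There is nothing to compare against in the paper itself.
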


There is a natural map from $\R^{m+p+1}-\{0\}$ to $S^{m+p}$ defined
by
$$\psi(x)=\f{x}{|x|}\qquad x\in \R^{m+p+1}-\{0\}.$$
Hence for a map $f_1$ from a submanifold $M\subset S^{m+p}$
into a Riemannian manifold $N$, we obtain a map $f$ from
$CM_\ep$ into $N$ defined by $f=f_1\circ \psi$, which is called the
\textbf{cone-like map} (see \cite{xin1} p.66). One computes that $f_1$ is  harmonic  if and
only if $f$ is  harmonic (see \cite{xin1} p.67).

By the definition, it is clear that the Gauss map $\g_c:
CM_\ep\ra \grs{m+1}{p}$ $x\mapsto T_x (CM_\ep)$ is a cone-like
map. We have already defined the normal Gauss map $\g: M\ra
\grs{p}{m+1}$ and $\g^*=\eta\circ\g: M\ra \grs{m+1}{p}$, where
$\eta: \grs{p}{m+1}\ra \grs{m+1}{p}$ is an isometry. Obviously
\begin{equation}
\g_c=\g^*\circ\psi.
\end{equation}
The well-known Ruh-Vilms Theorem (see \cite{r-v}) tells us that $CM_\ep$
has parallel mean curvature if and only if the Gauss map $\g_c$ is a
harmonic map, which holds if and only if the normal Gauss map $\g$
is a harmonic map. In conjunction with Proposition \ref{p2} we have

\begin{pro}(\cite{ch} \cite{is} \cite{xin1}p.67)\label{p3}
$M$ is a minimal submanifold in the sphere if and only if its normal
Gauss map $\g: M\ra \grs{p}{m+1}$ is a harmonic map.
\end{pro}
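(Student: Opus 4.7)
The plan is to assemble the equivalence as a short chain of biconditionals from the three ingredients already laid out in the excerpt; no real obstacle is anticipated, since the proof is essentially bookkeeping, and the only thing to verify carefully is that the various Gauss-map identifications line up.

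First I would invoke Proposition \ref{p2}: $M$ is minimal in $S^{m+p}$ if and only if the truncated cone $CM_\ep$ has parallel mean curvature in $\R^{m+p+1}$ for every $\ep>0$. This reduces the spherical problem to a Euclidean one on a smooth (boundaryless, away from the tip) submanifold.

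Next I would apply the Ruh--Vilms theorem \cite{r-v} to the smooth submanifold $CM_\ep\subset \R^{m+p+1}$: its tangential Gauss map $\g_c:CM_\ep\to \grs{m+1}{p}$, $x\mapsto T_x(CM_\ep)$, is harmonic if and only if $CM_\ep$ has parallel mean curvature. Then I would exploit the cone-like factorization $\g_c=\g^*\circ\psi$ recorded above: since a cone-like map from $CM_\ep$ into a Riemannian target is harmonic precisely when its base map on $M$ is harmonic, $\g_c$ is harmonic iff $\g^*$ is harmonic. Finally, because $\eta:\grs{p}{m+1}\to \grs{m+1}{p}$ is a Riemannian isometry, post-composition with $\eta$ preserves harmonicity, so $\g^*=\eta\circ\g$ is harmonic iff $\g$ is. Concatenating the three equivalences yields the proposition.

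The main point I would treat with care---more a matter of identification than a real difficulty---is checking that $\g_c$ as defined really agrees with $\g^*\circ\psi$: that is, at $x\in CM_\ep$ the $(m+1)$-plane $T_x(CM_\ep)\subset \R^{m+p+1}$ is obtained by adjoining the position vector $\psi(x)$ to $T_{\psi(x)}M$, so that $\g_c(x)=\g^*(\psi(x))$. Once this is in hand, and once $\eta$ is confirmed to be a Grassmannian isometry with respect to the standard metrics (so harmonicity transfers verbatim through it), the chain of biconditionals closes and the proposition follows.
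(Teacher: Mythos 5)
Your proposal is correct and follows essentially the same chain of equivalences as the paper: Proposition \ref{p2} to pass from minimality in $S^{m+p}$ to parallel mean curvature of the truncated cone, Ruh--Vilms to equate that with harmonicity of $\g_c$, the cone-like map property to descend from $\g_c=\g^*\circ\psi$ on $CM_\ep$ to $\g^*$ on $M$, and the isometry $\eta$ to pass from $\g^*$ to $\g$. The extra care you flag about verifying $\g_c=\g^*\circ\psi$ is reasonable, but the paper treats it as immediate from the definitions, so there is no difference in substance.
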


Combining Proposition \ref{p3} and Theorem \ref{t1}, we obtain the
following spherical Bernstein theorem:

\begin{thm}\label{t2}
Let $M$ be a compact minimal hypersurface in $S^n.$ If the image
under the normal Gauss map omits $\ol{S}^{n-1}_+,$  then $M$ has
to be an equator.
\end{thm}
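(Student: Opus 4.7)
The proof chains together the machinery that has been assembled in the excerpt, so the plan is essentially to verify that the hypotheses of each earlier result are satisfied and then deduce the geometric consequence.

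First, I would invoke Proposition \ref{p3} in the codimension one case. Since $M$ is an $(n-1)$-dimensional minimal hypersurface in $S^n$, Proposition \ref{p3} (specialized to $p=1$, $m=n-1$, $m+p+1=n+1$) gives that the normal Gauss map $\g: M\to \grs{1}{n}=S^n$ is harmonic. The hypothesis that the image of $\g$ omits $\ol{S}^{n-1}_+$ means exactly that $\g(M)\subset \Bbb{V}=S^n\setminus \ol{S}^{n-1}_+$. Since $M$ is compact, $\g(M)$ is a compact subset of $\Bbb{V}$.

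Next, I would appeal to the Liouville-type result. By Theorem \ref{t1} and Observation 2.1, $\Bbb{V}$ is a (maximal) open convex supporting subset of $S^n$, regardless of which particular closed half-equator $\ol{S}^{n-1}_+$ is removed. Hence the theorem just preceding the statement (itself a direct corollary of Proposition \ref{p1} and Theorem \ref{t1}) applies to the compact manifold $M$ and the harmonic map $\g$, and yields that $\g$ is constant.

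The only remaining content is the purely geometric step of translating a constant normal Gauss map into the assertion that $M$ is an equator. By construction $\g(x)$ is the line spanned by a unit normal $\nu(x)\in T_x S^n\subset \R^{n+1}$, so $\g$ constant means there exists a fixed unit vector $e\in \R^{n+1}$ with $\nu(x)\in \R e$ for every $x\in M$. Because $\nu(x)\in T_x S^n$ is orthogonal to the position vector $x$, we get $\lan x,e\ran=0$ for all $x\in M$, so $M\subset \{e\}^{\perp}\cap S^n=S^{n-1}$. Since $M$ is a compact $(n-1)$-dimensional submanifold of the connected $(n-1)$-dimensional equator $S^{n-1}$, it must coincide with the whole equator, which completes the proof.

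Essentially nothing here is an obstacle; the conceptual difficulty was already absorbed into Theorem \ref{t1} (the existence of strictly convex functions on arbitrary compact subsets of $\Bbb{V}$) and into Proposition \ref{p3} (the Ruh-Vilms identification). The only point worth pausing on is the last step, where one must be careful that a constant normal direction in $S^n$ really does force $M$ to sit inside a linear hyperplane through the origin; but this follows at once from the orthogonality $\nu(x)\perp x$ inside $\R^{n+1}$.
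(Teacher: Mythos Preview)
Your proposal is correct and follows exactly the route the paper takes: the paper's proof is simply the sentence ``Combining Proposition \ref{p3} and Theorem \ref{t1}, we obtain the following spherical Bernstein theorem,'' and you have unpacked this correctly, including the implicit appeal to Proposition \ref{p1} (via the theorem immediately preceding Theorem \ref{t2}) and the final geometric step that a constant normal forces $M$ into a great $(n-1)$-sphere.
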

\noindent
{\bf Remarks:}
\begin{itemize}
\item
Theorem \ref{t2} is  an improvement of Simons' extrinsic
rigidity theorem (see \cite{Si}).
\item
Solomon  \cite{so1} (see
also \cite{so2}) showed that under the additional assumption that the
first Betti number of $M$ vanishes, such a spherical Bernstein already
holds when the Gauss image omits a neighborhood of some totally
geodesic $S^{n-2}$. Without that topological assumption, however, there are easy
counterexamples, like the Clifford torus, and its higher dimensional
analogues, as described in \cite{so1}. In fact, Theorem
\ref{t2} can also be obtained by Solomon's method in \cite{so2}.
\end{itemize}
\bigskip\bigskip

\Section{Construction of a smooth family of convex functions on $S^n$}
{Construction of a smooth family of convex functions on $S^n$}
So far, we have constructed and utilized a single convex function on
the target of our Gauss maps in the sphere. For the general regularity
theory for harmonic maps that we now wish to develop and later utilize
for Bernstein type theorems, we need suitable families of convex
functions. Therefore, we need to refine and extend our preceding construction.

On $\R^n$, the squared distances from the points $x\in \R^n$
constitute a smooth family of strictly convex functions, or
expressed differently, for every $x\in \R^n$, we have a strictly
convex function that assumes its minimum at $x$. In this vein, we
now wish to construct a smooth family of strictly convex
functions on arbitrary compact set $K\subset \Bbb{V}$,
sufficiently many points in $K$ occur as the minimal points of
corresponding convex functions. To realize this, we need the
following lemmas concerning the relationship between convex
hypersurfaces and convex functions.

\medskip
\subsection{Convex functions and convex hypersurfaces}
For later reference, we recall some elementary facts.
\begin{defi}
Let $N$
be a hypersurface in the  Riemannian manifold $(M,g)$. If there is a unit normal vector field
$\nu$ on $N$ with $\lan B(X,X),\nu\ran<0$ for any
nonzero $X\in TN$ (where $B$ denotes the second fundamental form
of $N$), then we call $N$  a \textbf{convex hypersurface}, and
the direction in which $\nu$ points is called the
\textbf{direction of convexity}.

\end{defi}
The following result is well known and easy to verify:
\begin{lem}\label{l1}
Let   $\phi$
be a $C^2$-function on the  Riemannian manifold $(M,g)$ and $N=\{x\in M: \phi(x)=c\}$  a level
set of $\phi$. If $|\n\phi|\neq 0$ on $N$, then
$$\Hess\ \phi(X,X)>0\qquad \mbox{for every nonzero }X\in TN$$
if and only if $N$ is a convex hypersurface
 and the direction of convexity is the direction of increasing
$\phi$.

\end{lem}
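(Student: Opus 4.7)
The plan is to translate the Hessian condition on $TN$ directly into a statement about the second fundamental form of $N$ by using the defining property of $N$ as a level set. The unit normal vector along $N$ is forced by the hypothesis $|\n\phi|\neq 0$ to be $\nu:=\n\phi/|\n\phi|$ (up to sign), and I will take this sign, so that $\nu$ points in the direction of increasing $\phi$. The equivalence to be established is then a pointwise identity on $TN$, so I would fix $x\in N$ and work entirely in $T_xN$.

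The key computation is the following. For $X\in T_xN$, extend $X$ to a tangent vector field to $N$ near $x$. Since $\phi\equiv c$ along $N$, we have $X\phi\equiv 0$ along $N$, hence $X(X\phi)=0$ at $x$. Therefore
\[
\Hess\ \phi(X,X)=X(X\phi)-(\n_XX)\phi=-\lan \n_XX,\n\phi\ran.
\]
Now decompose $\n_XX$ into its tangential and normal components along $N$; the normal component is precisely $B(X,X)$, while the tangential component is $g$-orthogonal to $\n\phi$. Consequently
\[
\Hess\ \phi(X,X)=-\lan B(X,X),\n\phi\ran=-|\n\phi|\,\lan B(X,X),\nu\ran.
\]
Since $|\n\phi|>0$ on $N$, the sign of $\Hess\ \phi(X,X)$ is opposite to that of $\lan B(X,X),\nu\ran$, which immediately yields both implications: positivity of $\Hess\ \phi$ on $TN$ is equivalent to $\lan B(X,X),\nu\ran<0$ for every nonzero $X\in TN$, i.e.\ to $N$ being a convex hypersurface with direction of convexity $\nu=\n\phi/|\n\phi|$, the direction of increasing $\phi$.

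There is no real obstacle here; the only point requiring a little care is the extension of a tangent vector $X\in T_xN$ to a vector field tangent to $N$ so that the identity $X\phi\equiv 0$ can be differentiated, but this is a standard device and the final formula $\Hess\ \phi(X,X)=-|\n\phi|\lan B(X,X),\nu\ran$ is independent of the chosen extension, as it depends only on the $2$-jet of $\phi$ and the value of $X$ at $x$.
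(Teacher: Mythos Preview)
Your proof is correct. The paper does not actually supply a proof of this lemma; it simply states that the result ``is well known and easy to verify.'' Your computation $\Hess\ \phi(X,X)=-|\n\phi|\,\lan B(X,X),\nu\ran$ for $X\in TN$ and $\nu=\n\phi/|\n\phi|$ is exactly the standard verification one would give, and it matches the conventions (in particular the sign convention for the direction of convexity) adopted in the paper.
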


Combining  Lemma \ref{l1} with Lemma \ref{l3}, we  obtain

\begin{lem}
Let $A$ be a compact domain in $M$.
If $\phi$ is a nonnegative function on $A$, every level set of $\phi$ is
a convex hypersurface, and the direction of convexity is the direction of
increasing $\phi$, then there exists $\la>0$ such that $\la^{-1}\exp(\la\phi)$
is convex on $A$.
\end{lem}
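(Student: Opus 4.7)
The plan is to reduce the statement directly to Lemma \ref{l3} by translating the hypothesis on convex level sets into the Hessian inequality (\ref{Hessh2}). There is really nothing new to do beyond packaging the two previous lemmas.

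First I would check that the hypothesis that every level set of $\phi$ is a (smooth) convex hypersurface implicitly gives $|\n \phi|\ne 0$ everywhere on $A$: level sets can only be genuine hypersurfaces, with a well-defined unit normal $\nu$ (the one agreeing with the direction of convexity, i.e., the direction of increasing $\phi$), at regular points of $\phi$. Combined with the compactness of $A$, this also yields $\inf_A |\n \phi|>0$ and $\sup_A|\Hess\ \phi|<\infty$, which are precisely the two finite quantities that enter the constant $\la_0$ in Lemma \ref{l3}.

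Next I would apply Lemma \ref{l1} at each point $x\in A$ to the level set $N_x=\{\phi=\phi(x)\}$. The conclusion is that
\[
\Hess\ \phi(Y,Y)>0\qquad \text{for every nonzero }Y\in T_xN_x,
\]
and since $T_xN_x$ is exactly the kernel of $d\phi$ at $x$, this says $\Hess\ \phi(Y,Y)>0$ whenever $d\phi(Y)=0$ and $Y\ne 0$. A standard compactness argument on the unit sphere bundle of $\ker d\phi\subset TA$ (which is a compact subset of $TA$, since $A$ is compact and $d\phi$ has constant rank one) then upgrades this pointwise positivity to a uniform estimate: there exists $C>0$ such that
\[
\Hess\ \phi(Y,Y)\geq C|Y|^2\qquad \text{for every }Y\in TA\text{ with }d\phi(Y)=0.
\]
This is hypothesis (\ref{Hessh2}) of Lemma \ref{l3}.

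Finally, I would invoke Lemma \ref{l3} with $h=\phi$ (which is nonnegative by assumption) to produce $\la_0>0$, depending only on $C$, $\sup_A|\Hess\ \phi|$ and $\inf_A|\n\phi|$, such that for every $\la\ge \la_0$ the function $\la^{-1}\exp(\la\phi)$ has $\Hess\ \ge \tfrac{C}{2}\,g$ on all of $TA$, and in particular is strictly convex on $A$. The only mildly delicate step is the uniform lower bound on $\Hess\ \phi$ along $\ker d\phi$; everything else is bookkeeping, and the exponential trick that turns a ``conditional'' Hessian estimate into an ``unconditional'' one is exactly what Lemma \ref{l3} was set up to handle.
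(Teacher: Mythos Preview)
Your proof is correct and follows exactly the route the paper indicates: the paper simply states that the lemma is obtained by ``combining Lemma \ref{l1} with Lemma \ref{l3}'', and you have spelled out precisely how that combination works, including the compactness argument needed to upgrade the pointwise positivity from Lemma \ref{l1} to the uniform lower bound required in Lemma \ref{l3}.
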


The following result is again well known and easy to prove:
\begin{lem}\label{l2}
For  $x_0 \in S^n$
and $c \in (0,1)$,
 the hypersurface
$$N_{x_0,c}=\{x\in S^n: (x,x_0)=c\}$$
where $(.,.)$ denotes the Euclidean scalar product,
is convex, and the direction of convexity is the direction of
decreasing $(x,x_0)$.

\end{lem}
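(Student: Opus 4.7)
The plan is to reduce the statement to a direct application of Lemma \ref{l1} applied to the ambient linear function $\phi(x):=(x,x_0)$, whose level sets on $S^n$ are precisely the hypersurfaces $N_{x_0,c}$. The key computation is the Hessian of $\phi$ restricted to $S^n$, which I would carry out in essentially the same way as the computation of $\Hess\ x_1$ in equation (\ref{Hessx}) of the paper.

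First, I would note that $\phi(x)=(x,x_0)=\cos\rho_{x_0}(x)$, where $\rho_{x_0}$ denotes the spherical distance from $x_0$. Using the standard formula $\Hess\ \rho_{x_0}=\cot\rho_{x_0}(g-d\rho_{x_0}\otimes d\rho_{x_0})$, exactly the same two-line computation as in (\ref{Hessx}) gives
$$\Hess\ \phi=-\phi\ g.$$
On the level set $N_{x_0,c}$ this specializes to $\Hess\ \phi=-c\,g$, hence
$$\Hess\ (-\phi)(X,X)=c\,|X|^{2}>0\qquad\text{for every nonzero }X\in TN_{x_0,c},$$
since $c\in(0,1)$ is strictly positive.

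Next, I would check that $|\n\phi|\neq 0$ on $N_{x_0,c}$, which is needed to invoke Lemma \ref{l1}. Since $\phi=\cos\rho_{x_0}$, one has $|\n\phi|=|\sin\rho_{x_0}|\,|\n\rho_{x_0}|=\sqrt{1-\phi^{2}}$; on $N_{x_0,c}$ this equals $\sqrt{1-c^{2}}>0$ because $c\in(0,1)$. So $-\phi$ is a smooth function with nonvanishing gradient on $N_{x_0,c}$ whose restricted Hessian is positive definite in all tangent directions.

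Finally, I would apply Lemma \ref{l1} to the function $-\phi$: the level set $\{-\phi=-c\}$ coincides with $N_{x_0,c}$, and the conclusion of that lemma is that $N_{x_0,c}$ is a convex hypersurface whose direction of convexity is the direction of increasing $-\phi$, that is, the direction of decreasing $(x,x_0)$. There is no real obstacle here — the entire proof is a two-step packaging of the Hessian computation already done in the paper together with Lemma \ref{l1}, and the only thing to watch is the sign: one works with $-\phi$ rather than $\phi$ so as to turn the negative definite Hessian on the ambient sphere into a positive definite one, which is what delivers the stated direction of convexity.
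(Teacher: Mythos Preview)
Your proof is correct and is precisely the natural argument: compute $\Hess\,\phi=-\phi\,g$ as in (\ref{Hessx}), note that the gradient is nonvanishing on $N_{x_0,c}$ since $c\in(0,1)$, and feed $-\phi$ into Lemma~\ref{l1}. The paper itself does not supply a proof of this lemma, merely remarking that it is ``well known and easy to prove,'' so your write-up is in fact more detailed than what the paper offers and is exactly in the spirit of the surrounding computations.
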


\medskip

\subsection{Refined construction of convex functions}

We shall use the functions $v$ and $\varphi$ defined in
(\ref{angle}). Let $K$ be a compact subset of
$\Bbb{V}=S^n\backslash\ol{S}_+^{n-1},$ then there exists a
constant $c\in (0,\f{1}{3}]$ with
$$v\geq 3c$$
on $K$.
\begin{equation}\label{U}
U:=\{x=(x_1,x_2,\cdots,x_{n+1})\in \Bbb{V}:v=\sqrt{x_1^2+x_2^2}>2c\},
\end{equation}
is an open domain of $S^n$ and $K\subset U$.

\begin{thm}\label{p0}

For any compact subset $\Phi$ of $(0,2\pi)$, there exists a
smooth family of nonnegative, smooth functions
 $F(\cdot,\varphi_0)$ ($\varphi_0\in \Phi$)
on $U$, such that:

 (i) $F(\cdot,\varphi_0)$ is strictly convex on $K$;

(ii) $F(x,\varphi_0)=0$ if and only if
$x=x_{\varphi_0}:=(\sin\varphi_0,\cos\varphi_0,0,\cdots,0)$;

(iii) $F(x,\varphi_0)\leq 1$ (or $F(x,\varphi_0)<1$) if and only
if $(x,x_{\varphi_0})\geq \f{3}{2}c$ (or respectively,
$(x,x_{\varphi_0})>\f{3}{2}c$) and $|\varphi-\varphi_0|\leq \pi$.

\end{thm}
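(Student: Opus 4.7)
My plan splits into three steps: construct an auxiliary function $\phi(\cdot,\varphi_0)$ with the right level-set geometry, exponentiate to achieve strict convexity on $K$ via Lemma \ref{l3}, and normalize by an affine rescaling so that $F = 1$ on the prescribed hypersurface. Concretely, I aim for $\phi$ smooth and non-negative on $U$, vanishing only at $x_{\varphi_0}$, with a positive-definite Hessian at that point, and with the property that each level set $\{\phi = c'\}$, $c' > 0$, is a smooth hypersurface whose intersection with $K$ is convex with direction of convexity toward $x_{\varphi_0}$; one distinguished level set $\{\phi = \phi_\star\}$ should coincide with the cap boundary $\{(x,x_{\varphi_0}) = \tfrac{3}{2}c\} \cap \{|\varphi - \varphi_0| \le \pi\}$, and $\phi > \phi_\star$ outside this set.

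For $\phi$ itself, two natural building blocks come from the preceding section. The radial function $R(x,\varphi_0) := 1 - (x,x_{\varphi_0}) = 1 - v \cos(\varphi - \varphi_0)$ has $\Hess R = (x,x_{\varphi_0})\, g$, so it is strictly convex on the hemisphere $\{(x,x_{\varphi_0}) > 0\}$, vanishes precisely at $x_{\varphi_0}$, and has the desired cap boundary as a level set---but only up to the fact that this hypersurface has two sheets inside $\Bbb{V}$ (split by the half-equator) and that $R$ loses convexity outside the hemisphere. The angular function $\psi(x,\varphi_0) := (\varphi - \varphi_0) + f(v) - f(1)$ with $f(v) = \arcsin(c/v)$ also vanishes at $x_{\varphi_0}$, has level sets convex on $K$ by the calculation preceding Theorem \ref{t1}, and distinguishes the two sheets since $|\psi|$ is bounded below by a positive constant on the ``wrong'' one. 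A natural candidate is a smoothly weighted combination, e.g.\ $\phi = \alpha R + \beta\, h(\psi)$ with $h$ smooth, even, vanishing to second order at $0$ and growing large for $|\psi| \ge \pi - \delta$; equivalently one can write $\phi = R \cdot \exp(\mu\, h(\psi))$. The parameters are tuned so that $h$ leaves $R$ essentially unchanged on a neighborhood of the good sheet, ensuring $\{\phi = \phi_\star\}$ still tracks $\{R = 1 - \tfrac{3}{2}c\}$ there.

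The main obstacle is simultaneously achieving (a) strict convexity of the level sets of $\phi$ on \emph{all} of $K$ and (b) the precise identification of the distinguished level set. Where $R$ becomes concave (namely for $(x,x_{\varphi_0}) \le 0$), the angular contribution must supply enough positive-definite Hessian in the direction tangent to $\{\phi = \mathrm{const}\}$, which requires controlling the off-diagonal $v$-$\varphi$ cross terms in $\Hess\psi$ computed earlier; and the angular term must be arranged to vanish on the good sheet so as not to deform the prescribed level set. Here one exploits $v \ge 3c$ on $K$ (giving a lower bound on $|\nabla\psi|$) and $\Phi \Subset (0,2\pi)$ (giving uniform angular separation from the wrong sheet), so all parameters $\alpha,\beta,\mu,\lambda,\delta$ can be chosen uniformly in $\varphi_0 \in \Phi$. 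Given such $\phi$, Lemma \ref{l3} applied on a compact $A \supset K$ excluding a small neighborhood of $x_{\varphi_0}$ yields a large $\lambda$ so that $\lambda^{-1}\exp(\lambda\phi)$ is strictly convex on $A$; convexity persists at the critical point $x_{\varphi_0}$ because $\Hess(\lambda^{-1}\exp(\lambda\phi)) = \Hess\phi$ there. An affine rescaling converts this into $F$ satisfying (i)--(iii), and smoothness of the family in $\varphi_0$ is inherited from the smooth construction of $\phi$.
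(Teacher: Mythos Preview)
Your three-step plan (build an auxiliary $\phi$ with good level-set geometry, exponentiate via Lemma~\ref{l3}, then affinely rescale) is exactly the skeleton the paper uses. The gap is in Step~1: the ansatz $\phi=\alpha R+\beta\,h(\psi)$ or $\phi=R\exp(\mu\,h(\psi))$, with $h$ a function of the angular variable $\psi$ alone, cannot meet (i) and (iii) simultaneously. For (iii) to hold \emph{exactly} you need the angular correction to vanish on all of $V_{\varphi_0}=\{(x,x_{\varphi_0})\ge\tfrac32 c,\ |\varphi-\varphi_0|\le\pi\}$ (otherwise the sublevel set $\{\phi\le\phi_\star\}$ is strictly smaller than $V_{\varphi_0}$). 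On $V_{\varphi_0}$ one has $|\varphi-\varphi_0|\le\arccos(\tfrac32 c)$, and since the shift $\arcsin(c/v)-\arcsin c$ ranges over $[0,\arcsin\tfrac12)$ on $U$, the values of $\psi$ attained on $V_{\varphi_0}$ fill an interval reaching past $\pm(\tfrac\pi2-\tfrac\pi6)$ in each direction for small $c$. But the concave region $K\cap\{(x,x_{\varphi_0})\le 0\}$ (where $R$ alone gives concave level sets and you need the angular term to rescue convexity) begins at $|\varphi-\varphi_0|=\tfrac\pi2$, and its $\psi$-range overlaps the $\psi$-range of $V_{\varphi_0}$: the gap $\arcsin(\tfrac32 c)$ between the two $|\varphi-\varphi_0|$-thresholds is of order $c$, while the $v$-dependent shift in $\psi$ is of order $1$. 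Hence no function of $\psi$ alone can vanish on the first set and be positive on the second, so on part of $K$ your $\phi$ reduces to $R$ with $R>1$ and its level sets there are concave. Your paragraph identifying this as ``the main obstacle'' is correct, but the proposed resolution (``parameters can be chosen uniformly'') does not address it.

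The paper sidesteps this by a genuinely different construction of the auxiliary function (which it also calls $\psi$). Rather than writing $\psi$ as an explicit combination of $R$ and an angular term, it defines $\psi(x,\varphi_0)$ \emph{implicitly} as the unique root of
\[
H_{\varphi_0}(x,t)=-v\cos\bigl(\varphi-\varphi_0\pm f(t)\bigr)+f(t)-t+1=0,
\]
where $f:[0,\infty)\to[0,\infty)$ is a fixed smooth transition function with $f\equiv 0$ on $[0,1-\tfrac32 c]$, $f(t)=t-1+c$ for $t\ge 1-\tfrac12 c$, and $0\le f'\le 1$. The point of this device is that for every $t_0\ge 0$ the level set $\{\psi(\cdot,\varphi_0)=t_0\}$ is (each of its two $\varphi\gtrless\varphi_0$ halves) a piece of a genuine small sphere $\{(x,y(t_0))=f(t_0)-t_0+1\}$ with center $y(t_0)$ on the equator $v=1$ and height in $[c,\tfrac32 c]$; by Lemma~\ref{l2} such small spheres are \emph{automatically} convex hypersurfaces, so no delicate balancing of competing Hessians is needed. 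For $t_0\le 1-\tfrac32 c$ one has $f(t_0)=0$ and the level set is exactly $\{(x,x_{\varphi_0})=1-t_0\}$, giving (iii) on the nose; for larger $t_0$ the centers $y(t_0)$ rotate away from $x_{\varphi_0}$ (in opposite directions on the two halves), sweeping out the rest of $U$. Smoothness across $\{\varphi=\varphi_0\}$ follows because near that set $\psi=1-(x,x_{\varphi_0})$ explicitly. Then Lemma~\ref{l3} and the affine rescaling finish as in your Steps~2--3. The idea you are missing is this ``rotating family of spherical caps'' realized via an implicit function; it is what makes convexity hold on all of $K$ without conflicting with the exact sublevel identification in (iii).
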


\begin{proof}

Let $f$ be a smooth function on $[0,\infty)$ satisfying
$$\left\{\aligned
f(t)&=0,\hskip0.7in  t\in \left[0, 1-\f{3}{2}c\right];\\ f(t)&=
t-1+c,\quad t\in
\left[1-\f{1}{2}c, \infty\right);\\
0&\leq f'\leq 1.  \endaligned\right.$$ Then we can define $H$ on
$U\times (0,2\pi)\times [0,\infty)$ by
\begin{equation}\label{H}
(x,\varphi_0,t)\mapsto \left\{\begin{array}{cc}
-v\cos(\varphi-\varphi_0+f(t))+f(t)-t+1 & \mbox{if }\varphi\leq \varphi_0,\\
-v\cos(\varphi-\varphi_0-f(t))+f(t)-t+1 & \mbox{if }\varphi>\varphi_0.
\end{array}\right.
\end{equation}

Now we fix $\varphi_0\in (0,2\pi)$ and denote
$H_{\varphi_0}(x,t):=H(x,\varphi_0,t)$. For arbitrary $x\in U$,
put
\begin{equation}
I_x:=\big\{t\in [0,\infty): \max\{0,|\varphi-\varphi_0|-\pi\}\leq
f(t)\leq |\varphi-\varphi_0|\big\}
\end{equation}
then obviously $I_x$ is a closed interval,
$I_x=:[m_x,M_x]$. If $\varphi\leq \varphi_0$ and $t\in I_x$, then
\begin{equation}
\p_2
H_{\varphi_0}(x,t)=f'(t)\big(1+v\sin(\varphi-\varphi_0+f(t))\big)-1\leq
0
\end{equation}
and $\p_2 H_{\varphi_0}(x,t)=0$ if and only if $f'(t)=1$ and
$f(t)=|\varphi-\varphi_0|-\pi$ or $|\varphi-\varphi_0|$; which
implies $t=m_x$ or $M_x$. Hence
\begin{equation}\label{decrease}
\p_2 H_{\varphi_0}(x,\ \cdot\ )<0 \qquad \mbox{on }(m_x,M_x).
\end{equation}
Similarly (\ref{decrease}) holds when $\varphi>\varphi_0$.

It is easily seen that when $|\varphi-\varphi_0|\leq \pi$,
\begin{equation}\label{zero1}
H_{\varphi_0}(x,m_x)=H_{\varphi_0}(x,0)=-v\cos(\varphi-\varphi_0)+1\geq 0.\\
\end{equation}
Since $f'\leq 1$, $f(t)-t$ is a decreasing function. So
$f(t)-t\geq \lim_{t\ra \infty}\big(f(t)-t\big)=-1+c.$ Moreover,
when $|\varphi-\varphi_0|>\pi$ we have
$f(m_x)=|\varphi-\varphi_0|-\pi$ and by (\ref{H})
\begin{equation}\label{zero2}
H_{\varphi_0}(x,m_x)=v+f(m_x)-m_x+1\geq v+c>0.
\end{equation}
By the definition of $M_x$, $f$ cannot be identically zero on any
neighborhood of $M_x$; hence $M_x\geq 1-\f{3}{2}c$ and moreover
$f(M_x)-M_x\leq f(1-\f{3}{2}c)-(1-\f{3}{2}c)=-1+\f{3}{2}c$.
Therefore
\begin{equation}\label{max}
H_{\varphi_0}(x,M_x)=-v+f(M_x)-M_x+1<-2c-1+\f{3}{2}c+1=-\f{1}{2}c<0.
\end{equation}
By (\ref{decrease})-(\ref{max}), for each $x\in U$, there exists a
unique $\psi=\psi(x,\varphi_0)\in [m_x,M_x)$, such that
\begin{equation}
H(x,\varphi_0,\psi(x,\varphi_0))=H_{\varphi_0}(x,\psi(x,\varphi_0))=0.
\end{equation}

Denote
$$\Om:=\{(x,\varphi_0)\in U\times (0,2\pi):\varphi\neq \varphi_0\},$$
then $H$ is obviously smooth on $\Om\times [0,\infty)$. The
 implicit function theorem implies that $\psi$ is smooth
on $\Om$. To show the smoothness of $\psi$, it remains to prove that
$\psi$ is smooth on $\{(x,\varphi_0)\in U\times
(0,2\pi):\varphi=\varphi_0\}$. Denote
$$\Om_0:=\{(x,\varphi_0)\in U\times (0,2\pi): (x,x_{\varphi_0})\geq \f{3}{2}c\mbox{ and }|\varphi-\varphi_0|\leq \pi\},$$
then for every $(x,\varphi_0)\in \Om_0$,
$1-(x,x_{\varphi_0})=1-v\cos(\varphi-\varphi_0)\leq 1-\f{3}{2}c$
and hence $f\big(1-(x,x_{\varphi_0})\big)=0$; which implies
$1-(x,x_{\varphi_0})\in [m_x,M_x)$ and
$$H_{\varphi_0}(x,1-(x,x_{\varphi_0}))=-v\cos(\varphi-\varphi_0)-(1-(x,x_{\varphi_0}))+1=0.$$
Therefore
\begin{equation}\label{value}
\psi(x,\varphi_0)=1-(x,x_{\varphi_0})=1-\cos\rho(x)\qquad \forall
(x,\varphi_0)\in \Om_0
\end{equation}
(where $\rho$ denotes the distance from $x_{\varphi_0}$   on $S^n$), and
$\psi$ is obviously smooth on the interior of $\Om_0$. From
(\ref{U}) it is easily seen that $\{(x,\varphi_0)\in U\times
(0,2\pi):\varphi=\varphi_0\}\subset \mbox{int}(\Om_0)$, which yields the smoothness of $\psi$.

For  $\varphi_0\in (0,2\pi)$, put
\begin{equation}
V_{\varphi_0}:=\{x\in U:(x,x_{\varphi_0})\geq \f{3}{2}c\mbox{ and
}|\varphi-\varphi_0|\leq \pi\}.
\end{equation}
Then by (\ref{value}) and (\ref{Hess}), on $V_{\varphi_0}$,
\begin{equation}\label{Hess1}\aligned
\Hess\ \psi(\cdot,\varphi_0)&=\sin\rho\ \Hess\ \rho+\cos\rho\ d\rho\otimes d\rho\\
&=\cos\rho(g-d\rho\otimes d\rho)+\cos\rho\ d\rho\otimes d\rho\\
&=(1-\psi) g\geq \f{3}{2}c\ g;
\endaligned
\end{equation}
i.e. $\psi(\cdot,\varphi_0)$ is strictly convex on $V_{\varphi_0}$.

From (\ref{value}) it is easily seen that
$\psi(\cdot,\varphi_0)\leq 1-\f{3}{2}c$ on $V_{\varphi_0}$. On
the other hand, for arbitrary $x\in U\backslash V_{\varphi_0}$,
one of the following two cases must occur: (I)
$|\varphi-\varphi_0|>\pi$; (II) $|\varphi-\varphi_0|\leq \pi$ and
$(x,x_{\varphi_0})<\f{3}{2}c$.

If case (I) holds, then $\psi(x,\varphi_0)\geq m_x>1-\f{3}{2}c$
by $f(t)>0$;  in the second case, since
$$H_{\varphi_0}(x,1-\f{3}{2}c)=-(x,x_{\varphi_0})-(1-\f{3}{2}c)+1=-(x,x_{\varphi_0})+\f{3}{2}c>0$$
and by the monotonicity of $H_{\varphi_0}$ with respect to the $t$
variable (see (\ref{decrease})), we also have
$\psi(x,\varphi_0)>1-\f{3}{2}c$. Therefore
\begin{equation}\label{value2}
V_{\varphi_0}=\{x\in U: \psi(x,\varphi_0)\leq 1-\f{3}{2}c\}.
\end{equation}
Similarly
\begin{equation}\label{value3}
\mbox{int}(V_{\varphi_0})=\{x\in U: \psi(x,\varphi_0)<1-\f{3}{2}c\}.
\end{equation}

For each $t_0\geq 1-\f{3}{2}c$ and $x\in U$ satisfying
$\varphi<\varphi_0$, $\psi(x,\varphi_0)=t_0$ if and only if
$$0=H_{\varphi_0}(x,t_0)=-v\cos(\varphi-\varphi_0+f(t_0))+f(t_0)-t_0+1,$$
i.e.
$$(x,y(t_0))=f(t_0)-t_0+1,$$
where
$$y(t_0)=(\sin(\varphi_0-f(t_0)),\cos(\varphi_0-f(t_0)),0,\cdots,0).$$
Since $f'\leq 1$, $t\mapsto f(t)-t+1$ is a decreasing function,
hence

$$\f{3}{2}c\ge f(t_0)-t_0+1\geq \lim_{t\ra +\infty}(f(t)-t+1)=c.$$

By Lemma \ref{l2},
$$N_{t_0,\varphi_0}^-\mathop{=}\limits^{def.}\{x\in U:\varphi<\varphi_0,\psi(x,\varphi_0)=t_0\}$$
is a convex hypersurface, and the direction of convexity is the
direction of decreasing the function $x\ra (x,y(t_0))$. By noting
that
$$H_{\varphi_0}(x,t_0)=-(x,y(t_0))+f(t_0)-t_0+1,$$
we have
$$\n_{\nu}H_{\varphi_0}(\cdot,t_0)=-\n_{\nu}(\cdot,y(t_0))>0,$$
where $\nu$ is the unit normal vector field on $N_{t_0,\varphi_0}^-$
pointing in the direction of convexity. Then
$\td{H}(x):=H_{\varphi_0}(x,\psi(x,\varphi_0))$ satisfies $\td{H}\equiv
0$ and at each $x\in N_{t_0,\varphi_0}^-$
$$0=\n_{\nu}\td{H}=\n_{\nu}H_{\varphi_0}(\cdot,t_0)+\p_2 H_{\varphi_0}\n_\nu\psi(\cdot,\varphi_0);$$
which implies $\n_\nu\psi(\cdot,\varphi_0)>0$ (since $\p_2
H_{\varphi_0}(x,\psi(x,\varphi_0))<0$). In other words, $|\n
\psi(\cdot,\varphi_0)|\neq 0$ on $N_{t_0,\varphi_0}^-$ and the
direction of convexity of $N_{t_0,\varphi_0}^-$ is the direction of
increasing $\psi(\cdot,\varphi_0)$. By Lemma \ref{l1}, $\Hess\
\psi(\cdot,\varphi_0)(X,X)>0$ for every nonzero $X\in
TN_{t_0,\varphi_0}^-$ such that $d\psi(\cdot,\varphi_0)(X)=0$.

Similarly, putting
$$N_{t_0,\varphi_0}^+:=\{x\in U:\varphi>\varphi_0,\psi(x,\varphi_0)=t_0\},$$
then $|\n \psi(\cdot,\varphi_0)|\neq 0$ on $N_{t_0,\varphi_0}^+$ and
$\Hess\ \psi(\cdot,\varphi_0)(X,X)>0$ for every nonzero $X\in
TN_{t_0,\varphi_0}^+$ such that $d\psi(\cdot,\varphi_0)(X)=0$.

By the compactness of $K$ and $\Phi\subset (0,2\pi)$, there are
positive constants $c_2, c_3$ and $c_4$, such that for all
$\varphi_0\in \Phi$,
$$\Hess\ \psi(\cdot,\varphi_0)(X,X)\geq c_2|X|^2$$
for every nonzero $X\in TK$ which is tangential to one of the level
sets of $\psi(\cdot,\varphi_0)$, and
$$|\Hess\ \psi(\cdot,\varphi_0)|\leq c_3,\qquad|\n \psi(\cdot,\varphi_0)|\geq c_4$$
on $K$.  Lemma \ref{l3} then implies that there exists $\la_0>0$
satisfying
\begin{equation}\label{Hess2}
\Hess (\la_0^{-1}\exp(\la_0\psi(\cdot,\varphi_0))\geq \f{1}{2}c_2\
g.
\end{equation}
Now we take
\begin{equation}
F(\cdot,\varphi_0):=\f{\exp(\la_0\psi(\cdot,\varphi_0))-1}{\exp\big(\la_0(1-\f{3}{2}c)\big)-1}.
\end{equation}
Then from (\ref{Hess2}), $F(\cdot,\varphi_0)$ is a strictly convex
function on $K$; while conclusions (ii) and (iii) in the Theorem
follow from (\ref{value}), (\ref{value2}) and (\ref{value3}),
respectively.
\end{proof}

{\bf Remark:}
The auxiliary function $f$ in the above proof can be easily
obtained from the  standard bump functions. We choose a
nonnegative smooth function $h$ on $\R$, whose supporting set is
$[0,1].$ Let
$$h_1(t):=\f{\int_0^t h}{\int_0^1 h},$$
then $0\leq h_1\leq 1$, $h_1|_{(-\infty,0]}\equiv 0$ and
$h_1|_{[1,+\infty)}\equiv 1.$ Define
$$h_2(t):=h_1^\be(\f{t}{c}),$$
where $\be>0$ to be chosen, then $0\leq h_2\leq 1$,
$h_2|_{(-\infty,0]}\equiv 0$, and $h_2|_{[c,+\infty)}\equiv 1.$
Note that $\a\in (0,+\infty) \mapsto \int_0^c h_1^\a(\f{t}{c})$
is a strictly decreasing function, which converges to $0$ as
$\a\ra +\infty$ and converges to $c$ as $\a\ra 0.$ It enables us
to find $\be>0$, such that
$$\int_0^c h_2=\f{1}{2}c;$$
Then
$$f(t):=\int_0^{t-1+\f{3}{2}c}h_2$$
is the required function.

\bigskip\bigskip

\Section{Some properties of weakly harmonic maps}{Some properties of weakly harmonic maps}

Let $(M^m,g)$ and $(N^n,h)$ be Riemannian manifolds, not necessarily complete. Here and in the sequel, we denote
by $\{e_1,\cdots,e_m\}$ a local orthonormal frame field on $M$
and by $\{f_1,\cdots,f_n\}$ a local orthonormal frame field on
$N$. We use the summation convention with the index ranges
$$1\leq \a,\be\leq m,\qquad 1\leq i,j\leq n.$$

$u\in H_{loc}^{1,2}(M,N)$ is called a \textbf{weakly
harmonic map} if it is a critical point of the energy functional $E$;
i.e.
\begin{equation}
\f{d}{dt}\Big|_{t=0}E(\exp_u(t\xi))=0.
\end{equation}
for all compactly supported bounded sections $\xi$ of
$u^{-1}TN$ of class $H^{1,2}$, where $u^{-1}TN$ denotes the pull-back bundle of $TN$ (see \cite{J1} p.452).
A straightforward calculation shows
\begin{equation}\label{weak}
\int_M \lan du(e_\a),\n_{e_\a}\xi\ran*1=0.
\end{equation}
Here $\n$ is the
connection on $u^{-1}TN$ induced by the Levi-Civita
connections of $M$ and $N$.

Suppose $\Om$ is an open domain of $M$ and $K$ is a compact domain of $N$, such that $u(\Om)\subset K$
and there is a smooth and strictly convex function $F$ on $K$, i.e.
there exists a positive constant $K_0$ such that $\Hess\ F\geq K_0
h$.

Let $\eta$ be a non-negative smooth function on $\Om$ with compact
support. Put
\begin{equation}
\xi(y):=\eta(y)\n^N F\big(u(y)\big).
\end{equation}
Then (\ref{weak}) tells us
\begin{equation}\label{har}\aligned
0&=\int_\Om \lan du(e_\a),\n_{e_\a}\xi\ran*1\\
&=\int_\Om \lan du(e_\a),(\n_{e_\a}\eta)\n^N F\big(u(y)\big)\ran *1+\int_\Om \lan du(e_\a),\eta\n_{e_\a}\n^N F\big(u(y)\big)\ran*1\\
&=I+II.
\endaligned
\end{equation}
$f:=F\circ u$ then satisfies $\n_{e_\a} f=\n_{u_*e_\a}^N F$, hence
\begin{equation}\label{har1}
I=\int_\Om \n\eta\cdot \n f\ *1.
\end{equation}
Without loss of generality, one can assume $\n^N f_i=0$ for every
$1\leq i\leq n$ at the considered point, then
\begin{equation}\aligned
\n_{e_\a}\n^N F\big(u(y)\big)&=\n_{e_\a}\big((\n_{f_i}^N
F)f_i\big)=(\n_{u_*e_\a}^N\n_{f_i}^N F)f_i=\Hess\ F(u_* e_\a,f_i)f_i
\endaligned
\end{equation}
and moreover
\begin{equation}\label{har2}\aligned
II&=\int_\Om \eta\lan du(e_\a),\n_{e_\a}\n^N F\big(u(y)\big)\ran*1=\int_\Om \eta \Hess\ F(u_*e_\a,f_i)\lan f_i,u_*e_\a\ran*1\\
&=\int_\Om \eta \Hess\ F(u_*e_\a,u_*e_\a)*1\geq K_0\int_\Om \eta |u_*e_\a|^2*1\\
&=K_0\int_\Om \eta|du|^2*1.
\endaligned
\end{equation}
Substituting (\ref{har1}) and (\ref{har2}) into (\ref{har}) yields
\begin{equation}\label{subhar}
K_0\int_\Om \eta|du|^2*1\leq -\int_\Om \n\eta\cdot \n f*1.
\end{equation}
It says that $f=F\circ u$ is a subharmonic function in the weak
sense.

\subsection{Additional assumptions}

In the following, we shall assume that $(M,g)$ satisfies 3
additional conditions:

(D) There is a distance function $d$ on $M$ (which is not
necessary induced from the Riemannian metric on $M$), and the metric
topology induced by $d$ is equivalent to the Riemannian topology of
$M$; moreover, for each $y_1,y_2\in M$, $d(y_1,y_2)\leq
r(y_1,y_2)$, where $r(\cdot,\cdot)$ is the distance function of
$M$ with respect to the Riemannian metric.

(V) \textbf{Doubling property}: Let $B_R(y)$ be the ball centered
at $y$ of radius $R$ given by the distance $d$, denote by $V(y,R)$
the volume of $B_R(y)$, then there are $R_0\in (0,\infty]$ and a
positive constant $K_1$ independent of $y$ and $R$, such that
\begin{equation}
\label{double}
V(y,2R)\leq K_1\ V(y,R)\qquad \mbox{whenever }R\leq R_0.
\end{equation}

(P) \textbf{Neumann-Poincar\'{e} inequality}: For arbitrary $y\in
M$ and $R>0$ satisfying $B_R(y)\subset\subset M$, the following
inequality holds
\begin{equation}\label{str-p}
\int_{B_{ R}( y)}|v-\bar{v}_{B_{ R}(y)}|^2*1\leq
K_2R^2\int_{B_R(y)}|\n v|^2 *1
\end{equation}
where
$$\bar{v}_{B_{ R}(y)}:=\f{\int_{B_{ R}(y)}v*1}{V(y,R)}$$
is the average value of $v$ on $B_{ R}(y)$, and $K_2$ is a positive
constant not depending on $y$ and $R$.

We say that  the manifold $M$ satisfies the DVP-condition if it satisfies
these 3 conditions.

{\bf Remarks:}
\begin{enumerate}
\item[4.1]
\label{r2}
Condition (D) implies $|\n d(\cdot,y)|\leq 1$ for each $y\in M$.
Hence it is easy to construct a cut-off function $\eta$ on
$B_R(y)$ satisfying
$$0\leq \eta\leq 1,\ \eta|_{B_r(y)}\equiv 1,\mbox{and } |\n \eta|\leq c_0(R-r)^{-1}$$
by letting $\eta=\varphi\big(d(\cdot,y)\big)$,
where $\varphi$ is a smooth function on $[0,\infty)$,
such that $0\leq \varphi\leq 1$, $\varphi|_{[0,r]}=1$, $\varphi|_{[R,\infty)}=0$, and
$|\varphi'|\leq \f{c_0}{R-r}$.

\item[4.2]
Put
\begin{equation}
\nu_0:=\f{\log K_1}{\log 2}.
\end{equation}
For arbitrary $0<r<R\leq R_0$, we consider the integer $k$ such that
$2^{k-1}<\f{R}{r}\leq 2^k$; from the doubling property it follows
that
\begin{equation}\label{con1}
V(x,R)\leq V(x,2^k r)\leq K_1^k V(x,r)<
K_1\big(\f{R}{r}\big)^{\nu_0}V(x,r).
\end{equation}

\item[4.3]
It is well-known that the Neumann-Poincar\'{e} inequality is closely
related to the eigenvalues of the Laplace-Beltrami operator with Neumann
boundary values. More precisely, let $\mu_2(\Om)$ be the second
eigenvalue of
\begin{equation}\aligned
\De v+\mu v=0\qquad& \mbox{in }\Om\\
\f{\p v}{\p n}=0\qquad &\mbox{on }\p\Om
\endaligned
\end{equation}
where $n$ denotes the outward normal vector field,
then $\mu_2(\Om)$ is characterized by
\begin{equation}
\mu_2(\Om)=\min_{\int_\Om v*1=0}\f{\int_\Om |\n v|^2*1}{\int_\Om
|v|^2*1}.
\end{equation}
Therefore Condition (P) is equivalent to
$\mu_2(B_R(y))\geq K_2^{-1}R^{-2}$.

The Neumann-Poincar\'{e} inequality is also related to Cheeger's \cite{c}  isoperimetric constant
\begin{equation}
h_N(\Om):=\inf_{A}\f{\mbox{Vol}(\p A\cap
\mbox{int}(\Om))}{\mbox{Vol}(A)}
\end{equation}
where $A$ stands for an open subset of $\Om$
satisfying $\mbox{Vol}(A)\leq \f{1}{2}\mbox{Vol}(\Om)$. Cheeger
proved
\begin{equation}
\mu_2(\Om)\geq \f{1}{4}h_N^2(\Om).
\end{equation}

\item[4.4]
(\ref{str-p}) is the strong form of the Poincar\'{e} inequality; in
contrast, the weak form of Poincar\'{e} is
\begin{equation}\label{weak-p}
\int_{B_{\k R}( y)}|v-\bar{v}_{B_{\k R}(y)}|^2*1\leq C
R^2\int_{B_R(y)}|\n v|^2 *1
\end{equation}
where $\k\in (0,1)$ and $C$ is a constant not depending on $y$ and
$R$. It follows from the work of D. Jerison \cite{je} that the
doubling property and (\ref{weak-p}) implies (\ref{str-p}). Hence
Condition (P) could be replaced by (\ref{weak-p}).

\item[4.5]
From the work of Saloff-Coste \cite{sc} and Biroli-Mosco
\cite{b-m2}, Conditions (V) and (P) imply the following
Sobolev-type inequality: for $y\in M$ and $R>0$ satisfying
$B_{2R}(y)\subset\subset M$,
\begin{equation}\label{sobolev}
\Big(\int_{B_R(y)}|v|^{\f{2\nu}{\nu-2}}*1\Big)^{\f{\nu-2}{2\nu}}\leq
K_3R\ V(y,R)^{-\f{1}{\nu}} \big(\int_{B_R(y)}|\n
v|^2*1\big)^{\f{1}{2}},
\end{equation}
where $v\in H_0^{1,2}(B_R(y))$, $\nu$ is a constant only depending
on $K_1$, which is greater or equal to $\nu_0=\f{\log K_1}{\log 2}$
and strictly greater than $2$; $R\leq R_0$ and $K_3$ is a positive
constant only depending on $K_1$ and $K_2$. With
\begin{equation}
\aint{\Om}v:=\f{\int_\Om v*1}{\V(\Om)},
\end{equation}
(\ref{sobolev}) is equivalent to
\begin{equation}\label{con2}
\Big(\aint{B_R(y)}|v|^{\f{2\nu}{\nu-2}}\Big)^{\f{\nu-2}{2\nu}}\leq
K_3R \big(\aint{B_R(y)}|\n v|^2\big)^{\f{1}{2}}.
\end{equation}

\end{enumerate}

\medskip
\subsection{Harnack inequality}

In the sequel, we shall make use of the following abbreviations:
Fix a point $y_0\in M$ and let
$B_R=B_R(y_0)\subset\subset M$ with $R\leq \f{1}{2}R_0$, then
$V(R):=V(y_0,R)$ and for arbitrary $v\in
L^\infty(B_R)$,
\begin{equation}\aligned
&v_{+,R}:=\sup_{B_R}v,\qquad
v_{-,R}:=\inf_{B_R}v,
\qquad \bar{v}_R :=\aint{B_R}v,\\
&|\bar
v|_{p,R}:=\Big(\aint{B_R}|v|^p\Big)^{\f{1}{p}}\qquad
p\in (-\infty,+\infty).
\endaligned
\end{equation}
It is easily seen that $p\mapsto |\bar v|_{p,R}$ is an increasing
function, $\lim_{p\ra +\infty}|\bar v|_{p,R}=|v|_{+,R}$ and
$\lim_{p\ra -\infty}|\bar v|_{p,R}=|v|_{-,R}$, if $|\bar
v|_{p,R}$ is well-defined.

\begin{lem}\label{l8}
Let $M$ be an $m$-dimensional Riemannian manifold satisfying
DVP-condition, then for any positive superharmonic
function $v$ on $B_R$ satisfying $R\leq \f{1}{2}R_0$ and
$B_{2R}\subset\subset M$, $p\in (0,\f{\nu}{2-\nu})$ and $\th\in
[\f{1}{2},1)$, we have the  estimate
\begin{equation}
|\bar v|_{p,\th R}\leq \g_1 v_{-,\th R}
\end{equation}
Here $\g_1$ is a positive constant only depending on $K_1,K_2,p$ and
$\th$.
\end{lem}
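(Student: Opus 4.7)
The plan is a Moser-type iteration for positive superharmonic functions, adapted to our setting via the Sobolev inequality (\ref{sobolev}) (which follows from the DVP-condition by Remark 4.5) and an auxiliary BMO control on $\log v$. Write $\chi:=\f{\nu}{\nu-2}>1$ throughout. The fundamental one-step estimate is derived by testing the weak superharmonicity $\int\lan\n v,\n\phi\ran*1\geq 0$ against $\phi=\eta^2 v^{\be-1}$ (valid for any $\be<1$, $\be\neq 0$), where $\eta$ is a cut-off of the type built in Remark 4.1, supported in $B_{r'}$ and equal to $1$ on $B_r$. Expanding $\n\phi$ and absorbing the mixed term via Cauchy-Schwarz yields the Caccioppoli bound $\int\eta^2|\n w|^2*1\leq C(\be)\int w^2|\n\eta|^2*1$ for $w:=v^{\be/2}$. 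Feeding this into (\ref{con2}) applied to $\eta w\in H^{1,2}_0(B_{r'})$ and using the doubling property (\ref{double}) to control $V(r)/V(r')$, I obtain
$$|\bar v|_{\be\chi,r}\leq\Big(\f{C(\be)}{r'-r}\Big)^{2/\be}|\bar v|_{\be,r'}\qquad(0<\be<1),$$
with the reverse inequality when $\be<0$, since raising to the negative power $2/\be$ flips the direction.

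For any fixed $\be_0<0$, I iterate this one-step estimate with exponents $\be_k:=\be_0\chi^k\ra-\infty$ and geometrically shrinking radii $r_k$ interpolating between some $r_{\mathrm{start}}\in(\th R,R)$ and $\th R$. Convergence of the series $\sum 2/|\be_k|$ together with the geometric decay of the radius gaps keeps the product of the one-step constants bounded, so in the limit one obtains $v_{-,\th R}\geq C_1(\be_0,\th)\,|\bar v|_{\be_0,r_{\mathrm{start}}}$. Symmetrically, a forward iteration with positive exponents $p,p/\chi,p/\chi^2,\dots$ (all below $1$ after the first step, since $p<\chi$ gives $p/\chi<1$) yields $|\bar v|_{p,\th R}\leq C_1'\,|\bar v|_{p/\chi^N,r_N}$ for any $N$, on some $r_N\in(\th R,R)$. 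Choosing $N$ so that $p/\chi^N\leq p_0$ (with $p_0$ to be produced by the next step) and applying H\"older monotonicity reduces the task to controlling $|\bar v|_{p_0,r_N}$.

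The expected main obstacle is bridging positive and negative exponents, since H\"older monotonicity alone cannot cross zero. For this I would use the classical BMO route: testing the weak superharmonicity with $\phi=\eta^2/v$ (after a mild truncation $v\mapsto\max(v,\ep)$ if needed) yields an $L^2$ bound on $\n\log v$, which combined with the Neumann-Poincar\'e inequality (\ref{str-p}) places $\log v$ in BMO uniformly on balls. A John-Nirenberg argument valid in doubling metric measure spaces with Poincar\'e inequality (in the spirit of Bombieri-Giusti and Saloff-Coste) then delivers some $p_0>0$ and a constant $C_2$ with $|\bar v|_{p_0,r}\cdot|\bar v|_{-p_0,r}\leq C_2$, hence $|\bar v|_{p_0,r}\leq C_2|\bar v|_{-p_0,r}$. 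Chaining the three ingredients---forward reduction $|\bar v|_{p,\th R}\leq C\,|\bar v|_{p_0,r_N}$, BMO crossover $|\bar v|_{p_0,r_N}\leq C\,|\bar v|_{-p_0,r_N}$, and the negative-exponent iteration (based at $r_N$ and shrinking back to $\th R$) $|\bar v|_{-p_0,r_N}\leq C\,v_{-,\th R}$---assembles the desired bound $|\bar v|_{p,\th R}\leq\g_1 v_{-,\th R}$, with $\g_1$ depending only on $K_1,K_2,p,\th$. The main delicacy lies in the BMO/John-Nirenberg step and in careful bookkeeping of the constants through the iteration to ensure independence of both $v$ and $R$.
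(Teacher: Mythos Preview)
Your proposal is correct and follows essentially the same route as the paper's own argument: Caccioppoli (reverse Poincar\'e) for powers $v^{\be/2}$ with $\be<1$, Moser iteration in both the negative-exponent and positive-exponent directions via the Sobolev inequality (\ref{con2}), a BMO bound on $\log v$ from the Neumann--Poincar\'e inequality, and a John--Nirenberg/Bombieri--Giusti crossover to link the two sides. The paper organizes the final step by invoking the abstract John--Nirenberg theorem of \cite{b-g} (Theorem 4) directly on the iteration estimates, whereas you first extract a small-$p_0$ crossover $|\bar v|_{p_0,r}\,|\bar v|_{-p_0,r}\leq C$ and then chain; these are equivalent packagings of the same argument.
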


This Harnack inequality  follows from the work of
Moser\cite{M}, Bombieri-Giusti\cite{b-g}, Saloff-Coste\cite{sc}
and Biroli-Mosco\cite{b-m} as we shall now briefly describe. Firstly, the superharmonicity of $v$
implies a  reserve Poincar\'{e} inequality for $v^{k}$
for arbitrary $k<\f{1}{2}$ (see \cite{M} Lemma 4); with the aid
of the Sobolev inequality (\ref{con2}) and suitable cut-off functions
as described in Remark \ref{r2},  we can obtain
\begin{equation}\label{sup1}
v_{-,(1-\tau)R}\geq (c_1\tau^{-c_2})^{-\f{1}{q}}|\bar v|_{-q,R}
\end{equation}
for arbitrary $\tau\in (0,\f{1}{2}]$ and $q\in (0,\infty)$ by
Moser's iteration. Again using Moser's iteration repeatedly,
one can get
\begin{equation}\label{sup2}
|\bar v|_{q,(1-\tau)R}\leq
(c_{3}\tau^{-c_{4}})^{\f{1}{s}-\f{1}{q}}|\bar v|_{s,R}.
\end{equation}
Here $\tau\in (0,\f{1}{2}]$, $q\in (0,\f{\nu}{\nu-2})$, $s\in (0,(\f{2}{q}-\f{\nu-2}{\nu})^{-1}]$ and
$c_3, c_4$ are positive constants
depending only on $K_1,K_2$ and $q$. By the Neumann-Poincar\'{e} inequality, we arrive at
\begin{equation}\label{sup3}
\sup_{\tau\in [\tau_0,\f{1}{2}]}\inf_{k\in
\R}\aint{B_{(1-\tau)R}}\Big|\log\big(\f{v}{k}\big)\Big|\leq
c_5(\tau_0,K_1,K_2)
\end{equation}
as in \cite{b-g}, where $\tau_0\in (0,\f{1}{2})$. Combining with
(\ref{sup1}), (\ref{sup2}) and (\ref{sup3}), one can apply an
abstract John-Nirenberg inequality (\cite{b-g}, Theorem 4) to
obtain the result.

From Lemma \ref{l8}, we get  analogues of
Corollary 1 and Lemma 7 in \cite{J}:

\begin{cor}\label{l4}
Let $M$ be an $m$-dimensional Riemannian manifold satisfying
DVP-condition, $v$ be a subharmonic function on $B_R$
satisfying $R\leq \f{1}{2}R_0$ and $B_{2R}\subset\subset M$. Then
there exists a constant $\de_0\in (0,1)$, only depending on $K_1$
and $K_2$, such that
\begin{equation}\label{ineq6}
v_{+,\f{R}{2}}\leq (1-\de_0)v_{+,R}+\de_0 \bar v_{\f{R}{2}}.
\end{equation}

\end{cor}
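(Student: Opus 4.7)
The plan is to apply the weak Harnack inequality from Lemma \ref{l8} to the nonnegative superharmonic function $w:=v_{+,R}-v$ on $B_R$; this is the standard device for turning a bound-from-below for superharmonic functions into an oscillation reduction for subharmonic functions.

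First I would note that $w\geq 0$ on $B_R$ because $v\leq v_{+,R}$ there, and that $w$ is weakly superharmonic, since $v$ is weakly subharmonic and adding a constant preserves weak (super)harmonicity. To make Lemma \ref{l8} directly applicable without worrying about strict positivity (as $w$ may vanish), I would pass to $w+\varepsilon$, apply the inequality, and let $\varepsilon\to 0$ at the end; this leaves the infima, suprema, and averages that appear below unchanged in the limit.

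Next I apply Lemma \ref{l8} with the choice $p=1$ and $\th=\f{1}{2}$; since $\nu>2$ we have $\f{\nu}{\nu-2}>1$, so $p=1$ lies in the admissible range of exponents. This yields a constant $\g_1=\g_1(K_1,K_2)\geq 1$ with
\begin{equation*}
\bar w_{R/2}\;\leq\;\g_1\, w_{-,R/2}.
\end{equation*}
Unpacking the definitions gives $w_{-,R/2}=v_{+,R}-v_{+,R/2}$ and $\bar w_{R/2}=v_{+,R}-\bar v_{R/2}$, so the displayed inequality rearranges to
\begin{equation*}
v_{+,R/2}\;\leq\;\bigl(1-\g_1^{-1}\bigr)\,v_{+,R}+\g_1^{-1}\,\bar v_{R/2},
\end{equation*}
which is the claimed inequality with $\de_0:=\g_1^{-1}$.

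The main obstacle is essentially absent, since Lemma \ref{l8} does all the analytic work: the proof is a brief bookkeeping exercise once one recognises that $v_{+,R}-v$ is the right test function to feed into the weak Harnack inequality. The only minor points to verify are that $\g_1>1$, so that $\de_0\in(0,1)$ and the estimate is nontrivial (testing Lemma \ref{l8} on the constant function $1$ shows $\g_1\geq 1$, and $\g_1=1$ would force every positive superharmonic function on $B_R$ to be constant on $B_{R/2}$, which is false), together with the $\varepsilon$-perturbation mentioned above.
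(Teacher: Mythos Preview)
Your proof is correct and follows essentially the same approach as the paper: apply Lemma \ref{l8} with $p=1$, $\th=\f{1}{2}$ to the positive superharmonic function $v_{+,R}-v+\ep$, unpack the resulting inequality, let $\ep\to 0$, and set $\de_0=\g_1^{-1}$. Your additional remark justifying $\g_1>1$ (so that $\de_0\in(0,1)$) is a detail the paper leaves implicit.
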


\begin{proof}

For arbitrary $\ep>0$, $v_{+,R}-v+\ep$ is a positive superharmonic
function on $B_R$, then Lemma \ref{l8} implies
$$|v_{+,R}-v+\ep|_{1,\f{R}{2}}\leq \g_1(v_{+,R}-v+\ep)_{-,\f{R}{2}}$$
where $\g_1$ is a positive constant only depending on $K_1$ and
$K_2$. This is equivalent to $v_{+,R}-\bar v_{\f{R}{2}}+\ep\leq
\g_1(v_{+,R}-v_{+,\f{R}{2}}+\ep)$; letting $\ep\ra 0$ yields
$$v_{+,R}-\bar v_{\f{R}{2}}\leq \g_1(v_{+,R}-v_{+,\f{R}{2}}).$$
(\ref{ineq6}) follows  by
putting $\de_0=\f{1}{\g_1}$.
\end{proof}
The next result is proved as in \cite{J}:
\begin{cor}\label{c1}
Let $v$ be as in Corollary \ref{l4}, and
suppose $0<\ep<\f{1}{2}$. There exists $k\in \Bbb{N}$,
independent of $v$ and $\ep$, such that
\begin{equation}
v_{+,\ep^k R}\leq \ep^2 v_{+,R}+(1-\ep^2)\bar v_{R'}
\end{equation}
for some $R'$ with $\ep^k R\leq R'\leq \f{R}{2}$ ($R'$ may depend on $v$ and $\ep$).
\end{cor}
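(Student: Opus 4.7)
The plan is to iterate Corollary \ref{l4} on the sequence of halved balls $B_R, B_{R/2}, B_{R/4},\dots$ and then collapse the resulting finite sum of averages into a single one. Applying Corollary \ref{l4} to $v$ on $B_{R/2^{j-1}}$ gives, for each $j\geq 1$,
\begin{equation*}
v_{+,R/2^j}\;\leq\;(1-\de_0)\,v_{+,R/2^{j-1}}+\de_0\,\bar v_{R/2^j}.
\end{equation*}
Iterating this $N$ times and using that the geometric weights $\de_0(1-\de_0)^{N-j}$ ($j=1,\dots,N$) sum to $1-(1-\de_0)^N$, one obtains
\begin{equation*}
v_{+,R/2^N}\;\leq\;(1-\de_0)^N\,v_{+,R}+\bigl(1-(1-\de_0)^N\bigr)\,\bar v_{R'},
\end{equation*}
where $R':=R/2^{j_0}$ is chosen by picking $j_0\in\{1,\dots,N\}$ to maximize $\bar v_{R/2^j}$ over $j\leq N$; in particular $R/2^N\leq R'\leq R/2$.

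Next I fix $k:=\lceil 1+2\log 2/(-\log(1-\de_0))\rceil$, which depends only on $\de_0$ (hence only on $K_1,K_2$) and so is independent of both $v$ and $\ep$. Given $\ep\in(0,\f{1}{2})$, I set $N:=\lfloor k\log_2(1/\ep)\rfloor$; since $\ep<1/2$ one has $N\geq k\geq 1$, and the definition of $N$ gives $\ep^k R\leq R/2^N\leq R/2$. A short computation shows that this choice of $k$ is exactly what is needed to guarantee $(1-\de_0)^N\leq \ep^2$ uniformly in $\ep\in(0,\f12)$: starting from $N\geq k\log_2(1/\ep)-1$ one rewrites $(1-\de_0)^N\leq (1-\de_0)^{-1}\ep^{k\gamma}$ with $\gamma=-\log_2(1-\de_0)>0$, and the condition $k\geq 1+2/\gamma$ together with $\ep<1/2$ absorbs the stray factor $(1-\de_0)^{-1}$.

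To finish, note $B_{R'}\subset B_R$ forces $\bar v_{R'}\leq v_{+,R}$. Rewriting the telescoped bound as $\bar v_{R'}+(1-\de_0)^N(v_{+,R}-\bar v_{R'})$ and replacing $(1-\de_0)^N$ by the larger number $\ep^2$ (legitimate because $v_{+,R}-\bar v_{R'}\geq 0$) gives
\begin{equation*}
v_{+,R/2^N}\;\leq\;\ep^2\,v_{+,R}+(1-\ep^2)\,\bar v_{R'},
\end{equation*}
and the inclusion $B_{\ep^k R}\subset B_{R/2^N}$ yields $v_{+,\ep^k R}\leq v_{+,R/2^N}$, which is the desired inequality. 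The only genuinely delicate step is isolating a single integer $k=k(\de_0)$ that is uniform in $\ep\in(0,\f12)$; everything else reduces to the telescoping identity for geometric weights and the monotonicity of a convex combination in its weight.
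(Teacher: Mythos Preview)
Your proof is correct and follows the standard iteration approach that the paper defers to \cite{J}: repeatedly apply Corollary~\ref{l4} on the dyadic sequence of radii, telescope the resulting geometric sum, replace the convex combination of averages by the largest one, and finally choose $k$ so that $(1-\de_0)^N\le\ep^2$ uniformly in $\ep\in(0,\tfrac12)$. There is nothing to add; your computation of $k=\lceil 1+2/\gamma\rceil$ with $\gamma=-\log_2(1-\de_0)$ and the monotonicity argument passing from $(1-\de_0)^N$ to $\ep^2$ via $v_{+,R}\ge\bar v_{R'}$ are exactly the ingredients the reference uses.
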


\medskip

\subsection{Mollified Green function}

Obviously, $b:H_0^{1,2}(\Om)\times H_0^{1,2}(\Om)\ra \R$ defined
by
$$(\phi,\psi)\mapsto \int_{\Om}\n\phi\cdot\n\psi*1$$
is a bounded, positive definite bilinear form. For arbitrary $y\in \Om$ and $\rho>0$
such that $B_\rho(y)\subset \Om$,
$$\phi\in H_0^{1,2}(\Om)\mapsto \aint{B_\rho(y)}\phi$$
is a bounded linear functional. By the  Lax-Milgram Theorem, there exists a unique function
$G^\rho(\cdot,y)\in H_0^{1,2}(\Om)$, such that for all $\phi\in H_0^{1,2}(\Om)$,
\begin{equation}\label{gr}
\int_{\Om}\n G^\rho(\cdot,y)\cdot \n\phi*1=\aint{B_\rho(y)}\phi.
\end{equation}
$G^\rho$ is called the \textbf{mollified Green function} with respect
to the Laplace-Beltrami operator on
$\Om$. We can follow \cite{g-w} and \cite{b-m} to obtain
estimates on $G^\rho$, to be used in the next paragraphs. (Riemannian
manifolds satisfying the DVP-
condition  are certain metric spaces  (homogeneous
spaces in the sense of  \cite{Co} Ch. III, Section 1) on which a
weak version of the Poincar\'{e} inequality holds, and $(u,v)\mapsto
\int_M \n u\cdot\n v*1$ is a Dirichlet form. Hence the
results in \cite{b-m} can be applied.)

\begin{lem}

Let $M$ be an $m$-dimensional Riemannian manifold satisfying DVP-condition, $R\in(0, \f{1}{3}R_0]$
satisfying $B_{3R}\subset\subset M$. Then
the mollified Green function $G^\rho$  on $B_R$ enjoys the following properties:
\begin{equation}\label{gr1}
G^\rho(\cdot,y)\leq C_1\f{R^2}{V(R)}\qquad \mbox{on }S_R:=B_R-\bar{B}_{\f{3R}{8}}
\end{equation}
and
\begin{equation}\label{gr2}
\int_{T_R}|\n G^\rho(\cdot,y)|^2*1\leq C_2\f{R^2}{V(R)}\qquad\mbox{on }T_R:=B_R-\bar{B}_{\f{R}{2}}
\end{equation}
for all $y\in B_{\f{R}{4}}$ and $\rho\leq \f{R}{8}$.
Here $C_1,C_2$ are  positive constants depending only on $K_1$ and $K_2$.

\end{lem}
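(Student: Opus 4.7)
The plan is to follow the Gr\"uter--Widman strategy \cite{g-w}, as extended to metric-measure spaces satisfying doubling and Poincar\'e inequalities by Biroli--Mosco \cite{b-m}. The DVP conditions supply all the required ingredients: the Sobolev inequality (\ref{con2}), an $L^\infty$--$L^p$ estimate for non-negative subharmonic functions obtained from Moser iteration, the Harnack inequality of Lemma \ref{l8}, and the doubling property (V). The key geometric fact, used repeatedly, is that $y\in B_{R/4}$ and $\rho\leq R/8$ force $B_\rho(y)\subset B_{3R/8}$ by the triangle inequality, so the source in (\ref{gr}) is supported well inside $B_{3R/8}$ and far from both $S_R$ and $T_R$.

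Two preliminaries come first. Testing (\ref{gr}) with $\phi=(G^\rho(\cdot,y))_-$ and noting that the right-hand side is non-negative on non-negative test functions yields $G^\rho(\cdot,y)\geq 0$ on $B_R$. Moreover, for any $\phi\in H^{1,2}_0(B_R)$ supported in $B_R\setminus \bar B_{3R/8}$ the right-hand side of (\ref{gr}) vanishes; hence $G^\rho(\cdot,y)$ is weakly harmonic, and in particular non-negative and subharmonic, on $B_R\setminus\bar B_{3R/8}\supset S_R\cup T_R$.

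For (\ref{gr1}) I would establish the sharper pointwise bound
\begin{equation*}
G^\rho(x,y)\leq C\,\frac{d(x,y)^2}{V(y,d(x,y))}\qquad\text{whenever } d(x,y)\geq 2\rho,
\end{equation*}
via Moser iteration applied to the subharmonic function $G^\rho(\cdot,y)$ on a ball $B_{d(x,y)/4}(x)$ contained in the harmonicity region. The sub-mean-value inequality, combined with an integrated bound on $G^\rho$ over a shell around $y$ (obtained by testing (\ref{gr}) against a suitable cut-off and using (\ref{con2})), produces the correct dimensional balance between distance and volume. For $x\in S_R$ one has $R/8\leq d(x,y)\leq 5R/4$, so the bound applies, and iterated doubling converts $V(y,d(x,y))$ into a quantity comparable to $V(R)$, since $B_R(y_0)\subset B_{5R/4}(y)$. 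This yields (\ref{gr1}). Then (\ref{gr2}) follows from a Caccioppoli argument: plug $\phi=\eta^2 G^\rho(\cdot,y)$ into (\ref{gr}), where $\eta\equiv 1$ on $T_R$, $\eta\equiv 0$ on $B_{3R/8}$ and outside $B_R$, and $|\nabla\eta|\leq C/R$. Since $\eta$ vanishes on $B_\rho(y)$, the source term vanishes, and a standard Young-inequality manipulation gives
\begin{equation*}
\int_{T_R}|\nabla G^\rho|^2\,*1\leq \frac{C}{R^2}\int_{S_R}(G^\rho)^2\,*1;
\end{equation*}
inserting (\ref{gr1}) on the right-hand side and using doubling delivers (\ref{gr2}).

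The main obstacle is the Moser iteration that produces the pointwise Green's function bound driving (\ref{gr1}). The iteration must be carried out intrinsically on the metric-measure space, with no Euclidean structure to fall back on, and the correct scaling — quadratic in $d(x,y)$ and inversely proportional to $V(y,d(x,y))$ — must emerge as the dimensional balance between the Sobolev constant from (\ref{con2}), the Caccioppoli constants, and the normalization of the right-hand side of (\ref{gr}). What makes the argument uniform in $\rho$, and ultimately yields estimates on $S_R$ and $T_R$ that do not degenerate as $\rho\to 0$, is precisely the fact that the source $V(y,\rho)^{-1}\chi_{B_\rho(y)}$ is confined to a ball of radius $\rho\leq R/8$, so one can always test against cut-offs supported outside $B_\rho(y)$.
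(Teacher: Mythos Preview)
Your proposal is correct; the strategy for (\ref{gr2}) is exactly what the paper does. For (\ref{gr1}), however, the paper takes a somewhat different route. Rather than running Moser iteration from scratch to produce the pointwise bound $G^\rho(x,y)\leq C\,d(x,y)^2/V(y,d(x,y))$, the paper imports the Biroli--Mosco estimates directly for the mollified Green function $G^\rho_{B_{5R/4}(y)}$ on the \emph{larger} ball $B_{5R/4}(y)\supset B_R$, obtaining $\sup_{\partial B_{R/8}(y)}G^\rho_{B_{5R/4}(y)}(\cdot,y)\leq c\,R^2/V(y,R/8)$. Harmonicity of that function outside $B_\rho(y)$ plus the maximum principle pushes this bound to all of $S_R$, and then a second maximum-principle comparison (the difference $G^\rho-G^\rho_{B_{5R/4}(y)}$ is harmonic on $B_R$ and $\leq 0$ on $\partial B_R$) transfers the estimate to $G^\rho$ itself. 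Doubling converts $V(y,R/8)$ to $V(R)$.

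The trade-off: the paper's comparison argument is shorter and sidesteps any difficulty near $\partial B_R$, at the cost of quoting \cite{b-m} as a black box. Your approach is more self-contained but needs one extra remark you did not make: for $x$ close to $\partial B_R$ the ball $B_{d(x,y)/4}(x)$ can leak out of $B_R$, so it is not literally ``contained in the harmonicity region.'' This is easily repaired---extend $G^\rho(\cdot,y)\in H^{1,2}_0(B_R)$ by zero to $B_{3R}$ and observe that the extension is globally subharmonic, or, like the paper, pass to a larger ball from the start---but as written the sentence is slightly inaccurate.
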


\begin{proof}

$y\in B_{\f{R}{4}}$ implies $B_R\subset B_{\f{5R}{4}}(y)$; since
$B_{\f{5R}{2}}(y)\subset B_{\f{11R}{4}}\subset
B_{3R}\subset\subset M$, one can apply  (6.13)-(6.15) in
\cite{b-m} to obtain
\begin{equation}
\sup_{\p B_{\f{R}{8}}(y)}G_{B_{\f{5R}{4}}(y)}^\rho(\cdot,y)\leq c_6(K_1,K_2)\f{R^2}{V(y,\f{R}{8})}.
\end{equation}
Here $G_{B_{\f{5R}{4}}(y)}^\rho$ denotes the mollified Green
function on $B_{\f{5R}{4}}(y)$, which is harmonic on
$B_{\f{5R}{4}}(y)-B_\rho(y).$ Hence the maximal principle implies
for each $z\in S_R\subset B_{\f{5R}{4}}(y)-B_{\f{R}{8}}(y)$,
\begin{equation}\label{bm}
G_{B_{\f{5R}{4}}(y)}^\rho(z,y)\leq \sup_{\p B_{\f{R}{8}}(y)}G_{B_{\f{5R}{4}}(y)}^\rho(\cdot,y)
\leq c_6(K_1,K_2)\f{R^2}{V(y,\f{R}{8})}
\end{equation}
Noting that $G_{B_{\f{5R}{4}}(y)}^\rho$
and $G^\rho$ are all nonnegative (which can be seen by a simple truncation argument, see \cite{g-w}) and
$G_{B_{\f{5R}{4}}(y)}^\rho-G^\rho$ is harmonic on $B_R$, again using the maximal principle yields
$$G^\rho(z,y)-G_{B_{\f{5R}{4}}(y)}^\rho(z,y)\leq \sup_{\p B_R}\big[G^\rho(\cdot,y)-G_{B_{\f{5R}{4}}(y)}^\rho(\cdot,y)\big]\leq 0.$$
Hence (\ref{gr1}) immediately follows from (\ref{bm}) and
$$V(y,\f{R}{8})\geq K_1^{-4}V(y,\f{5R}{4})\geq K_1^{-4}V(R).$$

As in \cite{g-w}, we choose a cut-off function $\eta$ satisfying $\eta\equiv 1$ in $T_R$,
$\eta\equiv 0$ in $B_{\f{3R}{8}}$ and $|\n \eta|\leq \f{c_0}{R}$, and insert $G^\rho(\cdot,y)\eta^2$ into
(\ref{gr}). (\ref{gr2}) then follows from (\ref{gr1}).

\end{proof}

\begin{lem}\label{l6}
Let $M$ be an $m$-dimensional Riemannian manifold satisfying DVP-condition, $R\in(0, \f{1}{2}R_0]$
satisfying $B_{2R}\subset\subset M$. With
\begin{equation}\label{om}
\om^R:=\f{V(\f{R}{2})}{R^2}G^{\f{R}{2}}(\cdot,y_0),
\end{equation}
then
\begin{equation}\label{gr3}
\om^R\leq C_3\qquad \mbox{on }B_R
\end{equation}
and
\begin{equation}\label{gr4}
\om^R\geq C_4\qquad \mbox{on }B_{\f{R}{2}},
\end{equation}
where $C_3$ and $C_4$ are positive constants depending only on
$K_1$ and $K_2$, but not depending on $R$.
\end{lem}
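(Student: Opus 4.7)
The idea is to read both (\ref{gr3}) and (\ref{gr4}) off the Poisson equation satisfied by $u := G^{R/2}(\cdot,y_0)$: the defining relation (\ref{gr}) is equivalent to saying that $u\in H_0^{1,2}(B_R)$ is a weak solution of $-\De u = f$ with the bounded right-hand side $f := V(R/2)^{-1}\chi_{B_{R/2}}$, which in particular satisfies $\|f\|_{L^\infty(B_R)} = V(R/2)^{-1}$. I will combine the two core tools available from the DVP-condition: the weak Harnack inequality in Lemma \ref{l8} and the Sobolev inequality (\ref{sobolev}).

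For the lower bound (\ref{gr4}), I first plan to obtain an averaged lower bound on $u$ over $B_{R/2}$. Testing (\ref{gr}) against $\phi = u$ gives the energy identity $\int_{B_R}|\n u|^2 *1 = \aint{B_{R/2}} u$, while testing against a cutoff $\eta \in H_0^{1,2}(B_R)$ with $\eta \equiv 1$ on $B_{R/2}$, support in $B_{3R/4}$ and $|\n \eta| \leq c_0 R^{-1}$ (constructed as in Remark 4.1) gives $1 = \int_{B_R} \n u \cdot \n \eta *1$. Cauchy--Schwarz together with the doubling property (\ref{double}) then yield the reverse inequality
$$\aint{B_{R/2}} u \;=\; \int_{B_R}|\n u|^2 *1 \;\geq\; \f{c\,R^2}{V(R/2)}.$$
Since $u$ is a positive superharmonic function on $B_R$ (by $-\De u = f \geq 0$), Lemma \ref{l8} applied with $p=1$ and $\th = \f{1}{2}$ upgrades this averaged lower bound to the pointwise estimate $\inf_{B_{R/2}} u \geq \g_1^{-1}c\,R^2/V(R/2)$, which is precisely (\ref{gr4}).

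For the upper bound (\ref{gr3}), I plan to run a Moser iteration on $-\De u = f$: test the equation against $u^{2\be-1}\zeta^2$ for $\be \geq 1$ with Lipschitz cutoffs $\zeta$ as in Remark 4.1, and iterate the resulting $L^p$-bounds using (\ref{sobolev}). The target is the standard sup-estimate $\|u\|_{L^\infty(B_R)} \leq C R^2 \|f\|_{L^\infty(B_R)}$, which combined with $\|f\|_\infty = V(R/2)^{-1}$ yields (\ref{gr3}). The main obstacle is precisely this Moser iteration for the inhomogeneous Poisson equation in the abstract DVP metric setting: one must carefully track how the right-hand side $f$ propagates through the iterated Sobolev-energy estimates, so as to end up with a bound of the correct homogeneity $R^2\|f\|_\infty$ rather than one still involving an unfavorable $L^p$-energy of $u$ itself. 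By contrast, once Lemma \ref{l8} is in hand, the lower bound reduces to the two clean test-function computations above.
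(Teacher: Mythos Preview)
Your proposal is correct, and it is actually more self-contained than the paper's own proof, which simply invokes the Biroli--Mosco reference \cite{b-m}: for (\ref{gr3}) the paper applies Theorem 4.1 of \cite{b-m} (the global $L^\infty$ bound for Dirichlet solutions of $-\De u=f$ with bounded $f$), and for (\ref{gr4}) it cites the pointwise Green-function estimates (6.13)--(6.15) of \cite{b-m} on $\p B_{R/2}$ and then uses that $G^{R/2}(\cdot,y_0)$, being superharmonic on $B_R$, attains its minimum over $\bar B_{R/2}$ on the boundary. Your argument for (\ref{gr4}) is a genuinely nicer alternative: the two test-function identities together with Cauchy--Schwarz and doubling give the averaged lower bound $\aint{B_{R/2}}u\ge cR^2/V(R/2)$ directly, and then Lemma \ref{l8} (already available in the paper) converts this into a pointwise bound, avoiding any appeal to the finer Green-function asymptotics in \cite{b-m}. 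For (\ref{gr3}) your plan amounts to reproving Theorem 4.1 of \cite{b-m} from the Sobolev inequality (\ref{sobolev}); this works, but note one small point: Moser iteration with cutoffs $\zeta$ as you describe only yields an interior bound such as $\sup_{B_{R/2}}u$, not $\sup_{B_R}u$. You close this either by observing that $u$ is harmonic on $B_R\setminus \bar B_{R/2}$ with zero boundary trace on $\p B_R$, so the weak maximum principle gives $\sup_{B_R}u=\sup_{\bar B_{R/2}}u$, or by dispensing with cutoffs altogether and testing with truncations of $u^{2\be-1}\in H_0^{1,2}(B_R)$ directly (the Sobolev inequality (\ref{sobolev}) is stated precisely for $v\in H_0^{1,2}(B_R)$). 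Either way the global bound follows, and the starting $L^2$ estimate needed to launch the iteration drops out of $\int|\n u|^2=\int fu$ combined with (\ref{sobolev}).
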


\begin{proof}
(\ref{gr}) and (\ref{om}) imply that
\begin{equation}\label{om2}
\int_{B_R}\n \om^R\cdot \n\phi*1=\f{1}{R^2}\int_{B_{\f{R}{2}}}\phi*1=\int_{B_R}\f{1}{R^2}\mathbf{1}_{B_{\f{R}{2}}}\cdot \phi*1
\end{equation}
holds for every $\phi\in H_0^{1,2}(B_R)$. Then applying Theorem
4.1 in \cite{b-m}  yields
\begin{equation}
\sup_{B_R}\om^R\leq C_3R^2\sup_{B_R}\Big(\f{1}{R^2}\mathbf{1}_{B_{\f{R}{2}}}\Big)=C_3.
\end{equation}

By  (6.13)-(6.15) in \cite{b-m},
\begin{equation}
\inf_{\p B_{\f{R}{2}}}G^{\f{R}{2}}(\cdot,y_0)\geq C_4\f{R^2}{V(\f{R}{2})}.
\end{equation}
Since $G^{\f{R}{2}}(\cdot,y_0)$ is a superharmonic function, it
assumes its minimum in $\bar{B}_{\f{R}{2}}$ at the boundary;
therefore,
(\ref{gr4}) immediately follows.

\end{proof}

\medskip

\subsection{Telescoping lemma}

Based on (\ref{subhar}) and Corollary \ref{l4}, we can obtain a version of the
telescoping lemma of Giaquinta-Giusti \cite{g-g} and
Giaquinta-Hildebrandt \cite{g-h} as in \cite{J}.

\begin{lem}\label{l5}

Let $(M^m,g)$ be a Riemannian manifold satisfying
DVP-condition, $(N^n,h)$ be a Riemannian
manifold, $u\in H_{loc}^{1,2}(M,N)$ be a weakly harmonic map, $K$ be a
compact domain of $N$, and let there exist a smooth and strictly convex function $F$ on
$K$ such that $\Hess\ F\geq K_0\ h$. If there is $R_1\in (0,\f{1}{2}R_0]$, such that
$B_{2R_1}\subset\subset M$ and $u(B_{R_1})\subset K$, then there is a positive constant $C_5$, only depending on
$K_0,K_1$ and $K_2$, such that for arbitrary $R\leq R_1$
\begin{equation}\label{tele}
\f{R^2}{V(\f{R}{2})}\int_{B_{\f{R}{2}}}|du|^2*1\leq C_5(f_{+,R}-f_{+,\f{R}{2}}).
\end{equation}
Here $f=F\circ u$. Moreover, there exists a positive constant $C_6$,
only depending on $K_0,K_1,K_2$ and $\sup_K F-\inf_K F$,
with the property that for arbitrary $\ep>0$, we can find $R\in
[\exp(-C_6\ep^{-1})R_1,R_1]$ such that
\begin{equation}
\f{R^2}{V(\f{R}{2})}\int_{B_{\f{R}{2}}}|d u|^2*1\leq \ep.
\end{equation}
\end{lem}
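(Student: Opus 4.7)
The plan is to prove the two assertions in sequence, marrying the weak subharmonicity inequality (\ref{subhar}) for $f=F\circ u$ with the mollified Green function bounds from Lemma \ref{l6} and the Harnack-type bound of Corollary \ref{l4}. Note $f\in H^{1,2}(B_R)\cap L^\infty(B_R)$ because $u(B_{R_1})\subset K$ and $F$ is smooth on $K$.

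For (\ref{tele}) I would use $\eta=G^{R/2}(\cdot,y_0)$ as a test function in (\ref{subhar}). Since $\eta$ is nonnegative, bounded, and in $H_0^{1,2}(B_R)$, a density argument yields
\begin{equation*}
K_0\int_{B_R} G^{R/2}(\cdot,y_0)\,|du|^2*1 \leq -\int_{B_R}\n G^{R/2}(\cdot,y_0)\cdot\n f*1.
\end{equation*}
To compute the right hand side, I would introduce the harmonic extension $h\in H^{1,2}(B_R)$ of the trace $f|_{\p B_R}$, so that $h-f\in H_0^{1,2}(B_R)$. Plugging $\phi=h-f$ into (\ref{gr}) and using that the harmonicity of $h$ forces $\int\n h\cdot\n G^{R/2}(\cdot,y_0)*1=0$ gives
\begin{equation*}
-\int_{B_R}\n G^{R/2}(\cdot,y_0)\cdot\n f*1 \;=\; \bar h_{\f{R}{2}}-\bar f_{\f{R}{2}} \;\leq\; f_{+,R}-\bar f_{\f{R}{2}},
\end{equation*}
where the last step is the maximum principle $h\leq f_{+,R}$. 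Combining with the pointwise lower bound $G^{R/2}(\cdot,y_0)\geq C_4 R^2/V(\f{R}{2})$ on $B_{\f{R}{2}}$ from Lemma \ref{l6}, and invoking Corollary \ref{l4} to upgrade $f_{+,R}-\bar f_{\f{R}{2}}$ to $\de_0^{-1}(f_{+,R}-f_{+,\f{R}{2}})$, one arrives at (\ref{tele}) with $C_5=(K_0 C_4 \de_0)^{-1}$, depending only on $K_0,K_1,K_2$.

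For the second assertion, I would iterate (\ref{tele}) along the dyadic radii $R_k:=2^{-k}R_1$; the supremum differences on the right hand side telescope, yielding
\begin{equation*}
\sum_{k=0}^{N-1}\f{R_k^2}{V(R_{k+1})}\int_{B_{R_{k+1}}}|du|^2*1 \;\leq\; C_5\bigl(f_{+,R_0}-f_{+,R_N}\bigr) \;\leq\; C_5(\sup_K F-\inf_K F).
\end{equation*}
By pigeonholing, at least one term is bounded by $C_5(\sup_K F-\inf_K F)/N$, which can be forced to be at most $\ep$ by choosing $N=\lceil C_5(\sup_K F-\inf_K F)/\ep\rceil$. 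The corresponding radius satisfies $R_{k^*}\geq 2^{-N}R_1\geq R_1\exp(-C_6\ep^{-1})$ for a suitable constant $C_6=C_6(K_0,K_1,K_2,\sup_K F-\inf_K F)$.

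The chief difficulty lies in the first step: rigorously justifying $G^{R/2}(\cdot,y_0)$ as an admissible test function in (\ref{subhar}) by approximation with smooth compactly supported sections, together with the existence of the harmonic extension $h$ and the maximum principle bound $h\leq f_{+,R}$ in the Dirichlet-form framework of \cite{b-m}. Once these technicalities are dispatched, the Green function lower bound of Lemma \ref{l6}, Corollary \ref{l4}, and the clean telescoping argument accomplish the rest.
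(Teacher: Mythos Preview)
Your argument is correct and reaches the same intermediate estimate
\[
\f{R^2}{V(\f{R}{2})}\int_{B_{R/2}}|du|^2*1 \;\lesssim\; f_{+,R}-\bar f_{R/2}
\]
that the paper obtains, after which both proofs finish identically via Corollary~\ref{l4} and the dyadic telescoping sum.

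The difference is only in how this intermediate bound is reached. You insert $G^{R/2}(\cdot,y_0)$ itself into (\ref{subhar}) and, since $f\notin H_0^{1,2}(B_R)$, introduce the harmonic extension $h$ of the boundary trace of $f$ so that (\ref{gr}) applies to $\phi=h-f$; the weak maximum principle then converts $\bar h_{R/2}$ into $f_{+,R}$. The paper instead uses the \emph{square} $(\om^R)^2$ of the normalized Green function as test function and sets $v=f-f_{+,R}\le0$. Writing
\[
-\int\n(\om^R)^2\cdot\n v=-2\int\n\om^R\cdot\n(\om^R v)+2\int v\,|\n\om^R|^2,
\]
the last term is discarded because $v\le0$, and the first is evaluated by the Green identity (\ref{om2}) applied directly to $\om^R v\in H_0^{1,2}(B_R)$, giving $-\f{2}{R^2}\int_{B_{R/2}}\om^R v$, which is then controlled via the \emph{upper} bound $\om^R\le C_3$ of Lemma~\ref{l6}. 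This squaring device (from Giaquinta--Giusti and Giaquinta--Hildebrandt) bypasses the harmonic extension and the maximum principle altogether, at the price of needing both the upper and lower bounds on $\om^R$; your route uses only the lower bound $C_4$ but requires the Dirichlet-form maximum principle you flag at the end. Both are standard and lead to constants depending only on $K_0,K_1,K_2$.
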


The telescoping lemma will be so powerful for our purposes because it
does not require an energy bound on our weakly harmonic map. Instead,
the energy of $u$ is locally controlled by the oscillation of its
composition with the strictly convex function $F$, essentially via a
lower bound on the Hessian of $F$. (In general, when applying this
scheme, one will also need  an
upper bound for the gradient of the strictly convex function $F$ in order to relate the oscillation of
$F\circ u$ to the one of $u$ itself. In the situation of the present
paper, this will be implicitely contained in the geometry of the
sphere and therefore not come up as an issue.)

\begin{proof}
$v:=f-f_{+,R}$ satisfies $v\leq 0$. Choosing $(\om^R)^2 \in H_0^{1,2}(B_R)$ as a test function in (\ref{subhar})
($\om^R$ is defined in (\ref{om})), we obtain
$$\aligned
K_0\int_{B_{R}}|du|^2(\om^R)^2*1&\leq -\int_{B_{R}}\n(\om^R)^2\cdot \n v*1=-2\int_{B_{R}}\n\om^R\cdot \om^R\n v*1\\
&=-2\int_{B_{R}}\n\om^R\cdot\big(\n(\om^R v)-v\n \om^R\big)*1\leq -2\int_{B_{R}}  \n\om^R\cdot\n(\om^R v)*1\\
&=-\f{2}{R^2}\int_{B_{\f{R}{2}}}\om^R v*1\leq
-\f{2C_3}{R^2}\int_{B_{\f{R}{2}}}v*1.
\endaligned$$
Here we have used (\ref{om2}) and the pointwise estimates for $\om^R$ in Lemma \ref{l6}.
On the other hand,
$$
\int_{B_R}|du|^2(\om^R)^2*1\geq C_4^2\int_{B_{\f{R}{2}}}|du|^2*1.
$$
Hence
\begin{equation}\label{es1}\aligned
\int_{B_{\f{R}{2}}}|du|^2*1&\leq \f{2C_3}{K_0C_4^2R^2}\int_{B_{\f{R}{2}}}(f_{+,R}-f)*1\\
&=c_7(K_0,K_1,K_2)\f{V(\f{R}{2})}{R^2}(f_{+,R}-\bar f_{\f{R}{2}}).
\endaligned
\end{equation}
By Corollary \ref{l4}, $f_{+,R}-\bar f_{\f{R}{2}}\leq
\de_0^{-1}(f_{+,R}-f_{+,\f{R}{2}})$. Substituting it into
(\ref{es1}) yields (\ref{tele}).

For arbitrary $k\in \Bbb{N}$, (\ref{tele}) tells us
\begin{equation}\aligned
\sum_{i=0}^k \f{(2^{-i}R_1)^2}{V(2^{-i-1}R_1)}\int_{B_{2^{-i-1}R_1}}|du|^2*1&\leq C_5\sum_{i=0}^k (f_{+,2^{-i}R_1}-f_{+,2^{-i-1}R_1})\\
                                                        &=C_5(f_{+,R_1}-f_{+,2^{-k-1}R_1})\\
                                                        &\leq C_5(\sup_K F-\inf_K F)
\endaligned
\end{equation}
For arbitrary $\ep>0$, we take
\begin{equation}
k:=\big[C_5(\sup_K F-\inf_K F)\ep^{-1}\big].
\end{equation}
Here and in the sequel, $[x]$ denotes the greatest integer not larger
than
$x.$ Then we can find $j$ with $0\leq j\leq k,$ such that
\begin{equation}
\f{(2^{-j}R_1)^2}{V(2^{-j-1}R_1)}\int_{B_{2^{-j-1}R_1}}|du|^2*1\leq \f{1}{k+1}C_5(\sup_K
F-\inf_K F)\leq \ep.
\end{equation}
Since $2^{-j}\geq 2^{-k}\geq 2^{-C_5(\sup_K F-\inf_K
F)\ep^{-1}}=\exp\big(-(\log 2) C_5(\sup_K F-\inf_K F)\ep^{-1}\big)$,
it is sufficient to take $C_6= C_5\log 2(\sup_K F-\inf_K F)$.
\end{proof}
\bigskip\bigskip

\Section{Regularity of weakly harmonic maps and Liouville type
theorems}{Regularity of weakly harmonic maps and Liouville type
theorems}\label{s3}

\subsection{Pointwise estimates}

$M,N,u,K$ and $R_0$ are as in Lemma \ref{l5}. Now we assume that there exists $R_1\in
(0,\f{1}{3}R_0]$ with $B_{3R_1}\subset\subset M$ and $u(B_{R_1})\subset K$.
Let $H$ be a smooth function on $K$, $\eta$ be a non-negative
smooth function on $B_{R_1}$ with compact support and $\varphi$
be a $H^{1,2}$-function on $B_{R_1}$. Denoting
\begin{equation}
\xi(y):=\eta(y)\varphi(y)\n^N H(u(y)),
\end{equation}
then similar to (\ref{har})-(\ref{har2}), we have
\begin{equation}
\int_{B_{R_1}} \varphi\n\eta\cdot \n h*1+\int_{B_{R_1}} \eta\n\varphi\cdot \n
h*1+\int_{B_{R_1}} \eta\varphi\De h*1=0
\end{equation}
where $h=H\circ u$. It implies
\begin{equation}\label{har3}\aligned
\int_{B_{R_1}} \n\varphi\cdot \n(\eta h)*1=&-\int_{B_{R_1}} \varphi\n\eta\cdot \n h*1-\int_{B_{R_1}} \eta\varphi\De h*1\\
&+\int_{B_{R_1}}h\n\varphi\cdot\n\eta*1.
\endaligned
\end{equation}
For arbitrary $R\leq \f{1}{2}R_1$, we can take a cut-off function
$\eta$ with the support in the interior of $B_R$, $0\leq \eta\leq
1$, $\eta\equiv 1$ on $B_{\f{R}{2}}$ and $|\n\eta|\leq
\f{c_0}{R}$. For each $\rho\leq \f{R}{8}$, denote by $G^\rho$ the
mollified Green function on $B_R$. Then by inserting
$\varphi=G^\rho(\cdot,y)$ into (\ref{har3}), where $y$ is an
arbitrary point in $B_{\f{R}{4}}$, we have
\begin{equation}\label{har4}\aligned
\int_{B_R}\n G^\rho(\cdot,y)\cdot\n(\eta h)*1=&-\int_{B_R}G^\rho(\cdot,y)\n\eta\cdot \n h*1\\
                                              &-\int_{B_R}\eta G^\rho(\cdot,y)\De h*1\\
                                              &+\int_{B_R}h \n G^\rho(\cdot,y)\cdot\n\eta*1.
\endaligned
\end{equation}
We write (\ref{har4}) as
\begin{equation}
I_\rho=II_\rho+III_\rho+IV_\rho.
\end{equation}

By (\ref{gr}), we arrive at
\begin{equation}\label{es10}
I_\rho=\aint{B_\rho(y)}\eta h=\aint{B_\rho(y)}h.
\end{equation}
If we choose for $h$ its Lebesgue representative, then we can find a
subsequence of the $\rho$s with the property that
\begin{equation}\label{es12}
\lim_{\rho\ra 0}I_\rho=h(y).
\end{equation}

Put $T_R:=B_R-\bar{B}_{\f{R}{2}}$. Since $\n\eta\equiv 0$ outside
$T_R$,
\begin{equation}\label{es20}\aligned
|II_\rho|=&\Big|\int_{T_R}G^\rho(\cdot,y)\n\eta\cdot \n h*1\Big|
\leq\int_{T_R}G^\rho(\cdot,y)|\n\eta||\n h|*1\\
\leq&c_0R^{-1}\sup_K|\n^N H|\int_{T_R}G^\rho(\cdot,y)|d u|*1
\endaligned
\end{equation}
By (\ref{gr1}), $G^\rho(\cdot,y)\leq C_1\f{R^2}{V(R)}$ on $T_R$,
hence
\begin{equation}\label{es2}\aligned
|II_\rho|&\leq C\sup_K|\n^N H|\f{R}{V(R)}\int_{T_R}|d u|*1\\
&\leq C\sup_K|\n^N H|\f{R}{V(R)}\Big(\int_{T_R}|d u|^2*1\Big)^{\f{1}{2}}\V(T_R)^{\f{1}{2}}\\
&\leq c_1(K_1,K_2)\sup_K |\n^N H|\Big(\f{R^2}{V(R)}\int_{B_R}|d u|^2*1\Big)^{\f{1}{2}}
\endaligned
\end{equation}

Obviously
\begin{equation}\label{es3}
III_\rho=-\int_{T_R}\eta G^\rho(\cdot,y)\De h*1-\int_{B_{\f{R}{2}}}G^\rho(\cdot,y)\De h*1.
\end{equation}
According to (\ref{gr1}) and  (\ref{tele}) in Lemma \ref{l5},
\begin{equation}\label{es31}\aligned
\Big|-\int_{T_R}\eta G^\rho(\cdot,y)\De h*1\Big|&=\Big|\int_{T_R}\eta G^\rho(\cdot,y)\Hess\ H(u_*e_\a,u_*e_\a)*1\Big|\\
&\leq \sup_K|\Hess\ H|\int_{T_R}G^\rho(\cdot,y)|d u|^2*1\\
&\le C_1\sup_K|\Hess\ H|\f{R^2}{V(R)}\int_{B_R}|d u|^2*1\\
&\leq c_2\sup_K|\Hess\ H|\Big(\f{R^2}{V(R)}\int_{B_R}|d
u|^2*1\Big)^{\f{1}{2}}.
\endaligned
\end{equation}
Here $c_2$ is a positive constant depending on
$K_0,K_1,K_2$ and $\sup_K F-\inf_K F$.

Additionally assume that $K\subset N$ is diffeomorphic to a convex
domain $V$ of $\R^n$, $\chi$ denotes the diffeomorphism from $V$ to
$K$; and suppose that there exist positive constants $K_3$ and $K_4$, such
that for arbitrary $X\in TV$,
\begin{equation}\label{bound}
K_3|X|\leq |\chi_*(X)|\leq K_4|X|.
\end{equation}
Then  $u\in L^1(B_R, K)$ can be viewed as an $L^1$-function
from $B_R$ into $V\subset \R^n$. Define the mean value of $u$ on
$B_R$ by
\begin{equation}\label{mean}
\bar{u}_R:=\chi\Big[\f{\int_{B_R}(\chi^{-1}\circ
u)*1}{V(R)}\Big].
\end{equation}
Applying the Neumann-Poincar\'{e} inequality yields
\begin{equation}\label{poin}\aligned
\int_{B_R}d_N^2(u,\bar{u}_R)*1&\leq K_4^2\int_{B_R}|\chi^{-1}(u)-\chi^{-1}(\bar{u}_R)|^2*1\\
&\leq K_4^2K_2R^2\int_{B_{R}}|d(\chi^{-1}\circ u)|^2*1\\
&\leq \f{K_4^2K_2}{K_3^2}R^2\int_{B_{R}}|d u|^2*1.
\endaligned
\end{equation}
Here $d_N$ denotes the distance
function on $N$ induced by the metric. Now we write
$$h=H(u)=H(\bar{u}_R)+\big(H(u)-H(\bar{u}_R)\big),$$
 then
\begin{equation}\label{es4}
IV_\rho=H(\bar{u}_R)\int_{B_R}\n G^\rho(\cdot,y)\cdot \n
\eta*1+\int_{T_R}\big(H(u)-H(\bar{u}_R)\big)\n G^\rho(\cdot,y)\cdot
\n\eta*1.
\end{equation}
Similar to (\ref{es10})-(\ref{es12}), the first term can be
estimated by
\begin{equation}\label{es41}
\lim_{\rho\ra 0}H(\bar{u}_R)\int_{B_R}\n G^\rho(\cdot,y)\cdot \n
\eta*1= H(\bar{u}_R).
\end{equation}
We recall $|\n \eta|\leq \f{c_0}{R}$. In conjunction with
(\ref{poin}) we have
\begin{equation}\label{es42}\aligned
&\Big|\int_{T_R}\big(H(u)-H(\bar{u}_R)\big)\n G^\rho(\cdot,y)\cdot \n\eta*1\Big|\\
\leq& c_0\sup_K|\n^N H|R^{-1}\int_{T_R}d_N(u,\bar{u}_R)\big|\n G^\rho(\cdot,y)\big|*1\\
\leq& c_0\sup_K|\n^N H|R^{-1}\Big(\int_{T_R}d_N^2(u,\bar{u}_R)*1\Big)^{\f{1}{2}}\Big(\int_{T_R}\big|\n G^\rho(\cdot,y)\big|^2\Big)^{\f{1}{2}}\\
\leq& c_3(K_2,K_3,K_4)\sup_K|\n^N
H|\Big(\int_{B_{R}}|du|^2*1\Big)^{\f{1}{2}}\Big(\int_{T_R}\big|\n
G^\rho(\cdot,y)\big|^2\Big)^{\f{1}{2}}.
\endaligned
\end{equation}
Substituting (\ref{gr2}) into (\ref{es42}) implies
\begin{equation}\label{es44}
\Big|\int_{T_R}\big(H(u)-H(\bar{u}_R)\big)\n G^\rho(\cdot,y)\cdot
\n\eta*1\Big|\leq c_4\sup_K|\n^N
H|\Big(\f{R^2}{V(R)}\int_{B_{R}}|du|^2*1\Big)^{\f{1}{2}}
\end{equation}
where $c_4$ is a positive constant depending on $K_1,K_2,K_3$ and
$K_4.$

From (\ref{har4}), (\ref{es10})-(\ref{es12}), (\ref{es2}),
(\ref{es3})-(\ref{es31}), (\ref{es4})-(\ref{es41}), (\ref{es44}),
letting $\rho\ra 0$ we arrive at the following important formula
\begin{equation}\label{es}\aligned
h(y)=H(u(y))\leq& H(\bar{u}_R)+C_{7}(\sup_K |\n^N H|+\sup_K |\Hess\ H|)\Big(\f{R^2}{V(R)}\int_{B_{R}}|d u|^2*1\Big)^{\f{1}{2}}\\
&-\liminf_{\rho\ra
0}\int_{B_{\f{R}{2}}}G^\rho(\cdot,y)\De h*1
\endaligned
\end{equation}
for arbitrary $y\in B_{\f{R}{4}}$. Here
$C_7$ is a positive constant depending on $
K_0,K_1,K_2,K_3,K_4$ and $\sup_K F-\inf_K F$.

\medskip
\subsection{Image shrinking property}

Based on the convex functions constructed in  Theorem \ref{p0},
 with the aid of (\ref{es}) and Lemma \ref{l5}, we can derive
an image shrinking property for weakly harmonic maps, that is, when we
make the domain smaller, the image also gets smaller in a controlled
manner.

Recall that on $\Bbb{V}=S^n\backslash \ol{S}^{n-1}_+$ there are
well defined functions $v$ and $\varphi$ (see (\ref{angle})).

\begin{thm}\label{t3}

Let $M$ be a Riemannian manifold satisfying the DVP-condition, $K$ be
an arbitrary compact subset of $\Bbb{V}=S^n\backslash
\ol{S}^{n-1}_+$, and we put
\begin{equation}\label{c}
c:=\min\Big\{\f{1}{3} \inf_{x\in K} v, \inf_{x\in K} \varphi,
\inf_{x\in K} (2\pi-\varphi)\Big\}.
\end{equation}
If $u\in H_{loc}^{1,2}(M,S^n)$ is a weakly harmonic map and there exist
$y_0\in M$ and $R_1\leq \f{1}{3}R_0$ with
$B_{3R_1}\subset\subset M$ and $u(B_{R_1}(y_0))\subset K$, then
there exists $\de_1\in (0,1)$, only depending on $K_1,K_2$ and
$c$, such that $u(B_{\de_1 R_1}(y_0))$ is contained in a geodesic
ball in $S^n$ of radius $\arccos(\f{3}{2}c)<\f{\pi}{2}$.
\end{thm}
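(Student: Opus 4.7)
The plan is to combine the smooth family of convex functions $F(\cdot,\varphi_0)$ constructed in Theorem~\ref{p0} with the telescoping lemma~\ref{l5} and the pointwise estimate (\ref{es}): if $R$ is chosen via the telescoping lemma so that the normalized energy on $B_R$ is small, then (\ref{es}) will force $F(u(y),\varphi_0)$ to stay close to $F(\bar{u}_R,\varphi_0)$ on $B_{R/4}(y_0)$, and a choice of $\varphi_0$ keyed to $\bar{u}_R$ will keep $F(\bar{u}_R,\varphi_0)$ bounded away from $1$ by an amount depending only on $c$. Theorem~\ref{p0}(iii) then converts $F(u(y),\varphi_0)<1$ into $d_{S^n}(u(y),x_{\varphi_0})<\arccos(\tfrac{3}{2}c)<\pi/2$. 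To set this up, let $\Phi:=[c,2\pi-c]$; by (\ref{c}) one has $\varphi(u(B_{R_1}(y_0)))\subset\Phi$. Applying Theorem~\ref{p0} to $K$ and $\Phi$ yields a smooth family $F(\cdot,\varphi_0)$ defined on $U=\{v>2c\}$ and strictly convex on $K$, and by smoothness of the family together with compactness of $K$ and $\Phi$ one obtains uniform constants $K_0>0$ with $\Hess F(\cdot,\varphi_0)\geq K_0 g$ on $K$, a uniform $M:=\sup_{\varphi_0\in\Phi}\bigl(\sup_K|\n F|+\sup_K|\Hess F|\bigr)<\infty$, and uniform control on $\sup_K F-\inf_K F$. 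Hence the constants $C_6$ in Lemma~\ref{l5} and $C_7$ in (\ref{es}) may be taken independently of $\varphi_0\in\Phi$.

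The crucial step is the choice of $\varphi_0$. Through the chart $\chi$ of Observation~2.2, write the mean $\bar{u}_R$ of (\ref{mean}) as $\bar{u}_R=\chi(\bar\theta_R,\bar y_R)$, where $\bar\theta_R\in\R$ and $\bar y_R\in\R^{n-1}$ are the Euclidean means over $B_R$ of $\varphi\circ u$ and of $(x_3,\ldots,x_{n+1})\circ u$, respectively. On $K$ one has $|y|^2=1-v^2\leq 1-9c^2$, so Jensen's inequality yields $|\bar y_R|\leq\sqrt{1-9c^2}$ and hence $v(\bar{u}_R)=\sqrt{1-|\bar y_R|^2}\geq 3c$; likewise $\bar\theta_R\in\Phi$. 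Choose $\varphi_0:=\bar\theta_R$. Then $(\bar{u}_R,x_{\varphi_0})=v(\bar{u}_R)\geq 3c$ and $|\varphi(\bar{u}_R)-\varphi_0|=0$, so $\bar{u}_R\in V_{\varphi_0}$, and (\ref{value}) gives $\psi(\bar{u}_R,\varphi_0)=1-v(\bar{u}_R)\leq 1-3c$. Consequently
\begin{equation*}
F(\bar{u}_R,\varphi_0)\leq\gamma:=\frac{\exp(\la_0(1-3c))-1}{\exp(\la_0(1-\tfrac{3}{2}c))-1}<1,
\end{equation*}
with $\gamma$ depending only on $c,K_1,K_2$.

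To finish, given $\ep>0$ apply Lemma~\ref{l5} to produce $\hat R\in[\exp(-C_6/\ep)R_1,R_1]$ with $\hat R^2 V(\hat R/2)^{-1}\int_{B_{\hat R/2}}|du|^2*1\leq\ep$; setting $R:=\hat R/2\leq R_1/2$ gives $R^2 V(R)^{-1}\int_{B_R}|du|^2*1\leq\ep/4$. Choose $\varphi_0$ as above for this $R$ and apply (\ref{es}) with $H=F(\cdot,\varphi_0)$. Since $F\circ u$ is weakly subharmonic by (\ref{subhar}), the Green-function term in (\ref{es}) is nonpositive, and for every $y\in B_{R/4}(y_0)$
\begin{equation*}
F(u(y),\varphi_0)\leq F(\bar{u}_R,\varphi_0)+C_7 M\sqrt{\ep/4}\leq\gamma+C'\sqrt\ep,
\end{equation*}
where $C'$ depends only on $c,K_1,K_2$. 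Fix $\ep$, depending only on $c,K_1,K_2$, so that $\gamma+C'\sqrt\ep<1$. Theorem~\ref{p0}(iii) then yields $(u(y),x_{\varphi_0})>\tfrac{3}{2}c$, i.e.\ $u(B_{R/4}(y_0))$ lies in the geodesic ball of radius $\arccos(\tfrac{3}{2}c)$ about $x_{\varphi_0}$. Setting $\de_1:=R/(4R_1)\geq\exp(-C_6/\ep)/8$ gives a constant depending only on $c,K_1,K_2$. The main obstacle throughout is the universality of the bound $F(\bar{u}_R,\varphi_0)\leq\gamma<1$, since $\bar{u}_R$ itself is not directly controlled and most choices of $\varphi_0$ would give useless values of $F(\bar{u}_R,\varphi_0)$; this is resolved by the Jensen observation $v(\bar{u}_R)\geq 3c$, which lets us pick $\varphi_0=\bar\theta_R$ and read off $\psi(\bar{u}_R,\varphi_0)$ from the closed form (\ref{value}).
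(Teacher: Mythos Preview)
Your argument follows essentially the same route as the paper: enlarge to a set diffeomorphic to a convex domain, choose $\varphi_0=\varphi(\bar u_R)$, bound $F(\bar u_R,\varphi_0)$ away from $1$, feed the telescoping lemma into the pointwise estimate (\ref{es}), and read off the conclusion from Theorem~\ref{p0}(iii). The paper obtains the bound $F(\bar u_R,\varphi_0)\le 1-c_5$ by a soft compactness argument on the set $\Sigma$, whereas you compute it explicitly via (\ref{value}); both work.

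There is, however, one genuine slip: you apply Theorem~\ref{p0} to the given compact set $K$, and then claim that $\gamma$, $C_6$, $C_7$, $\ep$ and ultimately $\delta_1$ depend only on $c,K_1,K_2$. They do not. The constant $\la_0$ in Theorem~\ref{p0} (and hence your $\gamma$), as well as $K_0$ and $M$, are produced by compactness on $K$ and depend on the particular $K$, not merely on $c$. Moreover, the derivation of (\ref{es}) requires that the image set be diffeomorphic via $\chi$ to a \emph{convex} domain (so that $\bar u_R$ in (\ref{mean}) is defined and (\ref{poin}) applies); an arbitrary compact $K\subset\Bbb{V}$ need not satisfy this. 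The paper repairs both points at once by replacing $K$ with $\td K:=\chi\big([c,2\pi-c]\times\bar{\Bbb D}^{n-1}(\sqrt{1-9c^2})\big)\supset K$, which depends only on $c$ and is diffeomorphic to a convex set; Theorem~\ref{p0} is then applied to $\td K$, and all bounds are taken over $\td K$. Your Jensen computation already shows $\bar u_R\in\td K$, so once you make this replacement your argument goes through verbatim with the correct dependencies.

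A minor point: $\delta_1:=R/(4R_1)$ depends on $u$ through the choice of $R$; you should set $\delta_1:=\tfrac18\exp(-C_6/\ep)$, the uniform lower bound you already recorded, and then $B_{\delta_1 R_1}\subset B_{R/4}$ gives the conclusion.
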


\begin{proof}

The definition of $\chi:(0,2\pi)\times \Bbb{D}^{n-1}\ra \Bbb{V}$
is shown in Observation 2.2. From (\ref{c}), it is easily seen
that
\begin{equation}
\chi^{-1}(K)\subset [c,2\pi-c]\times \Bbb{D}^{n-1}(\sqrt{1-(3c)^2}).
\end{equation}
Denote
\begin{equation}
\td{K}:=\chi\big([c,2\pi-c]\times
\Bbb{D}^{n-1}(\sqrt{1-(3c)^2})\big),
\end{equation}
then $\td{K}\supset K$ is diffeomorphic to a compact and convex subset of
$\R^n$. Then we can define the mean value of $u$ on $B_R$ (denoted
by $\bar{u}_R$) as in (\ref{mean}) ; and obviously $\bar{u}_R\in
\td{K}$. Also note that the constants $K_3$ and $K_4$ given in (\ref{bound})  depend
only on $c$.

Let $F$ be the convex function on $\td{K}$ given in Theorem
\ref{p0}, then there exists $K_0>0$, such that $\Hess\ F\geq K_0\
h$ on $\td{K}$, where $h$ is the canonical metric on $S^n$.
$K_0$ and $\sup_{\td{K}} F-\inf_{\td{K}} F$
 depend only on $c$.

If $U$ is defined as in (\ref{U}), then $\td{K}\subset
U$. Let $F(\cdot,\varphi_0)$ ($\varphi_0\in [c,2\pi-c]$) be
the smooth family of the smooth functions on $U$ constructed in
Theorem \ref{p0}. Put
$$\Si:=\big\{(x,\varphi_0)\in \td{K}\times [c,2\pi-c]: (x_1,x_2)=\sqrt{x_1^2+x_2^2}(\sin\varphi_0,\cos\varphi_0)\big\},$$
then for
each $(x,\varphi_0)\in \Si$, for
$x_{\varphi_0}:=(\sin\varphi_0,\cos\varphi_0,0,\cdots,0)$, then
$(x,x_{\varphi_0})=\sqrt{x_1^2+x_2^2}\geq 3c$; which implies
$F|_{\Si}<1$. Hence by the compactness of $\Si$, we can find a
positive constant $c_5$, such that
\begin{equation}
F|_\Si\leq 1-c_5.
\end{equation}
Put
\begin{equation}
c_6:=\sup_{\varphi_0\in [c,2\pi-c]}\Big(\sup_{\td{K}} \big|\n
F(\cdot,\varphi_0)\big|+\sup_{\td{K}} \big|\Hess\ F(\cdot,
\varphi_0)\big|\Big).
\end{equation}
Then for $\ep=c_5^2c_6^{-2}C_{7}^{-2}$, Lemma \ref{l5} enables us
to find $R\in [\f{1}{2}\exp(-C_6\ep^{-1})R_1,\f{1}{2}R_1]$, where
$C_6$ is a positive constant only depending on $K_1,K_2$ and $c$
, such that
\begin{equation}
\f{(2R)^2}{V(R)}\int_{B_R}|d u|^2*1\leq \ep.
\end{equation}
Since $\bar{u}_R\in \td{K}$, there is
$\varphi_0\in [c,2\pi-c]$ satisfying $(\bar{u}_R,\varphi_0)\in \Si$,
and moreover (\ref{es}) yields
\begin{equation}\aligned
F(u(y),\varphi_0)&\leq F(\bar{u}_R,\varphi_0)+\f{1}{2}C_{7}c_6\ep^{\f{1}{2}}\\
&\qquad -\liminf_{\rho\ra 0}\int_{B_{\f{R}{2}}}G^\rho(\cdot,y)\De \big(F(\cdot,\varphi_0)\circ u\big)*1\\
                &\leq 1-c_5+\f{1}{2}C_{7}c_6\ep^{\f{1}{2}}=1-\f{1}{2}c_5<1
                \endaligned
\end{equation}
for all $y\in B_{\f{R}{4}}$. Hence if we take
$\de_1=\f{1}{8}\exp(-C_6\ep^{-1})$, then for arbitrary $y\in
B_{\de_1R_1}\subset B_{\f{R}{4}}$, we have $(u(y),x_{\varphi_0})>\f{3}{2}c$;
i.e., $u(B_{\de_1 R_1})$ is contained in the geodesic ball centered
at $x_{\varphi_0}$ and of radius $\arccos(\f{3}{2}c)$.

\end{proof}

\medskip
\subsection{Estimating the oscillation}

Now we put $R^{(0)}:=\de_1 R_1$, $r_0:=\arccos(\f{3}{2}c)$,
$x^{(0)}:=x_{\varphi_0}$ and denote by $\mc{B}_{r}(x)$ the geodesic ball of
$S^n$ centered at $x$ and of radius $r$. First of all, note that
one can find positive constants $c_7, c_8$ and $c_9$, only
depending on $c$, such that
\begin{equation}
\sup_{\td{K}} |\n \rho(x,\cdot)^2|+\sup_{\td{K}} |\Hess\ \rho(x,\cdot)^2|\leq
c_7
\end{equation}
and
\begin{equation}
c_8\leq |d\exp_{x}|\leq c_9\qquad \mbox{on }\bar{\Bbb{D}}(r_0)
\end{equation}
for all $x\in S^n$. Here $\rho(x,\cdot)$ is the distance function on
$S^n$ from $x$, and $\exp_{x}$ denotes the exponential mapping of
$S^n$ at $x$; its restriction on $\bar{\Bbb{D}}(r_0)$ is a
diffeomorphism. Since
$$u(B_{R^{(0)}})\subset \mc{B}_{r_0}(x^{(0)})=\exp_{x^{(0)}}(\Bbb{D}(r_0)),$$
we can define the mean value of $u$ on $B_R$ with $R\leq R^{(0)}$ by
$$\bar{u}_R=\exp_{x^{(0)}}\Big[\f{\int_{B_R}(\exp_{x^{(0)}}^{-1}\circ u)*1}{V(R)}\Big],$$
and
$u(B_{R^{(0)}})\subset \mc{B}_{r_0}(x^{(0)})$ implies $\bar{u}_R\in
\mc{B}_{r_0}(x^{(0)})$.  Hence by (\ref{es}), there is a positive
constant $c_{10}$, only depending on $K_1,K_2$ and $c$,
such that for all $R\leq R^{(0)}$, $y\in B_{\f{R}{4}}$ and $x\in
S^n$
\begin{equation}\label{es0}
\aligned
\rho(x,u(y))^2\leq& \rho(x,\bar{u}_R)^2+c_{10}c_{7}\Big(\f{R^2}{V(R)}\int_{B_{R}}|d u|^2*1\Big)^{\f{1}{2}}\\
&-\liminf_{\rho\ra
0}\int_{B_{\f{R}{2}}}G^\rho(\cdot,y)\De\big(
\rho(x,\cdot)^2\circ u\big)*1.
\endaligned
\end{equation}
For arbitrary $\ep>0$, Lemma \ref{l5} enables us to find $R\in
[4R^{(1)},R^{(0)}]$, where
$R^{(1)}=\de^{(0)}(\ep,K_1,K_2,c)R^{(0)}$, such that
$$c_{10}c_{7}\Big(\f{R^2}{V(R)}\int_{B_{R}}|d u|^2*1\Big)^{\f{1}{2}}\leq \ep.$$
Since $\bar{u}_R\in \mc{B}_{r_0}(x^{(0)})$, one can easily
find $x^{(1)}\in \mc{B}_{r_0}(x^{(0)})$, such that
$$\rho(x^{(0)},x^{(1)})\leq \f{\pi}{2}-r_0\qquad\mbox{and}\qquad\rho(x^{(1)},\bar{u}_R)\leq 2r_0-\f{\pi}{2}.$$
Thereby $\mc{B}_{r_0}(x^{(0)})\subset
\mc{B}_{\f{\pi}{2}}(x^{(1)})$ and hence $\rho(x^{(1)},\cdot)^2$
is convex on $u(B_{R^{(0)}})\supset u(B_{\f{R}{2}})$. Letting
$x=x^{(1)}$ in (\ref{es0}) yields
$$\rho(x^{(1)},u(y))^2\leq (2r_0-\f{\pi}{2})^2+\ep\qquad \forall y\in B_{R^{(1)}}\subset B_{\f{R}{4}}.$$
Let
$$\ep=(\f{3}{2}r_0-\f{\pi}{4})^2-(2r_0-\f{\pi}{2})^2,$$
then we arrive at
\begin{equation}
u(B_{R^{(1)}})\subset \mc{B}_{r_1}(x^{(1)})\qquad \mbox{where }
r_1=\f{3}{2}r_0-\f{\pi}{4}.
\end{equation}
Similarly for each $j\geq 1$, if $r_j>\f{\pi}{4}$, we can find
$x^{(j+1)}\in \mc{B}_{r_j}(x^{(j)})$ and
$R^{(j+1)}=\de^{(j)}(K_1,K_2,c)R^{(j)}$, such that
\begin{equation}
u(B_{R^{(j+1)}})\subset \mc{B}_{r_{j+1}}(x^{(j)})\qquad \mbox{where
} r_{j+1}=\f{3}{2}r_j-\f{\pi}{4}.
\end{equation}
Noting that $r_0>r_1>r_2>\cdots$ and
$r_j-r_{j+1}=\f{1}{2}(\f{\pi}{2}-r_j)\geq \f{1}{2}(\f{\pi}{2}-r_0)$,
after $k$ steps ($k$ only depending on $c$) we can arrive at
\begin{equation}
u(B_{R^{(k)}})\subset \mc{B}_{r_{k}}(x^{(k)})\subset
\mc{B}_{\f{\pi}{4}}(x^{(k)}).
\end{equation}
This implies that for arbitrary $R\leq R^{(k)}$, $\bar{u}_R\subset
\mc{B}_{\f{\pi}{4}}(x^{(k)})$ and moreover $u(B_{R^{(k)}})\subset
\mc{B}_{\f{\pi}{2}}(\bar{u}_R)$. Hence $\rho(\bar{u}_R,\cdot)^2$ is
convex on $u(B_{R^{(k)}})$. Letting $x=\bar{u}_R$ in (\ref{es0})
yields
\begin{equation}
\rho(\bar{u}_R,u(y))^2\leq
c_{10}c_{7}\Big(\f{R^2}{V(R)}\int_{B_{R}}|d
u|^2*1\Big)^{\f{1}{2}}.
\end{equation}
for arbitrary $y\in B_{\f{R}{4}}$. Hence the oscillation of $u$ on
$B_{\f{R}{4}}$ can be controlled by
\begin{equation}
\mbox{osc}_{B_{\f{R}{4}}}u\leq
2(c_{10}c_{7})^{\f{1}{2}}\Big(\f{R^2}{V(R)}\int_{B_{R}}|d
u|^2*1\Big)^{\f{1}{4}}.\qquad \forall R\leq R^{(k)}.
\end{equation}
Again applying Lemma \ref{l5} we have the following theorem:

\begin{thm}\label{t5}

When $M,u,\Bbb{V},K,c,y_0,R_1$
satisfy the assumptions of Theorem \ref{t3}, then $u$ is continuous at $y_0$.
More precisely, for arbitrary $\ep>0$, there is $\de_2\in (0,1)$,
only depending on $K_1,K_2,c$ and $\ep$, such that
\begin{equation}
\mbox{osc}_{B_{\de_2 R_1}(y_0)}u\leq \ep.
\end{equation}
\end{thm}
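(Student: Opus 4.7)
The plan is to combine the oscillation bound set up in the preceding subsection with one final application of the telescoping Lemma \ref{l5}. The image-shrinking iteration based on Theorem \ref{t3} has produced radii $R^{(0)} > R^{(1)} > \cdots > R^{(k)}$ and points $x^{(0)},\ldots,x^{(k)} \in S^n$, with $k$ bounded in terms of $c$ and each ratio $R^{(j+1)}/R^{(j)}$ controlled only by $K_1, K_2$ and $c$, such that $u(B_{R^{(k)}}(y_0)) \subset \mc{B}_{\pi/4}(x^{(k)})$. In particular, for every $R \le R^{(k)}$ the mean value $\bar u_R$ lies in $\mc{B}_{\pi/4}(x^{(k)})$ and $u(B_R) \subset \mc{B}_{\pi/2}(\bar u_R)$, so that the squared distance $\rho(\bar u_R,\cdot)^2$ is smooth and convex on a neighborhood of the image.

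With this in hand, I would insert $H = \rho(\bar u_R,\cdot)^2$ into the pointwise formula (\ref{es}); the subharmonicity of $H \circ u$ makes the $\liminf$ term nonnegative, so it can be dropped on the right, yielding the oscillation estimate
\begin{equation*}
\mbox{osc}_{B_{R/4}(y_0)} u \le 2(c_{10}c_7)^{1/2}\Big(\f{R^2}{V(R)}\int_{B_R}|du|^2*1\Big)^{1/4}, \qquad R \le R^{(k)},
\end{equation*}
where all constants depend only on $K_1,K_2,c$.

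Given $\ep > 0$, I would then apply the second part of Lemma \ref{l5} on the ball $B_{R^{(k)}}(y_0)$ with the convex function $F$ furnished by Theorem \ref{p0}, choosing the tolerance $\ep' = c'(K_1,K_2,c)\,\ep^{4}$ (absorbing a factor coming from $2(c_{10}c_7)^{1/2}$ and from the doubling constant needed to pass from the $B_{R/2}$-integral in Lemma \ref{l5} to the $B_R$-integral in the oscillation bound). This produces some radius $R \in [\exp(-C_6(\ep')^{-1}) R^{(k)}, R^{(k)}]$ for which the right-hand side of the oscillation estimate is at most $\ep$. Taking $\de_2 := \tfrac{1}{4}(R^{(k)}/R_1)\exp(-C_6(\ep')^{-1})$, a quantity depending only on $K_1,K_2,c$ and $\ep$, then gives $\mbox{osc}_{B_{\de_2 R_1}(y_0)} u \le \ep$ and concludes the proof.

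I do not see any conceptual obstacle; the work is essentially one of bookkeeping. The main point to verify carefully is that the constants $c_7, c_{10}, C_6, C_7$ and the geometric ratios $R^{(j+1)}/R^{(j)}$ accumulated through the image-shrinking iteration each depend only on the admissible data $(K_1,K_2,c)$, and in particular do not smuggle in the global energy of $u$. The resulting modulus of continuity is of the form $\de_2 \sim \exp(-C\ep^{-4})$, which is weak but perfectly adequate for the claimed continuity.
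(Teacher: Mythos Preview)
Your proposal is correct and follows the paper's approach essentially line for line: the paper likewise runs the image-shrinking iteration to reach $u(B_{R^{(k)}})\subset \mc{B}_{\pi/4}(x^{(k)})$, plugs $H=\rho(\bar u_R,\cdot)^2$ into the pointwise estimate to get the oscillation bound, and then finishes with one more application of Lemma~\ref{l5} to make the energy term small. The only cosmetic difference is that the paper leaves the final bookkeeping (your choice of $\ep'$ and $\de_2$) implicit in the phrase ``Again applying Lemma~\ref{l5}.''
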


\medskip
\subsection{H\"{o}lder estimates}

Now we can proceed as in \cite{J} and  Ch. 7.6  in \cite{J1} to
get the H\"{o}lder estimates for weakly harmonic maps.

By Theorem \ref{t5}, there is a constant $\de_2\in (0,1)$, depending only on $K_1,K_2$
and $c$, such that
$$u(B_{R_2}(y_0))\subset \mc{B}_{\f{\pi}{8}}(x_0)\qquad \mbox{where }R_2=\de_2 R_1,x_0=u(y_0).$$
This implies that the function $\rho^2(\cdot,x)$ is strictly convex on $u(B_{R_2})$ for arbitrary
$x$ in the convex hull of $u(B_{R_2})$; furthermore, one can find a positive constant $c_{11}$, independent
of the choice of $x$, such that
\begin{equation}
\Hess\ \rho^2(\cdot,x)\geq c_{11}h.
\end{equation}
Similar to the above, one can define the mean value of $u$ on $B_R$ with $R\leq R_2$ by
$$\bar{u}_R=\exp_{x_0}\Big[\f{\int_{B_R}(\exp_{x_0}^{-1}\circ u)*1}{V(R)}\Big].$$
Then $\bar{u}_R$ lies in the convex hull of $u(B_{R_2})$. The convexity of $\rho^2(\cdot,x)$
implies that the compositions
$$v=\rho^2(\cdot,x_0)\circ u,\qquad\mbox{and }w=\rho^2(\cdot,\bar{u}_{\f{R}{2}})\circ u$$
are both subharmonic functions. Applying Corollary
\ref{c1} yields
\begin{equation}\label{H1}
w_{+,\ep^k R}\leq \ep^2 w_{+,R}+(1-\ep^2)\bar w_{R'}
\end{equation}
for some $R'\in [\ep^k R,\f{R}{2}]$. By (\ref{poin}), the doubling
property and the Telescoping Lemma \ref{l5}, we arrive at
\begin{equation}\label{H2}\aligned
\bar w_{R'}&=\aint{B_{R'}}\rho^2(u,\bar{u}_{\f{R}{2}})
\leq \f{1}{V(R')}\int_{B_{\f{R}{2}}}\rho^2(u,\bar{u}_{\f{R}{2}})*1\\
&\leq \f{CR^2}{V(R')}\int_{B_{\f{R}{2}}}|du|^2*1\leq \f{CR^2}{V(\f{R}{2})}\int_{B_{\f{R}{2}}}|du|^2*1\\
&\leq c_{12}(v_{+,R}-v_{+,\f{R}{2}}).
\endaligned
\end{equation}
Here $c_{12}$ depends on $\ep$. With the aid of the triangle
inequality, it is easily seen that $v_{+,\ep^k R}\leq 4w_{+,\ep^k
R}$ and $w_{+,R}\leq 4v_{+,R}$. Substituting (\ref{H2})
into (\ref{H1}) yields
\begin{equation}\aligned
v_{+,\ep^k R}&\leq 4w_{+,\ep^k R}\leq 4\ep^2w_{+,R}+4(1-\ep^2)w_{R'}\\
             &\leq 16\ep^2 v_{+,R}+4c_{12}(1-\ep^2)(v_{+,R}-v_{+,\f{R}{2}})\\
             &\leq 16\ep^2 v_{+,R}+c_{13}(v_{+,R}-v_{+,\ep^k R})
             \endaligned
\end{equation}
where $c_{13}$ is a positive constant depending on $\ep$. Take $\ep=\f{1}{8}$, and put $\de=\ep^k$, then
\begin{equation}
v_{+,\de R}\leq \f{\f{1}{4}+c_{13}}{1+c_{13}}v_{+,R}.
\end{equation}
By iteration, we arrive at
\begin{equation}\label{H3}
\sup_{y\in B_R(y_0)}\rho\big(u(y),u(y_0)\big)\leq c_{14}\Big(\f{R}{R_2}\Big)^\si=c_{14}\de_2^{-\si}R_1^{-\si}R^\si
\end{equation}
for arbitrary $0<R\leq R_2$,
where $c_{14}>0$, $\si\in (0,1)$ are constants depending only on $K_1, K_2$ and $c$.

We note that for each $y_1\in B_{\f{R_1}{2}}(y_0)$, $u(B_{\f{R_1}{2}}(y_1))\subset u(B_{R_1}(y_0))\subset K$;
therefore, (\ref{H3}) still holds true for arbitrary $0<R\leq \f{R_2}{2}$
when $y_0$ is replaced by $y_1$. We can get the following theorem:

\begin{thm}\label{t4}
With $M,u,\Bbb{V},K,c,y_0,R_1$
satisfying the above assumptions, there exist numbers $\si\in (0,1)$,
$\de_3\in (0,1)$ and $C_8>0$, depending only on
$K_1,K_2$ and $c$, such that the $\si$-H\"{o}lder seminorm
of $u$ on $\bar{B}_{\de_3 R_1}(y_0)$ can be estimated by
\begin{equation}
[u]_{C^\si(\bar{B}_{\de_3 R_1})(y_0)}\leq C_{8}R_1^{-\si}.
\end{equation}
\end{thm}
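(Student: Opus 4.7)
The plan is to derive a Campanato-type one-step decay estimate for an auxiliary subharmonic function built from $u$, iterate it to obtain a pointwise H\"older estimate at $y_0$, and then observe that the hypotheses of the setup are preserved when the center is moved inside a slightly smaller ball, so that the same estimate survives at every such base point.

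First I would invoke Theorem \ref{t5} with the specific choice $\ep=\f{\pi}{8}$: this yields $\de_2=\de_2(K_1,K_2,c)\in(0,1)$ and $R_2:=\de_2 R_1$ such that $u(B_{R_2}(y_0))\subset \mc{B}_{\f{\pi}{8}}(x_0)$, where $x_0:=u(y_0)$. On this small geodesic ball, $\rho^2(\cdot,x)$ is smooth and strictly convex with $\Hess\ \rho^2(\cdot,x)\geq c_{11}\ h$ for every $x$ in the convex hull of $u(B_{R_2}(y_0))$, with $c_{11}>0$ uniform. Defining $\bar{u}_R$ for $R\leq R_2$ through exponential coordinates at $x_0$, the compositions
$$v:=\rho^2(\cdot,x_0)\circ u, \qquad w:=\rho^2(\cdot,\bar{u}_{\f{R}{2}})\circ u$$
are both weakly subharmonic on $B_{R_2}(y_0)$.

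Next I would apply Corollary \ref{c1} to $w$ to get $w_{+,\ep^k R}\leq \ep^2 w_{+,R}+(1-\ep^2)\bar{w}_{R'}$ for some $R'\in[\ep^k R,\f{R}{2}]$. The crucial step is to control $\bar{w}_{R'}$: combining the Neumann-Poincar\'e inequality (\ref{poin}) (which dominates $\bar{w}_{R'}$ by the energy of $u$ on $B_{\f{R}{2}}$) with the Telescoping Lemma \ref{l5} (which, using $\Hess\ \rho^2(\cdot,x_0)\geq c_{11}\ h$, dominates that energy by the oscillation of $v$) produces $\bar{w}_{R'}\leq c_{12}(v_{+,R}-v_{+,\f{R}{2}})$. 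The triangle inequality for $\rho$ gives the cheap comparisons $v_{+,\ep^k R}\leq 4 w_{+,\ep^k R}$ and $w_{+,R}\leq 4 v_{+,R}$, hence
$$v_{+,\ep^k R}\leq 16\ep^2 v_{+,R}+c_{13}(v_{+,R}-v_{+,\ep^k R}).$$
Setting $\ep=\f{1}{8}$ and $\de:=\ep^k$ converts this into a genuine contraction $v_{+,\de R}\leq \theta v_{+,R}$ with $\theta=\f{\f{1}{4}+c_{13}}{1+c_{13}}<1$, all constants depending only on $K_1,K_2,c$. Standard Moser-type iteration along the geometric sequence $\de^j R_2$ then produces $v_{+,R}\leq c_{14}(R/R_2)^{2\si}$ for some $\si\in(0,1)$, and taking square roots gives estimate (\ref{H3}).

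To promote this pointwise estimate at $y_0$ into a $C^\si$-seminorm bound, I would note that for every $y_1\in B_{\f{R_1}{2}}(y_0)$ the inclusion $B_{\f{R_1}{2}}(y_1)\subset B_{R_1}(y_0)$ preserves the standing hypothesis $u(B_{\f{R_1}{2}}(y_1))\subset K$. The entire argument above therefore applies verbatim with $(y_0,R_1)$ replaced by $(y_1,\f{R_1}{2})$ and yields $\rho(u(y),u(y_1))\leq C\,R_1^{-\si} d(y,y_1)^\si$ for $y$ sufficiently close to $y_1$; Condition (D) then allows one to trade $d$ for the Riemannian distance if desired. Choosing $\de_3$ small enough (say $\de_3=\f{1}{4}\de_2$) and taking suprema over $y,y_1\in \bar{B}_{\de_3 R_1}(y_0)$ produces the asserted bound on $[u]_{C^\si(\bar{B}_{\de_3 R_1}(y_0))}$. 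The main obstacle is the bookkeeping required to close the loop on a single function: Corollary \ref{c1} produces an average term natural only for $w$, while the iteration needs to be run on $v$; only by transferring oscillation information from $v$ through the energy of $u$ back to $w$ via Lemma \ref{l5} and the Poincar\'e inequality does the contraction actually bite.
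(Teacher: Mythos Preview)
Your proposal is correct and follows essentially the same route as the paper's own argument: invoking Theorem \ref{t5} to land in $\mc{B}_{\pi/8}(x_0)$, introducing the two subharmonic functions $v=\rho^2(\cdot,x_0)\circ u$ and $w=\rho^2(\cdot,\bar u_{R/2})\circ u$, applying Corollary \ref{c1} to $w$, converting $\bar w_{R'}$ into an oscillation of $v$ via Poincar\'e plus the Telescoping Lemma, using the triangle-inequality comparisons $v_{+,\ep^k R}\le 4w_{+,\ep^k R}$ and $w_{+,R}\le 4v_{+,R}$, choosing $\ep=\tfrac18$ to obtain the contraction, iterating, and finally shifting the base point to any $y_1\in B_{R_1/2}(y_0)$. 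The constants and the logic match the paper step for step.
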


Here and in the sequel,
\begin{equation}\label{holder}
[u]_{C^\si(S)}:=\sup_{y_1,y_2\in S,y_1\neq y_2}\f{\rho\big(u(y_1),u(y_2)\big)}{r(y_1,y_2)^\si}
\end{equation}
where $r(\cdot,\cdot)$ is the distance function on $M$ induced by the
metric $g$.

We point out that the H\"older bound thus depends only on the geometry
of the domain, as incorporated in the volume doubling constant $K_1$
of (\ref{double}) and the Poincar\'e inequality constant $K_2$ of
(\ref{str-p}) and on the convexity condition on the image as reflected
in the constant $c$ that controls the Hessian of our convex functions,
but not on the map $u$, and in particular not on its
energy. Therefore, in the sequel, in our Liouville theorem, we do not
need to require that the map in question have finite energy.

\medskip
\subsection{A Liouville type theorem}

Letting $R_1\ra +\infty$ in Theorem \ref{t5} or Theorem \ref{t4}, we obtain
the following Liouville-type theorem:

\begin{thm}\label{t7}

Let $(M,g)$ be a complete Riemannian manifold satisfying the
DVP-condition with $R_0=+\infty$, and $K$ be an arbitrary compact
subset of $\Bbb{V}=S^n\backslash \ol{S}^{n-1}_+$. If $u:M\ra S^n$
is a weakly harmonic map, and almost every $y\in M$ satisfies
$u(y)\in K$, then $u$ is constant.

\end{thm}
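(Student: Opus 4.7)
My plan is to derive Theorem \ref{t7} as an immediate scaling consequence of the Hölder estimate in Theorem \ref{t4}. Fix an arbitrary basepoint $y_0 \in M$. Because $(M,g)$ is complete and the DVP-condition holds up to $R_0 = +\infty$, for every $R_1 > 0$ the ball $B_{3R_1}(y_0)$ is relatively compact in $M$, so all the hypotheses of Theorems \ref{t3}--\ref{t4} are met once one observes that $u(B_{R_1}(y_0)) \subset K$ (after passing to the continuous representative of $u$ provided by Theorem \ref{t5}, which is legitimate since $u(y) \in K$ for almost every $y \in M$). Applying Theorem \ref{t4} to $u$ with this choice of $R_1$ yields
\begin{equation*}
[u]_{C^\sigma(\bar{B}_{\delta_3 R_1}(y_0))} \leq C_8\, R_1^{-\sigma},
\end{equation*}
where $\sigma \in (0,1)$, $\delta_3 \in (0,1)$, and $C_8 > 0$ depend only on $K_1$, $K_2$, and the convexity parameter $c$ attached to $K \subset \Bbb{V}$.

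Now pick any two points $y_1, y_2 \in M$. By completeness, for all $R_1$ large enough both $y_1$ and $y_2$ lie inside $\bar{B}_{\delta_3 R_1}(y_0)$, and the definition \eqref{holder} of the Hölder seminorm gives
\begin{equation*}
\rho\big(u(y_1),\, u(y_2)\big) \leq C_8\, R_1^{-\sigma}\, r(y_1,y_2)^{\sigma}.
\end{equation*}
Since the right-hand side tends to $0$ as $R_1 \to +\infty$ while the left-hand side is a fixed number, we conclude $u(y_1) = u(y_2)$. As $y_1$ and $y_2$ were arbitrary, $u$ is constant.

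The only non-routine point, which has already been secured by the development preceding the theorem, is that the constants $\sigma$, $\delta_3$, and $C_8$ in Theorem \ref{t4} are genuinely independent of $R_1$ and of the map $u$ (in particular, independent of its energy), relying only on the doubling constant $K_1$, the Poincaré constant $K_2$, and the geometric parameter $c$ of the target compact set $K$. This independence is exactly what allows the scaling argument $R_1 \to +\infty$ to contract the Hölder seminorm to zero; were any hidden dependence on $R_1$ or on $\int_M |du|^2$ present, the factor $R_1^{-\sigma}$ could be cancelled out and the conclusion would fail. For this reason no global finite-energy hypothesis on $u$ is required, which is precisely the strength of the estimates built up in Theorems \ref{t3}--\ref{t4}.
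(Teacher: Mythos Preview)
Your proposal is correct and follows exactly the approach the paper indicates: the paper's own proof consists of the single sentence ``Letting $R_1\ra +\infty$ in Theorem \ref{t5} or Theorem \ref{t4}, we obtain the following Liouville-type theorem,'' and you have simply spelled out the details of the $R_1\to\infty$ scaling argument from Theorem \ref{t4}. Your explicit remark that the constants $\sigma,\de_3,C_8$ are independent of $R_1$ and of the energy of $u$ is precisely the point the paper stresses just before stating the theorem.
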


{\bf Remark:} Solomon \cite{so2} obtained regularity and Liouville
theorems for {\bf energy
minimizing} harmonic maps when the image omits a neighborhood of a
totally geodesics $S^{n-2}$. Since we wish to apply our Liouville
theorem to the Bernstein problem in the next section, we cannot make
the assumption that the harmonic maps under consideration be energy
minimizing as in general it is not clear under which conditions Gauss
maps are energy minimizing.  In any case, his Liouville theorem is
only derived for the case where the domain is Euclidean space. And Solomon's result ceases to be true without the
energy minimizing assumption.

\bigskip\bigskip

\Section{Analytic and geometric conclusions}{Analytic and geometric conclusions}

\subsection{Simple manifolds}

Let $M=\Bbb{D}^m(r_0)\subset \R^m$ with metric
$g=g_{\a\be}(y^1,\cdots,y^m)dy^\a dy^\be$. We suppose that  there exist
two positive constants $\la$ and $\mu$, such that
\begin{equation}\label{uni}
\la^2 |\xi|^2\leq g_{\a\be}\xi^\a\xi^\be\leq \mu^2|\xi|^2.
\end{equation}
Now we define a distance function $d$ on $M$:
\begin{equation}
(y_1,y_2)\in M\times M\mapsto \la|y_1-y_2|.
\end{equation}
Then obviously
$$r(y_1,y_2)\geq \la|y_1-y_2|=d(y_1,y_2)$$
where $r(\cdot,\cdot)$ denotes the usual distance function induced by the metric $g$.

Denoting by $\Bbb{D}(y,R)$ the Euclidean disk centered at $y$ and of radius $R$,
then with respect to the distance function $d$,
\begin{equation}
B_R(y)=\Bbb{D}(y,\la^{-1} R)\cap \Bbb{D}(r_0).
\end{equation}
Taking
\begin{equation}
R_0:=\la r_0,
\end{equation}
then for arbitrary $R\leq R_0$,
\begin{equation}
V(y,2R)= \V\big(\Bbb{D}\big(y,2\la^{-1} R)\cap \Bbb{D}(r_0)\big)\leq \big(\f{2\mu}{\la}\big)^m R^m \om_m
\end{equation}
and
\begin{equation}
V(y,R)\geq \V\Big(\Bbb{D}\big(y-\f{1}{2}\la^{-1}R\f{y}{|y|},\f{1}{2}\la^{-1} R\big)\Big)\geq 2^{-m}R^m \om_m.
\end{equation}
Here $\om_m$ is the volume of the $m$-dimensional Euclidean unit
disk equipped with the canonical metric. Hence $(M,g)$ satisfies
condition (V) with $K_1=(\f{4\mu}{\la})^m$.

We note that
$B_R(y)\subset\subset M$ if and only if $B_R(y)=\Bbb{D}(y,\la^{-1}R)$. By \cite{p-w},
\begin{equation}
\int_{\Bbb{D}(y,\la^{-1}R)}|v-\bar{v}_{\la^{-1}R}|^2dy\leq 4\pi^{-2}\la^{-2}R^2\int_{\Bbb{D}(y,\la^{-1}R)}|Dv|^2 dy
\end{equation}
where
$$\bar{v}_{\la^{-1}R}=\f{\int_{\Bbb{D}(y,\la^{-1}R)} v\ dy}{\int_{\Bbb{D}(y,\la^{-1}R)}  dy}$$
and $|D v|^2=\sum_\a (\p^\a v)^2$. Noting that $*1=\sqrt{\det(g_{\a\be})}dy$ and $|\n v|^2=g^{\a\be}\p^\a v\p^\be v$,
where $\big(g^{\a\be}\big)$ denotes the inverse matrix of $\big(g_{\a\be}\big)$, it is easy for us to arrive at
\begin{equation}\aligned
\int_{B_R(y)}|v-\bar{v}_{B_R(y)}|^2*1&\leq \mu^m \int_{\Bbb{D}(y,\la^{-1}R)}|v-\bar{v}_{\la^{-1}R}|^2dy\\
                                     &\leq 4\pi^{-2}\la^{-2}\mu^m R^2\int_{\Bbb{D}(y,\la^{-1}R)}|Dv|^2 dy\\
                                     &\leq 4\pi^{-2}\big(\f{\mu}{\la}\big)^{m+2}R^2\int_{B_R(y)}|\n v|^2*1
\endaligned
\end{equation}
which means that  $M$ satisfies condition (P) with
$K_2=4\pi^{-2}\big(\f{\mu}{\la}\big)^{m+2}$.

Therefore, applying Theorem \ref{t4} yields the following H\"{o}lder estimate:

\begin{thm}\label{t6}
Let $M=\Bbb{D}^m(r_0)$ with metric $g=g_{\a\be}dy^\a dy^\be$, and
suppose that there exist
two positive constants $\la$ and $\mu$, such that
\begin{equation*}
\la^2 |\xi|^2\leq g_{\a\be}\xi^\a\xi^\be\leq \mu^2|\xi|^2
\end{equation*}
for arbitrary $\xi\in M$. Suppose $u\in H_{loc}^{1,2}(M,S^n)$ is a weakly harmonic map, and there
exists a compact set $K\subset S^n\backslash \ol{S}^{n-1}_+$, such that $u(y)\in K$ for almost every $y\in M$.
Then there exist numbers $\si_1\in (0,1)$,
$\ep_4\in (0,1)$, and $C_9>0$, depending only on $m,\f{\mu}{\la}$ and $K$, but not on $r_0$, such
that the $\si_1$-H\"{o}lder seminorm of $u$ on $\bar{B}_{\ep_4 r_0}$ is estimated by
\begin{equation}
[u]_{C^{\si_1}(\bar{B}_{\ep_4 r_0})}\leq C_9 (\la r_0)^{-\si_1}.
\end{equation}
\end{thm}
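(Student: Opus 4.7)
The plan is to combine Theorem \ref{t4} with the verification of the DVP-condition carried out in the paragraphs that precede the statement. Those paragraphs exhibit a distance $d(y_1,y_2)=\la|y_1-y_2|$ dominated by the Riemannian distance (giving (D)), a doubling constant $K_1=(4\mu/\la)^m$, and, via the Euclidean Payne--Weinberger inequality, a Poincar\'e constant $K_2=4\pi^{-2}(\mu/\la)^{m+2}$ valid up to the radius $R_0=\la r_0$. Crucially, $K_1$ and $K_2$ depend only on $m$ and $\mu/\la$, and so does every DVP constant that enters Theorem \ref{t4} through them.

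With the DVP-condition in hand, I would fix a scale $R_1:=R_0/6=\la r_0/6$ and declare a center $y_0$ \emph{admissible} if $B_{3R_1}(y_0)\subset\subset M$. Since $B_R(y)=\Bbb{D}(y,\la^{-1}R)\cap\Bbb{D}(r_0)$, the admissible set $M'$ is essentially a Euclidean disk of radius $r_0/2$ about $0$. For each admissible $y_0$, Theorem \ref{t4} produces constants $\si_1\in(0,1)$, $\de_3\in(0,1)$ and $C_8>0$, depending only on $K_1,K_2$ and the convexity constant $c$ attached to $K$, hence only on $m,\mu/\la$ and $K$, such that
\begin{equation*}
[u]_{C^{\si_1}(\bar B_{\de_3 R_1}(y_0))}\leq C_8 R_1^{-\si_1}=C_8(\la r_0/6)^{-\si_1}.
\end{equation*}

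To promote this family of local estimates to a seminorm on a single ball, I would pick $\ep_4\in(0,1)$ small enough that every $y_0\in\bar B_{\ep_4 r_0}$ is admissible and that $2\ep_4 r_0\leq \de_3 R_1$ covers the short-distance regime below. For an arbitrary pair $y_1,y_2\in\bar B_{\ep_4 r_0}$ I would then split into two cases. If $r(y_1,y_2)\leq\de_3 R_1$, the local bound applied at $y_0=y_1$ yields $\rho(u(y_1),u(y_2))\leq C_8 R_1^{-\si_1}r(y_1,y_2)^{\si_1}$ directly. If instead $r(y_1,y_2)>\de_3 R_1$, I would use the trivial bound $\rho(u(y_1),u(y_2))\leq\mbox{diam}(K)$ and divide by $r(y_1,y_2)^{\si_1}\geq(\de_3 R_1)^{\si_1}$, giving a term of the form $C(\la r_0)^{-\si_1}$. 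Setting $C_9$ to be the larger of the two resulting constants delivers the claimed inequality.

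The only non-routine aspect is bookkeeping: checking at every step that each introduced constant really depends solely on $m,\mu/\la$ and $K$, and in particular neither on $r_0$ nor on $\la,\mu$ individually. Since the DVP constants displayed above have exactly this form and Theorem \ref{t4} is invoked as a black box, no substantial obstacle remains.
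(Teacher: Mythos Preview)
Your proposal is correct and follows the same route as the paper: verify the DVP-condition with the constants $K_1=(4\mu/\la)^m$, $K_2=4\pi^{-2}(\mu/\la)^{m+2}$, $R_0=\la r_0$ displayed just before the statement, and then invoke Theorem~\ref{t4}. The paper simply says ``applying Theorem~\ref{t4} yields'' and stops, so your argument is in fact more detailed than what is written there.

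One small simplification: the patching step (splitting into $r(y_1,y_2)\leq\de_3 R_1$ versus $r(y_1,y_2)>\de_3 R_1$ and covering by many centers) is not needed. A single application of Theorem~\ref{t4} at the center $y_0=0$ with, say, $R_1=\la r_0/4$ already produces a H\"older bound on the whole ball $\bar B_{\de_3 R_1}(0)$, and taking $\ep_4$ so that $\bar B_{\ep_4 r_0}=\bar B_{\de_3 R_1}(0)$ finishes the proof directly. Your covering argument is correct, just more than what the situation requires.
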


{\bf Remark:}
Here the definition of H\"{o}lder seminorm is the same as (\ref{holder}). In many references, e.g. \cite{h-j-w},
the $\si$-H\"{o}lder seminorm is given by
\begin{equation}
[u]_{C^\si(S)}:=\sup_{y_1,y_2\in S,y_1\neq y_2}\f{\rho\big(u(y_1),u(y_2)\big)}{|y_1-y_2|^\si}.
\end{equation}
With that definition, the corresponding estimate would read as
\begin{equation}
[u]_{C^{\si_1}(\bar{B}_{\ep_4 r_0})}\leq C_9  r_0^{-\si_1}.
\end{equation}

If $(M,g)$ is a Riemannian manifold, then every point $y_0\in M$ has a coordinate patch
with induced metric, hence from Theorem \ref{t6} the following estimate immediately follows:

\begin{thm}

Let $(M,g)$ be an  $m$-dimensional Riemannian manifold,
$u\in H_{loc}^{1,2}(M,S^n)$ be a weakly harmonic map, and $K$ be a compact subset of
$S^n\backslash \ol{S}^{n-1}_+$, with the property that  almost every $y\in M$ satisfies $u(y)\in K$. Then for any compact
subset $S$ of $M$, there exist numbers $\si_2\in (0,1)$,
and $C_{10}>0$, depending on $m,K,S$, and on the metric of $M$, but not on $u$, such that the estimate
\begin{equation}
[u]_{C^{\si_2}(S)}\leq C_{10}
\end{equation}
holds. Moreover, if $M$ is homogeneously regular (in the sense of Morrey) with constants $\la$ and $\mu$,
then $\si_2$ depends only on $m,\la,\mu$ and $K$, while $C_{10}$ depends, apart from these parameters, also on
$S$.

\end{thm}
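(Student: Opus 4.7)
The plan is to reduce the statement to Theorem \ref{t6} by a covering argument. Given $y_0 \in M$, choose a coordinate chart $(U_{y_0}, \varphi_{y_0})$ centered at $y_0$ in which $\varphi_{y_0}(U_{y_0})$ contains a Euclidean ball $\Bbb{D}^m(r(y_0))$ and on which the metric components $g_{\a\be}$ satisfy uniform bounds
\begin{equation*}
\la(y_0)^2|\xi|^2 \leq g_{\a\be}\xi^\a\xi^\be \leq \mu(y_0)^2|\xi|^2
\end{equation*}
for positive constants $\la(y_0), \mu(y_0), r(y_0)$. Normal coordinates provide such a chart, and by smoothness of $g$ and compactness of $S$ (together with a slight enlargement $S'\supset\supset S$ chosen so that all chart balls fit inside $M$), the three quantities $\la(y_0),\mu(y_0),r(y_0)$ can be taken as uniform positive constants $\la_0,\mu_0,r_1$ as $y_0$ ranges over $S$.

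Pulling $u$ back to this chart and applying Theorem \ref{t6} with $r_0 = r_1$, $\la = \la_0$, $\mu = \mu_0$, and with the compact set $K\subset S^n\setminus \ol{S}^{n-1}_+$ of the hypothesis, produces a constant $\ep_4 \in (0,1)$, an exponent $\si_1 \in (0,1)$, and a constant $C_9 > 0$ depending only on $m$, $\mu_0/\la_0$ and $K$ such that on the Euclidean ball $\bar{\Bbb{D}}^m(\ep_4 r_1)$ centered at $y_0$ we have
\begin{equation*}
[u]_{C^{\si_1}(\bar B_{\ep_4 r_1}(y_0))} \leq C_9(\la_0 r_1)^{-\si_1}.
\end{equation*}
Next I cover $S$ by finitely many such balls $B_{\ep_4 r_1/2}(y_i)$, $i=1,\dots,N$, where $N$ depends on $S$ and on $r_1$. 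On each such ball the preceding estimate applies with a common constant $C' := C_9(\la_0 r_1)^{-\si_1}$ and exponent $\si_1$.

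The remaining step is to patch these local H\"older bounds into a single H\"older bound on $S$. For any pair $y_1,y_2 \in S$, either both lie in a common ball $B_{\ep_4 r_1/2}(y_i)$, in which case the local estimate immediately gives
\begin{equation*}
\rho(u(y_1),u(y_2)) \leq C'\, r(y_1,y_2)^{\si_1};
\end{equation*}
or else $r(y_1,y_2)$ is bounded below by some $r_* > 0$ coming from the Lebesgue number of the covering, and in that case we use $\rho(u(y_1),u(y_2)) \leq \mbox{diam}(K) \leq \mbox{diam}(K)\, r_*^{-\si_1}\, r(y_1,y_2)^{\si_1}$. Taking $\si_2 := \si_1$ and $C_{10}$ the maximum of the two constants obtained yields the desired global H\"older estimate. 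The dependence is as claimed: $\si_2$ only involves $m$, the ellipticity ratio $\mu_0/\la_0$, and $K$, while $C_{10}$ also absorbs $r_1$, $r_*$, and $\mbox{diam}(K)$, which are controlled by $S$ and the metric of $M$.

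The main obstacle is the bookkeeping for the homogeneous regularity statement: in that case Morrey's condition supplies uniform $\la,\mu$ across all of $M$ via a global family of admissible charts, so the ratio $\mu_0/\la_0$ can be replaced by $\mu/\la$ independently of $S$; this forces $\si_2$ to depend only on $m,\la,\mu,K$. The constant $C_{10}$ still depends on $S$ through the number of charts needed to cover $S$ and the Lebesgue number of that cover, since these govern both $r_1$ and $r_*$ in the argument above. No new analytic input is required beyond Theorem \ref{t6} and the covering/patching trick; the delicate point is simply to verify that the constants produced by the local estimate can be made uniform in $y_0 \in S$, which follows from the continuity of $g$ and the compactness of $S$ (or from Morrey's hypothesis in the second assertion).
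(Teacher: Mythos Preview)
Your proof is correct and follows essentially the same approach as the paper, which simply asserts that the result ``immediately follows'' from Theorem \ref{t6} since every point of $M$ lies in a coordinate patch. You have supplied the details the paper omits: the uniformization of the chart constants over the compact set $S$, the application of Theorem \ref{t6} in each chart, and the Lebesgue-number patching argument to pass from local to global H\"older bounds.
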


We use here a slightly altered definition for \textbf{homogenously regular} manifolds (cf. \cite{mo}, p.363):
A $C^1$-manifold $M$ is said to be homogenously regular if there exist positive numbers $\la$ and $\mu$,
such that each point $y_0$ of $M$ is the center of a coordinate patch $\{z:|z|\leq 1\}$ for which
$$\la^2 |\xi|^2\leq g_{\a\be}(z)\xi^\a\xi^\be\leq \mu^2|\xi|^2$$
holds for all $\xi\in \R^m$ and each $|z|\leq 1$.

Recall that a Riemannian manifold $M$ is said to be \textbf{simple},
if it is described by a single set of coordinates $y$
on $\R^m$ and by a metric
$$g=g_{\a\be}(y)dy^\a dy^\be$$
for which there exist positive numbers $\la$ and $\mu$ such that
$$\la^2 |\xi|^2\leq g_{\a\be}(y)\xi^\a\xi^\be\leq \mu^2|\xi|^2$$
holds for all $\xi,y\in \R^m$.

Applying Theorem \ref{t7} yields the following Liouville-type theorem:

\begin{thm}

Let $u$ be a weakly harmonic map from a simple Riemannian
manifold $M$ to $S^n$. If $u(M)$ is contained in a compact subset
of $S^n\backslash \ol{S}^{n-1}_+$, then $u$ has to be a constant
map.
\end{thm}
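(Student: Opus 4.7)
The plan is to reduce this to Theorem \ref{t7} by verifying that any simple Riemannian manifold $M$ satisfies the DVP-condition with $R_0=+\infty$. Once this is established, the hypothesis that $u(M)$ lies in a compact subset $K$ of $\Bbb{V}=S^n\setminus\ol{S}^{n-1}_+$ together with Theorem \ref{t7} forces $u$ to be constant.

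First I would verify condition (D). Following the construction in the discussion preceding Theorem \ref{t6}, I define the auxiliary distance function $d(y_1,y_2):=\la|y_1-y_2|$ on $M=\R^m$. The uniform ellipticity bound $\la^2|\xi|^2\leq g_{\a\be}\xi^\a\xi^\be\leq \mu^2|\xi|^2$ gives $d(y_1,y_2)\leq r(y_1,y_2)\leq \mu|y_1-y_2|$, so the metric topology of $d$ coincides with the Riemannian topology, and $d\leq r$. The upper bound $r\leq\mu|y_1-y_2|$ further shows that $(M,g)$ is a complete Riemannian manifold, since Cauchy sequences in $r$ are Cauchy in the Euclidean metric and thus converge.

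Next I would check conditions (V) and (P). Because the Euclidean balls $\Bbb{D}(y,\la^{-1}R)$ are all of $\R^m$ (never truncated by any $\Bbb{D}(r_0)$), the same computation as in Theorem \ref{t6} goes through but now for every $R>0$, giving
\[
2^{-m}\om_m R^m\leq V(y,R)\leq \big(\tfrac{2\mu}{\la}\big)^m R^m\om_m\qquad\text{for all }R>0,
\]
which yields the doubling property with $K_1=(4\mu/\la)^m$ and $R_0=+\infty$. Similarly, the Euclidean Poincar\'e inequality of \cite{p-w} applied on $\Bbb{D}(y,\la^{-1}R)$ (which is precisely $B_R(y)$ and is automatically relatively compact in $M=\R^m$ for every finite $R$) combined with the comparison $\mu^{-m}dy\leq *1\leq\mu^m dy$ of volume forms and the inequalities between $|Dv|$ and $|\n v|$ coming from uniform ellipticity yields the Neumann--Poincar\'e inequality with $K_2=4\pi^{-2}(\mu/\la)^{m+2}$, valid for all $R>0$.

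The only real content is therefore the observation that the global (rather than local) nature of the bounds in the definition of ``simple'' lets one take $R_0=+\infty$; no step is a serious obstacle, since the passage from $\Bbb{D}^m(r_0)$ to $\R^m$ is only a matter of noting $r_0=+\infty$ throughout the argument of Theorem \ref{t6}. With (D), (V), (P), completeness, and $R_0=+\infty$ in hand, Theorem \ref{t7} applies directly to $u$ and yields that $u$ is constant, completing the proof.
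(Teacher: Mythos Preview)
Your proposal is correct and follows essentially the same route as the paper: the paper establishes in Section~6.1 that a simple manifold (being the $r_0=\infty$ case of the disk computation) satisfies the DVP-condition with $R_0=+\infty$ and constants $K_1=(4\mu/\la)^m$, $K_2=4\pi^{-2}(\mu/\la)^{m+2}$, and then simply invokes Theorem~\ref{t7}. One small slip: the volume-form comparison should read $\la^m\,dy\leq *1\leq\mu^m\,dy$, not $\mu^{-m}dy\leq *1$; this does not affect your stated constants, which are the correct ones.
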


{\bf Remark:}
Let  $M$ be an entire graph given by $f:\ir{n}\to\ir{}.$ If $|\n
f|\le\be<\infty$, then the induced metric on $M$ is simple. Furthermore, the image under its Gauss map lies in a
closed subset of an open hemisphere. This is the situation of \cite{M}.

For higher codimensional graphs with suitable bounded slope of
defining functions we also obtain simple manifolds with convex
Gauss image in Grassmannian manifolds. This is the situation of
\cite{h-j-w}.

\medskip
\subsection{Manifolds with nonnegative Ricci curvature}

Let $(M,g)$ be a Riemannian manifold with $\mbox{Ric}\ M\geq 0$. For
 the canonical distance function  $d(\cdot,\cdot)$ induced by $g$,
 Condition (D) is obviously satisfied. By the classical relative volume comparison theorem, $M$ enjoys the doubling property with
constant $K_1=2^m$. In \cite{bu}, P. Buser shows that $M$ satisfies the Neumann-Poincar\'{e} inequality
with constant $K_2=K_2(m)$. Therefore, Theorem \ref{t7} yields
the following Liouville-type theorem:

\begin{thm}

Let $u$ be a weakly harmonic map from a Riemannian manifold $M$
with nonnegative Ricci curvature to $S^n$. If $u(M)$ is contained
in a compact subset of $S^n\backslash \ol{S}^{n-1}_+$, then $u$
has to be a constant map.

\end{thm}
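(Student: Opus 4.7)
The plan is to reduce the statement to Theorem \ref{t7} by verifying that a complete Riemannian manifold $(M,g)$ with $\mathrm{Ric}\,M \geq 0$ satisfies the DVP-condition with $R_0 = +\infty$. Once this is done, the hypothesis that $u(M)$ lies in a compact subset of $\Bbb{V} = S^n\setminus \ol{S}^{n-1}_+$ is precisely the hypothesis of Theorem \ref{t7}, and the conclusion follows immediately.

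First I would dispose of condition (D) by taking $d$ to be the Riemannian distance function $r$ itself; then $d(y_1,y_2) = r(y_1,y_2)$ trivially, the induced topologies agree, and the inequality $d \leq r$ is an equality. Next, for condition (V), I would invoke the Bishop--Gromov relative volume comparison theorem: under $\mathrm{Ric}\,M \geq 0$, the function $R \mapsto V(y,R)/R^m$ is non-increasing on $(0,\infty)$, and comparison with Euclidean balls gives $V(y,2R) \leq 2^m V(y,R)$ for every $y\in M$ and every $R>0$. This verifies the doubling property with $K_1 = 2^m$ and $R_0 = +\infty$.

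The heart of the verification is condition (P), for which I would cite Buser's Neumann--Poincar\'e inequality \cite{bu}: on any geodesic ball $B_R(y)$ in a manifold with nonnegative Ricci curvature,
\begin{equation*}
\int_{B_R(y)} |v - \bar v_{B_R(y)}|^2 *1 \leq K_2(m)\, R^2 \int_{B_R(y)} |\n v|^2 *1,
\end{equation*}
where $K_2$ depends only on the dimension $m$. This holds for all $R>0$, so condition (P) is satisfied with $R_0 = +\infty$ as well.

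With all three conditions verified, Theorem \ref{t7} applies directly and forces $u$ to be constant. The main (and essentially the only) substantive step is the appeal to Buser's Poincar\'e inequality; the other two conditions are either tautological or a direct consequence of the standard volume comparison theorem. Nothing new needs to be proved here: the whole content of the theorem is packaging the earlier abstract Liouville result with the known geometric inputs for $\mathrm{Ric}\geq 0$ manifolds.
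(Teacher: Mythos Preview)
Your proposal is correct and follows exactly the paper's own argument: verify (D) with $d=r$, (V) via Bishop--Gromov with $K_1=2^m$, and (P) via Buser's inequality, then invoke Theorem~\ref{t7}. There is nothing to add.
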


\medskip
\subsection{Bernstein type theorems}

Let $M^m\subset \R^{m+1}$ be a complete  hypersurface with the
induced Riemannian metric. Then we can define $d: M\times M\ra \R$
\begin{equation}
(y_1,y_2)\ra |y_1-y_2|
\end{equation}
where $|y_1-y_2|$ denotes the Euclidean distance from $y_1$ to
$y_2$. Obviously $d(y_1,y_2)\leq r(y_1,y_2)$. Therefore $M$
satisfied Condition (D) if and only if the inclusion map $i: M\ra
\R^{m+1}$ is injective, i.e. $M$ is an imbedded hypersurface.

Given $y\in M$ and $R>0$, the density is defined by
\begin{equation}
\Th(y,R)=\f{V(y,R)}{\om_m R^m}.
\end{equation}
Here $\om_m$ is the volume of $S^m\subset \R^{m+1}$ equipped with the canonical metric.
The following monotonicity  of the volume is well-known.

\begin{lem}
If $M^m$ is any complete minimal submanifold in Euclidean space, then
$\Th(y,R)$ is monotonically
nondecreasing in $r$ and $\lim_{R\ra 0}\Th(y,R)=1$.
\end{lem}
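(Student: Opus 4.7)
The plan is to derive the classical monotonicity formula from the first variation of area for minimal submanifolds. Fix $y\in M$ and consider the position vector field $X(z):=z-y$ on $\R^{m+1}$, restricted to $M$. Decomposing $X=X^T+X^\perp$ into components tangent and normal to $M$, a standard computation (using $\bn X=\mbox{Id}$ together with the Weingarten formula) gives $\mbox{div}_M X^T = m + \lan H, X^\perp\ran$, where $H$ is the mean curvature vector. Minimality forces $H\equiv 0$, so $\mbox{div}_M X^T\equiv m$ along $M$.

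Applying the divergence theorem to $M\cap B_R(y)$ and noting that the outward unit conormal $\nu$ is tangent to $M$, I obtain
\begin{equation*}
m\, V(y,R) = \int_{\partial B_R(y)\cap M}\lan X,\nu\ran\, dA.
\end{equation*}
Writing $r(z):=|z-y|$, one has $\n^M r = X^T/r$, hence $\nu = \n^M r/|\n^M r|$ and $\lan X,\nu\ran = |X^T| = R|\n^M r|$ on the sphere $\partial B_R(y)$. Combining this with the coarea identity $V'(y,R)=\int_{\partial B_R(y)\cap M}|\n^M r|^{-1}\, dA$ yields
\begin{equation*}
\frac{d}{dR}\Bigl(\frac{V(y,R)}{R^m}\Bigr) = \frac{1}{R^m}\int_{\partial B_R(y)\cap M}\frac{1-|\n^M r|^2}{|\n^M r|}\, dA.
\end{equation*}
Since $|\n^M r|\le |\n r| = 1$, the right-hand side is non-negative, which establishes monotonicity of $\Th(y,R) = V(y,R)/(\om_m R^m)$.

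For the limit $R\ra 0$, I would exploit the smoothness of $M$ at $y$: after a rigid motion of $\R^{m+1}$, $M$ is locally the graph of a smooth map $f:T_yM\ra (T_yM)^\perp$ with $f(0)=0$ and $df(0)=0$. Consequently $M\cap B_R(y)$ differs from the flat Euclidean $m$-disk of radius $R$ in $T_yM$ by an error of order $R^{m+2}$, so $V(y,R) = \om_m R^m(1+o(1))$ as $R\ra 0$, giving $\Th(y,R)\ra 1$.

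The one technical hurdle I anticipate is justifying the divergence and coarea steps on the set $M\cap B_R(y)$, whose boundary need not be smooth for every $R$. By Sard's theorem $\partial B_R(y)\cap M$ is a smooth hypersurface of $M$ for almost every $R$, which suffices for the differential identity; monotonicity of the continuous quantity $V(y,\cdot)/R^m$ then extends to every $R>0$. The mild singularity of $r$ at the center point is handled by the standard approximation $r_\ep := \sqrt{r^2+\ep^2}$ with $\ep\ra 0$, the extra boundary term vanishing in the limit.
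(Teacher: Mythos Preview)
Your argument is the standard derivation of the monotonicity formula and is correct. The paper itself does not prove this lemma at all; it merely states it as ``well-known'' and moves on, so there is nothing to compare against beyond noting that you have supplied a complete (and classical) proof where the paper offers none.

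One small remark: the lemma as stated covers minimal submanifolds of arbitrary codimension, while you write the ambient space as $\R^{m+1}$. Your computation is unaffected---the identities $\bn X=\mbox{Id}$, $\mbox{div}_M X^T=m+\lan H,X^\perp\ran$, and the coarea step hold verbatim in $\R^{m+p}$ for any $p\ge 1$---so this is only a notational slip, not a gap.
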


We say that a minimal hypersurface $M$ has Euclidean volume growth if there exist $y_0\in M$
and a positive constant $C$, such that
\begin{equation}
\Th(y_0,R)\leq C
\end{equation}
for arbitrary $R>0$. For each $y\in M$, if we denote $r=d(y,y_0)$, then
$$\Th(y,R)=\f{V(y,R)}{\om_m R^m}\leq \f{V(y_0,R+r)}{\om_m R^m}=\big(\f{R+r}{R}\big)^m \Th(y_0,R+r)$$
Letting $R\ra +\infty$ implies
$$\lim_{R\ra +\infty}\Th(y,R)\leq \lim_{R\ra +\infty}\Th(y_0,R)=C.$$
And moreover
\begin{equation}
\f{V(y,2R)}{V(y,R)}=2^m \f{\Th(y,2R)}{\Th(y,R)}\leq 2^m C.
\end{equation}
i.e. $M$ satisfies Condition (V) with $R_0=+\infty$.

The  Gauss map $\gamma : M \to S^m $ is defined by
$$
 \g (y) = T_y M \in S^m
$$
via the parallel translation in $ \ir{m+1} $ for all $ y \in M $.
Ruh-Vilms \cite{r-v} proved  that the mean curvature vector of
$M$ is parallel if and only if its Gauss map is a harmonic map.
This fact enables us to apply Theorem \ref{t7} and obtain a
Bernstein type theorem as follows.

\begin{thm}\label{b1}

Let $M^m\subset \R^{m+1}$ be a complete minimal embedded
hypersurface. Assume $M$ has Euclidean volume growth, and there
is a positive constant $C$, such that for arbitrary $y\in M$ and
$R>0$, the Neumann-Poincar\'{e} inequality
$$\int_{B_{ R}( y)}|v-\bar{v}_{B_{ R}(y)}|^2*1\leq
CR^2\int_{B_R(y)}|\n v|^2 *1$$
holds for all $v\in C^\infty(B_R(y))$, where
$$B_R(y)=\{z\in M: |z-y|<R\}.$$
If the image under the Gauss map omits a neighborhood of $\ol{S}^{n-1}_+$,  then
$M$ has to be an affine linear space.

\end{thm}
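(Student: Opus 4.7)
The plan is to reduce the Bernstein statement to the Liouville theorem \ref{t7} applied to the Gauss map. First I would verify that under the stated hypotheses, $M$ satisfies the DVP-condition with $R_0=+\infty$. For condition (D), I would take $d(y_1,y_2)=|y_1-y_2|$ as the extrinsic Euclidean distance; since $M$ is embedded, this is a genuine distance function on $M$, and the inequality $d(y_1,y_2)\leq r(y_1,y_2)$ against the intrinsic distance is automatic. The fact that the induced topology agrees with the manifold topology follows from embeddedness together with completeness. Condition (V) with $R_0=+\infty$ is exactly the computation carried out just before the theorem: Euclidean volume growth, combined with the monotonicity of $\Theta(\cdot,R)$ for minimal submanifolds, yields $V(y,2R)\leq 2^m C\, V(y,R)$ uniformly in $y$ and $R$. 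Condition (P) is assumed directly in the hypothesis.

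Next, by the Ruh--Vilms theorem the minimality of $M$ is equivalent to harmonicity of the Gauss map $\gamma\colon M\to S^m$. The assumption that $\gamma(M)$ avoids a neighborhood of the closed half-equator $\overline{S}^{m-1}_+$ means precisely that $\gamma(M)$ is contained in a compact set $K\subset \mathbb{V}=S^m\setminus \overline{S}^{m-1}_+$. At this point Theorem \ref{t7} applies verbatim (with $n$ there replaced by $m$): the weakly harmonic map $\gamma$, with image in a compact subset of the maximal convex supporting domain $\mathbb{V}$ and domain satisfying DVP with $R_0=+\infty$, must be constant.

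Finally, a complete connected hypersurface in $\mathbb{R}^{m+1}$ whose Gauss map is constant has every tangent space parallel to a fixed $m$-plane; integrating, $M$ must lie in an affine translate of that plane, and completeness together with $\dim M=m$ forces $M$ to coincide with the entire affine hyperplane. This concludes the argument.

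The one genuine thing to be careful about is verifying the DVP conditions rigorously with $R_0=+\infty$: the volume doubling step hinges on using Euclidean volume growth at \emph{every} point, not just at the distinguished base point $y_0$, and the comparison $\Theta(y,R)\leq ((R+r)/R)^m\,\Theta(y_0,R+r)\to C$ as $R\to\infty$ already appears in the text, so no new work is needed. Beyond this bookkeeping, the proof is a direct concatenation of Ruh--Vilms with Theorem \ref{t7}; the deep analytic content—constructing strictly convex functions on compact subsets of $\mathbb{V}$ and pushing the Green-function/telescoping machinery through without an energy bound—has already been packaged into Theorem \ref{t7}, so no additional obstacle arises here.
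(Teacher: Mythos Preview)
Your proposal is correct and follows essentially the same route as the paper: the text preceding the theorem verifies that an embedded minimal hypersurface with Euclidean volume growth and the assumed Poincar\'e inequality satisfies the DVP-condition with $R_0=+\infty$, and then invokes Ruh--Vilms together with Theorem~\ref{t7} to force the Gauss map to be constant. Your additional remark that a constant Gauss map forces $M$ to be an affine hyperplane is left implicit in the paper but is of course the standard concluding step.
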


This naturally raises the question under which conditions a complete
embedded minimal hypersurfaces in Euclidean space satisfies the
Neumann-Poincar\'e inequality. So far, only partial results in this direction
seem to be known. When $M$ is an {\bf area-minimizing} hypersurface,
the Neumann-Poincar\'{e} inequality has been proved by Bombieri-Giusti (see \cite{b-g}). Hence we have:

\begin{thm}\label{b2}
Let $M^m\subset \R^{m+1}$ be a complete embedded area-minimizing
hypersurface. Assume $M$ has Euclidean volume growth. If the
image under the Gauss map omits a neighborhood of
$\ol{S}^{n-1}_+$, then $M$ has to be an affine linear space.
\end{thm}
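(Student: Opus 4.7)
The plan is to reduce Theorem \ref{b2} to Theorem \ref{b1}, which in turn rests on the Liouville theorem (Theorem \ref{t7}) applied to the Gauss map. Thus, the task boils down to verifying the DVP-condition for an area-minimizing embedded hypersurface of Euclidean volume growth, since all the geometric machinery (Ruh--Vilms harmonicity of the Gauss map, the convex supporting structure on $\Bbb{V} = S^n \setminus \ol{S}^{n-1}_+$, and the interior H\"older estimates) has already been established.

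First I would record that Condition (D) holds automatically for an \emph{embedded} hypersurface $M \subset \R^{m+1}$ with the extrinsic chord distance $d(y_1,y_2) = |y_1-y_2|$: embeddedness makes $d$ a genuine distance, $d \leq r$ is immediate, and the induced topology coincides with the Riemannian one. Second, Condition (V) with $R_0 = +\infty$ is precisely what was derived in the paragraphs preceding Theorem \ref{b1}, using monotonicity of the density $\Theta(y,R)$ together with the Euclidean volume growth assumption at one base point; the doubling constant can be taken as $K_1 = 2^m C$. Third, Condition (P) is the content of the Bombieri--Giusti Poincar\'e inequality \cite{b-g} for area-minimizing hypersurfaces, which yields a uniform constant $K_2$ valid for all intrinsic balls $B_R(y) \subset\subset M$. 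Hence $M$ satisfies the full DVP-condition with $R_0 = +\infty$.

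Next I would invoke the Ruh--Vilms theorem (as recalled in the text before Theorem \ref{b1}): since $M$ is minimal, its Gauss map $\gamma: M \to S^m$ is a (weakly, in fact smoothly) harmonic map. By assumption, $\gamma(M)$ is contained in a compact subset $K$ of $\Bbb{V} = S^m \setminus \ol{S}^{m-1}_+$. Theorem \ref{t7} then applies verbatim and forces $\gamma$ to be constant. Finally, a hypersurface in $\R^{m+1}$ whose tangent plane (equivalently, unit normal) is constant must be contained in an affine hyperplane; by completeness and connectedness $M$ is a full affine $m$-plane.

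The only nontrivial input is the Neumann--Poincar\'e inequality for area-minimizing hypersurfaces, which is borrowed from \cite{b-g}; aside from citing this, the argument is a direct assembly of earlier results and no new estimates are required.
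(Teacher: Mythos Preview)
Your proposal is correct and follows exactly the paper's approach: Theorem \ref{b2} is deduced from Theorem \ref{b1} by invoking the Bombieri--Giusti Neumann--Poincar\'e inequality \cite{b-g} for area-minimizing hypersurfaces to supply Condition (P), while (D) and (V) were already verified in the text preceding Theorem \ref{b1}. One small terminological point: the balls $B_R(y)$ in the DVP framework here are the \emph{extrinsic} balls $\{z\in M:|z-y|<R\}$, not intrinsic geodesic balls, and this is the setting in which \cite{b-g} provides the Poincar\'e inequality; otherwise your write-up is an accurate (and more detailed) unpacking of the paper's one-line deduction.
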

\noindent
{\bf Remarks:}
\begin{itemize}
\item
Solomon \cite{so1} proved such a result under a somewhat weaker
condition on the Gauss image, but needed the additional topological
assumption that the first Betti number of $M$ vanishes. Presumably,
Solomon's result ceases to be true without that topological
assumption.
\item
Recently, N.Wickramasekera\cite{wick} proved new Poincar\'e type inequalities for
stable minimal hypersurfaces of dimension at most 6 in Euclidean space of controlled volume
growth. The dimensional restriction in his results is related to the
fact that in higher dimensions, stable minimal hypersurfaces may have
singularities. 

\end{itemize}

\bigskip\bigskip

\bibliographystyle{amsplain}

\end{document}